\DeclarePairedDelimiter\ceil{\lceil}{\rceil}
\DeclarePairedDelimiter\floor{\lfloor}{\rfloor}
\newtheorem{definition}{Definition}[section]
\newtheorem{theorem}{Theorem}[section]
\newtheorem{lemma}{Lemma}[section]
\newtheorem{proposition}{Proposition}[section]
\newtheorem{remark}{Remark}[section]
\newtheorem{corollary}{Corollary}[section]
\newtheorem{example}{Example}[section]
\newcommand{\om}{\textnormal{\textbf{o}}}
\newcommand{\som}[2]{\overset{#2}{\underset{i=#1} {\textnormal{\large\textbf{O}}}    }}
\newcommand{\somj}[2]{\overset{#2}{\underset{j=#1} {\textnormal{\large\textbf{O}}}    }}
\newcommand {\im}{ \mathop {\rm Im} \nolimits }
\begin{document}

\title{Fixed points of generalized contractions on O-metric spaces}

\date{}

\author[1]{\small Hallowed Oluwadara Olaoluwa}
\author[2]{\small Aminat Olawunmi Ige}
\author[3]{\small Johnson Olajire Olaleru}

\affil[1]{\small Corresponding author}

\affil[1]{\small email address:  holaoluwa@unilag.edu.ng}
\affil[2]{\small email address: aminat.ige@lasu.edu.ng}

\affil[3]{\small email address: jolaleru@unilag.edu.ng}

\affil[1,3]{\small Department of Mathematics, University of Lagos, Nigeria}
\affil[2]{\small Department of Mathematics, Lagos State University, Nigeria}

\maketitle

\begin{abstract}
\noindent
	The class of O-metric spaces generalize several existing metric-types in literature including metric spaces, b-metric spaces, and ultra metric spaces. 
In this paper, we discuss the properties of the topology induced by an O-metric and establish in the setting, fixed point theorems of contractions and generalized contractions. The proofs of the theorems
rely heavily on polygon $\om$-inequalities which are a natural generalization of the triangle inequality, and the construction of which leads to the notion of $\om$-series following a pattern of functions.
\vspace{5mm}

\noindent{\bf AMS Subject Classification (2010):} Primary 54E99; Secondary 54E350.
\vspace{5mm}
	
	\noindent{\bf Keywords:} O-metrics; upward metrics; polygon inequalities; generalized series; contractive sequences. 
	
\end{abstract}


\section{Introduction}
The class of O-metric spaces was recently introduced in \cite{ourpaper} as a robust unification and generalization of metric spaces and metric-types such b-metric spaces \cite{Bakhtin1989,Czerwik1993}, ultra metric spaces \cite{ultra}, multiplicative metric spaces \cite{mult}, b-multiplicative metric spaces \cite{mgraph}, $\theta$-metric spaces \cite{thetamet}, $p$-metric spaces \cite{pbest}, and several extended b-metric spaces \cite{include1} and controlled metric type spaces \cite{discover}. In O-metric spaces, self-distance is set at a nonnegative real number, and the triangle inequality is amended to accommodate many binary operations on the set $\mathbb{R}$ of real numbers. In this section, we present some key definitions and results presented in \cite{ourpaper}.
\\
\\
Throughout this article, $\mathbb{N}$ denotes the set $\{1,2,3,\ldots\}$ of natural numbers, $\mathbb{N}_0$ is the set of non-negative integers, $\mathbb{R}$ denotes the set of all real numbers, $\mathbb{R}_+$ denotes the interval $[0,\infty)$ of real
numbers, $a$ is a non-negative real number,  $I_a$ is an interval in $\mathbb{R}_+$ containing $a$ and  $\mathnormal{\mathbf{o}} :I_a \times I_a \to \mathbb{R}_+$ is a function whose values $\mathnormal{\mathbf{o}}(u,v)$ are also denoted $u~ \mathnormal{\mathbf{o}}~v$, where $u,v \in I_a$. Moreover, $X$ denotes a non-empty set, and $d_{\mathnormal{\mathbf{o}}}$ a function defined for all pairs of elements of $X$, whose values are in the interval $I_a$. The floor function and the ceiling function are denoted by $\floor*{.}$ and $\ceil*{.}$ respectively: for any $x \in \mathbb{R}$, $\floor*{x}:=\sup\{n \in \mathbb{Z}: ~ n \leq x\}$ and $\ceil*{x}:=\inf\{n \in \mathbb{Z}: ~ n \geq x\}$, where $\mathbb{Z}$ is the set of all integers.


\begin{definition}
\label{psimetric} Let $X$ be a nonempty set. A function $d_\mathnormal{\mathbf{o}}: X \times X \to I_a$ is said to be an O-metric (or more specifically, an $\mathnormal{\mathbf{o}}$-metric) on $X$ and $(X,d_\mathnormal{\mathbf{o}},a)$ an O-metric space (or $\mathnormal{\mathbf{o}}$-metric space), if for all $x,y,z \in X$, the following conditions hold:
\begin{itemize}
\item[(i)] $d_\mathnormal{\mathbf{o}}(x,y)=a$ if and only if $x=y$;
\item[(ii)] $d_\mathnormal{\mathbf{o}}(x,y)=d_\mathnormal{\mathbf{o}}(y,x)$;
\item[(iii)] $d_\mathnormal{\mathbf{o}}(x,z) \leq d_\mathnormal{\mathbf{o}}(x,y) ~\mathnormal{\mathbf{o}} ~d_%
\mathnormal{\mathbf{o}}(y,z)$ (triangle $\mathnormal{\mathbf{o}}$-inequality). 
\end{itemize}
In particular, if $I_a \subset [a,\infty)$, $X$ is said to be an $a$-upward (or upward) O-metric space, and if $I_a \subset
[0,a]$, $X$ is said to be an $a$-downward (or downward) O-metric space.\footnote{The term ``$a$-upward O-metric" (``$a$-downward O-metric") refers to O-metrics for which the values are greater (less) than or equal to $a$, the value at pairs with same elements.}
\end{definition}
\noindent
Many metric types in literature are $0$-upward or $1$-upward O-metric spaces:

\begin{example}\label{referencemet}
\textup{
An upward O-metric space $(X,d_\mathnormal{\mathbf{o}},a)$ is:
\begin{itemize}
\item[(i)] a metric space $(X,d)$ when $a=0$, $\om(u,v)=u+v$ for all $u,v \geq 0$, and $d=d_{\mathnormal{\mathbf{o}}}$;
\item[(ii)] a b-metric space $(X,d,s)$ when $a=0$, $\mathnormal{\mathbf{o}}(u,v)=s(u+v)$ for all $u,v \geq 0$, with $s \geq 1$, and $d=d_{\mathnormal{\mathbf{o}}}$;
\item[(iii)] a multiplicative metric space $(X,d_\times)$ when $a=1$, $\mathnormal{\mathbf{o}}(u,v)=uv$ for all $u,v \geq 1$, and $d_\times=d_{\mathnormal{\mathbf{o}}}$;
\item[(iv)] a b-multiplicative metric space $(X,d_\times,s)$ when $a=1$, $\mathnormal{\mathbf{o}}(u,v)=(uv)^s$ for all $u,v \geq 1$ and for some $s \geq 1$, and $d_\times=d_{\mathnormal{\mathbf{o}}}$; 
\item[(v)] an ultra metric space $(X,d_\wedge)$ when $a=0$, and $\om(u,v)=\max\{u,v\}$ for all $u,v \geq 0$, and $d_\wedge=d_{\mathnormal{\mathbf{o}}}$;
\item[(vi)] a $\theta$-metric space when $a=0$, and $\mathnormal{\mathbf{o}}$ is a symmetric and continuous function $\theta$, strictly increasing in each variable, with $\theta(0,0) = 0$ and $\theta(s,0) \leq s$ for all $s>0$, and for each $r \in \im(\theta)$ and $s \in [0,r],$ there exists $t \in [0,r]$ such that $\theta(t,s) = r$;
\item[(vi)] a p-metric space $(X,\tilde{d})$ if $a=0$, $\tilde{d}=d_{\mathnormal{\mathbf{o}}}$ and $\mathnormal{\mathbf{o}}(u,v)=\Omega(u+v)$ for all $u,v \geq 0$, where $\Omega:[0,\infty) \to [0,\infty)$ is a strictly increasing continuous function with $t \leq \Omega(t)$ for all $t \geq 0$.
\end{itemize}
}
\end{example}

\noindent The following are examples of O-metrics which are new in the existing 
literature on distance spaces:

\begin{example}
\label{example2} 
\textup{
The set of real numbers $\mathbb{R}$ with $d_\mathnormal{\mathbf{o}}(x,y)=\ln(1+|x-y|)$ for all $x,y \in X$, is a $0$-upward $\mathnormal{\mathbf{o}}$-metric space
where $\mathnormal{\mathbf{o}}(u,v)= (u+1)(v+1)$ for all $u,v \in [0, \infty)$. 
}
\end{example}

\begin{example} 
\label{genl}
\textup{
Define $g(t)=a+|t-a|$ for $t \in \mathbb{R}$. Let $a \in [0,\infty)$ be a real number, $I_a=[a,\infty)$, and let $\mathnormal{\mathbf{o}}$ be increasing in each of its variables with  $\mathnormal{\mathbf{o}}(a,a)=a$, $u \leq \mathnormal{\mathbf{o}}(u,a)$ and $\mathnormal{\mathbf{o}}(u,v)=\mathnormal{\mathbf{o}}(v,u)$ for all $u,v \in I_a$. Then $(\mathbb{R},d_{\mathnormal{\mathbf{o}}}, a)$ is an $\mathnormal{\mathbf{o}}$-metric space, where for any $x,y \in \mathbb{R}$, $d_{\mathnormal{\mathbf{o}}}(x,y)=g(x) ~\mathnormal{\mathbf{o}}~g(y)$.
}
\end{example}
\noindent
The concept of $a$-downward O-metrics is not common. One of the usefulness of the concept is to accommodate binary operations that are not commutative (or symmetric) such as division. In addition, downward O-metrics can be seen in real-life applications when considering weights of pairs other than distance types. This is demonstrated in the example below:
\begin{example}
\label{example1}
\textup{
Consider the set $\mathcal{P}$ of friends $\{p_i\}_{i \in \mathbb{Z}}$ and $L(p_i,p_j)$ the degree (or grade) of love between $p_i$ and $p_j$, ranging from $0$ (absolute hate) to $1$ (absolute love). It is reasonable to set $L(p_i,p_j)=1$ if and only if $i=j$, assuming that love is at its highest only when it is for oneself. If we set $L(p_i,p_j)=e^{-|i-j|}$ for any $i,j \in \mathbb{Z}$, $L$ is a $1$-downward $\om$-metric, where $\om(u,v)=\frac{u}{v}$ for $u,v \in I_1:=(0,1]$. 
}
\end{example}
\noindent
An O-metric space $(X,d_\om,a)$ may be neither $a$-upward nor $a$-downward:
\begin{example}
\label{example3} 
\textup{
Let $X=\mathbb{R}$. Let $I_1$ be the interval $(0,\infty)$, $\mathnormal{\mathbf{o}}:I_1 \times I_1 \to I_1$ the function defined
by: 
\begin{equation*}
\mathnormal{\mathbf{o}}(u,v)=\left\{ 
\begin{array}{lll}
\max\left\{\dfrac{u}{v},-\ln(uv)\right\} & \mbox{ if } ~ 0<u,v \leq 1; &  \\ 
\max\left\{ue^v,-\ln u+v\right\} & \mbox{ if } ~ 0<u\leq 1<v; &  \\ 
\max\left\{\dfrac{e^{-u}}{v}, u-\ln v\right\} & \mbox{ if } ~ 0<v \leq 1<u;
&  \\ 
\max\left\{e^{-u+v}, u+v \right\} & \mbox{ if } ~ u,v >1; & 
\end{array}%
\right.
\end{equation*}
and $d_\om:X \times X \to (0,\infty)$ the map defined by: 
\begin{equation*}
d_\om(x,y)=\left\{ 
\begin{array}{lll}
e^{-|x-y|} & \mbox{ if } |x-y| \leq 1; &  \\ 
|x-y| & \mbox{ if } |x-y| > 1. & 
\end{array}
\right.
\end{equation*}
Then $(X,d_\om,1)$ is an $\om$-metric space that is neither a $1$-upward O-metric space nor a $1$-downward O-metric space. 
}
\end{example}
\noindent
In the next section, we discuss the topology induced by an O-metric and some of its properties.

\section{Topology on O-metric spaces}

An O-metric space $(X,d_{\mathnormal{\mathbf{o}}},a)$ is a topological space with the \emph{topology induced by the O-metric} (or \emph{O-metric topology}) defined as:
\begin{equation}
\mathcal{T}_{d_{\mathnormal{\mathbf{o}}}}=\left\{A \subset X: ~ \forall x \in A ~\exists r>0,~  B_{d_\om}(x,r) \subset A \right\},
 \label{topology}
\end{equation}
where 
\begin{equation}
B_{d_\om}(x,r)=\{y \in X: ~ |d_\om(x,y)-a|<r\}
\end{equation}
is the \emph{open ball} centered on $x \in X$ and with radius $r>0$. 
\\
\\
The terminology ``open ball" is employed for consistency with literature on metric-types only: an ``open ball" may not be an open set for $\mathcal{T}_{d_\om}$.  To see this, we refer to the example constructed in 
\cite{openball} of an open ball which is not an open set in a b-metric space which is a special kind of O-metric space. 

\begin{example}[see \cite{openball}]
\textup{
Let $X=\{0\} \cup \left\{\frac{1}{n}: ~n \in \mathbb{N} \right\}$ and
\begin{equation*}
d(x,y)=\left\{
\begin{array}{llll}
0 & \mbox{ if } x=y
\\
1 & \mbox{ if } x \neq y \in \{0, 1\}
\\
|x-y| & \mbox{ if } x \neq y \in \{0\} \cup \left\{ \frac{1}{2n}: ~ n \in \mathbb{N}\right\}
\\
4 & \mbox{ otherwise.}
\end{array}
\right.
\end{equation*}
Then $d$ is a b-metric on $X$ with $s=4$ hence an $\om$-metric space with $\om(u,v)=4(u+v)$ for all $u, v \in [0, \infty)$. The open ball $B_d(1,2)$ is not open in $\mathcal{T}_{d}$. In fact, the O-metric topology cannot be induced by a metric (for example, the singleton $\{1\}$ is an open set).
}
\end{example}
\noindent
The following proposition holds:
\begin{proposition}\label{tough}
Let $\{x_n\}_{n \in \mathbb{N}}$ be a sequence of points in an O-metric space $(X,d_{\om},a)$, such that $\displaystyle \lim_{n \to \infty} d_\om(x_n,x)=a$ for some $x \in X$; then, $\{x_n\}$ converges (in the O-metric topology) to $x \in X$. 
\end{proposition}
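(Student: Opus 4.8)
The plan is to argue straight from the definition of convergence in the topological space $(X,\mathcal{T}_{d_\om})$: the sequence $\{x_n\}$ converges to $x$ precisely when, for every $U \in \mathcal{T}_{d_\om}$ with $x \in U$, there is an index $N$ such that $x_n \in U$ for all $n \geq N$. Accordingly, I would fix an arbitrary open set $U$ containing $x$ and show that the sequence is eventually contained in it.

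The key observation is that although an open ball $B_{d_\om}(x,r)$ need not itself lie in $\mathcal{T}_{d_\om}$, the defining property of the topology in \eqref{topology} nonetheless hands me exactly what I need. Since $U$ is open and $x \in U$, there exists $r>0$ with $B_{d_\om}(x,r) \subset U$; note this is consistent because $d_\om(x,x)=a$ by axiom (i), so $x$ itself lies in every such ball. It therefore suffices to show that $\{x_n\}$ eventually lies inside $B_{d_\om}(x,r)$.

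Now I would invoke the hypothesis. Reading $\displaystyle\lim_{n\to\infty} d_\om(x_n,x)=a$ as a limit of the real numbers $d_\om(x_n,x) \in I_a$, there exists $N \in \mathbb{N}$ such that $|d_\om(x_n,x)-a|<r$ for all $n \geq N$; by the definition of the ball, this is precisely the assertion that $x_n \in B_{d_\om}(x,r)$ for all $n \geq N$. Chaining the inclusion $B_{d_\om}(x,r) \subset U$ then gives $x_n \in U$ for all $n \geq N$, which is exactly what convergence to $x$ requires.

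I do not anticipate a genuine obstacle: the content of the proposition is simply that O-metric (numerical) convergence implies topological convergence, and the proof is a matter of unwinding the definition of $\mathcal{T}_{d_\om}$ correctly. The one point worth flagging is that one should \emph{not} attempt the converse by the same route. Because a ball $B_{d_\om}(x,r)$ may fail to be open (as the preceding b-metric example illustrates), it need not be a neighborhood of $x$, so topological convergence cannot in general force $d_\om(x_n,x) \to a$. This asymmetry is precisely why the statement is phrased as a one-way implication.
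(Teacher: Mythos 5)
Your proposal is correct and follows essentially the same route as the paper's own proof: fix an open set containing $x$, extract $r>0$ with $B_{d_\om}(x,r)$ inside it via the definition of $\mathcal{T}_{d_\om}$, and use the numerical limit to place the tail of the sequence in that ball. Your closing remark about why the converse cannot be obtained the same way (balls need not be open) also matches the paper's discussion immediately following the proposition.
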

\begin{proof}
Let $\{x_n\}$ be a sequence of points in $X$, and $x \in X$ be such that $\lim_{n \to \infty} d_\om(x_n,x)=a$. Let $A$ be an open set (in the O-metric topology) containing $x$. Then, there is $r>0$ such that $B_{d_\om}(x,r) \subset A$. For such $r>0$, there is $n_0 \in \mathbb{N}$ such that $|d_\om(x_n,x)-a|< r$ for all $n \geq n_0$. Thus, $x_n \in B_{d_\om}(x,r) \subset A$ for $n \geq n_0$. This means that $\{x_n\}$ converges to $x$ in the topology $\mathcal{T}_{d_{\mathnormal{\mathbf{o}}}}$.
\end{proof}
\noindent
It is not known whether the converse always hold. Suppose $\{x_n\}$ is a convergent sequence in the O-metric topology, and $x$  a limit of $\{x_n\}$; then, 
\begin{equation}\label{GreatGod}
\mbox{for any open set $A$, there is $n_0 \in \mathbb{N}$ such that $x_n \in A$ for all $n \geq n_0$.}
\end{equation}
\noindent
For any $\epsilon>0$, $B_{d_\om}(x,\epsilon)$ is not necessarily an open set, hence, the substitution $A=B_{d_\om}(x,\epsilon)$ in (\ref{GreatGod}) (which would mean that $\lim_{n \to \infty}d_\om(x_n,x)=a$) is not allowed. However, if the following property holds,
\begin{equation}
\mbox{for all $\epsilon>0$, there is an open set $A_\epsilon$ containing $x$ such that $A_\epsilon \subset B_{d_\om}(x,\epsilon)$,}
\end{equation}
then $\lim_{n \to \infty}d_\om(x_n,x)=a$.
\\
\\
\noindent
In view of Proposition \ref{tough}, we state the following definition:
\begin{definition}
Let $(X,d_\om,a)$ be an O-metric space. A sequence $\{x_n\}_{n \in 
\mathbb{N}}$ of points in $X$ is said to O-converge to $x \in X$ if $\lim_{n \to \infty} d_\mathnormal{%
\mathbf{o}}(x_n,x)=a$. In such case, $x$ is called the O-limit of $\{x_n\}$, and we write $x_n \xrightarrow{\text{O}} x$. 
\end{definition}
\noindent
In the remaining part of the article, when no confusion arises, ``convergence" will mean ``O-convergence", ``limit" will mean ``O-limit" , and ``$x_n \to x$" will mean ``$x_n \xrightarrow{\text{O}} x$".  
\\
\\
It should be noted that the O-limit of an O-convergent sequence may not be unique as seen in the example below:
\begin{example}
\label{olala} Consider the O-metric space $(X,d_\mathnormal{\mathbf{o}},a)$
where $X=[-1,1]$, $~~a=1$, 
\begin{equation}
\label{GodisGreat}
d_\mathnormal{\mathbf{o}}(x,y)=\left\{%
\begin{array}{lll}
1 & \mbox{ if } x=y &  \\ 
|xy| & \mbox{ if } x \neq y & 
\end{array}%
\right.
\end{equation}
and 
\begin{equation}
\label{GodisGreat1}
\mathnormal{\mathbf{o}}(u,v)=\left\{%
\begin{array}{lll}
\frac{1}{uv} & \mbox{ if } u,v \neq 0, &  \\ 
1 & \mbox{ if } u=0 \mbox{ or } v=0. & 
\end{array}%
\right.
\end{equation}
\noindent
Consider the sequence $\{x_n\}_{n \in \mathbb{N}}$ defined by $%
x_n=1-\frac{1}{n}$, for all $n \in \mathbb{N}$. For $x \in \{-1,1\}$, $d_%
\mathnormal{\mathbf{o}}(x_n,x)=\left(1-\frac{1}{n}\right)|x|=1-\frac{1}{n}
\to 1$ as $n \to \infty$. Hence $\pm 1$ are both O-limits of the sequence $\{x_n\}$.
\end{example}
\noindent
The following definition of Cauchy sequences in O-metric spaces is adopted:
\begin{definition}
Let $(X,d_\om,a)$ be an O-metric space. A sequence $\{x_n\}_{n \in 
\mathbb{N}}$ of points in $X$ is said to be a Cauchy sequence if $\lim_{n,m \to \infty} d_\mathnormal{\mathbf{o}}%
(x_n,x_m)=a$. 
\end{definition}
\noindent
Interestingly, unlike the metric-type spaces in Example \ref{referencemet}, an O-convergent sequence in an O-metric space may not even be a Cauchy sequence:

\begin{example}\label{AIMS}
\textup{
Define for pairs $(x,y)$ of positive real numbers, the function:
\begin{equation*}
d(x,y)=\left\{
\begin{array}{lll}
\frac{2xy}{x^2+y^2}, & \mbox{if } x \neq y
\\
0, & \mbox{if } x =y.
\end{array}
\right.
\end{equation*}
\noindent
Given that $2xy \leq x^2+y^2$ with equality if and only if $x=y$, we have that $d(x,y) \in [0,1)$ for any $x,y>0$. 
In fact, for distinct positive real numbers $x,y, z$, $d(x,y) \leq \frac{2d(x,z)}{d(z,y)}$, since
\begin{equation*}
\begin{array}{lllccclll}
d(x,z) &=& \frac{2xz}{x^2+z^2} 
\\
&=& \frac{2xy}{x^2+y^2}\cdot \frac{2yz}{y^2+z^2}\cdot \frac{(x^2+y^2)(y^2+z^2)}{2y^2(x^2+z^2)}
\\
&=& d(x,y)d(y,z) \frac{x^2 y^2+x^2 z^2+y^4+y^2 z^2}{2(x^2 y^2 +y^2 z^2)}
\\
& \geq & \frac{1}{2}d(x,y)d(y,z).
\end{array}
\end{equation*}
In the case where $x,y,z$ are not all distinct, $d(x,y) \leq d(x,z)+d(z,y)$. Therefore,
$$d(x,y) \leq \om(d(x,z),d(z,y))$$ for all $x,y,z \in X$, where $\om:I_0 \times I_0 \to \mathbb{R}_+$, with $I_0=[0,1)$, is defined by
\begin{equation*}
\om(u,v)=\left\{\begin{array}{lll} \frac{2u}{v}, & \mbox{ if   $uv \neq 0$} \\ u+v, &\mbox{ otherwise}. \end{array} \right.
\end{equation*}
Therefore $(X,d,0)$ is an $\om$-metric space, with $X=(0,\infty)$.
\\
Let $\{x_n\}$ be the sequence in $X$ such that $x_n =\frac{1}{n}$ for all $n \in \mathbb{N}$. 
\\
Then $x_n \xrightarrow{\text{O}} x$ for any $x >0$, since
\begin{equation*}
d(x_n,x)=\frac{\frac{2x}{n}}{\frac{1}{n^2}+x^2}=\frac{2x}{n} \cdot \frac{n^2}{1+n^2 x^2}=\frac{2nx}{1+n^2x^2} \to 0 ~\mbox{ as $n \to \infty$}.
\end{equation*}
However, $\{x_n\}$ is not a Cauchy sequence since for any $n,m \in \mathbb{N}$,
\begin{equation*}
d(x_n,x_m)=\frac{\frac{2}{nm}}{\frac{1}{n^2}+\frac{1}{m^2}}=\frac{2nm}{n^2+m^2} \nrightarrow 0 \mbox{ as } n,m \to 0.
\end{equation*}
}
\end{example}
\noindent
The definition of O-completeness is introduced as follows:
\begin{definition}
An O-metric space $(X,d_\om,a)$ is called O-complete if the Cauchy sequences in $X$ are the O-convergent sequences in $X$.
\end{definition}
\noindent
Thus, in an O-complete O-metric space, every O-convergent sequence is a Cauchy sequence and every Cauchy sequence is O-convergent.
\\
\\
Next, we state some conditions on the binary operation $\om$ for the O-metric topology on an $\om$-metric space $(X,d_\om,a)$ to be metrizable, Hausdorff, and such that O-convergent sequences have unique O-limits. For properties of the O-metric topology, the focus will be on upward O-metrics  since the O-metric topology on any O-metric space is induced by some upward O-metric:

\begin{theorem}
\label{surprise} 
The topology of an O-metric space $X$ is induced by an upward O-metric on $X$.
\end{theorem}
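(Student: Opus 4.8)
The plan is to leave the underlying set and the value $a$ untouched and to \emph{fold} the O-metric above the level $a$ by means of the map $g(t)=a+|t-a|$ already featured in Example \ref{genl}. Concretely, I would set $D(x,y):=g\big(d_\om(x,y)\big)=a+|d_\om(x,y)-a|$ and keep the same self-distance $a$. By construction $D(x,y)\ge a$, so $D$ takes its values in $J_a:=g(I_a)\subseteq[a,\infty)$, which is again an interval containing $a$ (it is the continuous image of the interval $I_a$ under $g$, and $g(a)=a$); hence $D$ is a candidate for an \emph{upward} O-metric. The reason the topology survives this folding is that $\mathcal{T}_{d_\om}$ sees the O-metric only through the quantity $|d_\om(x,y)-a|$ in the balls $B_{d_\om}(x,r)$: since $D(x,y)-a=|d_\om(x,y)-a|\ge 0$, we get $|D(x,y)-a|=|d_\om(x,y)-a|$ for all $x,y$, so $B_D(x,r)=B_{d_\om}(x,r)$ for every $x$ and $r>0$, and therefore $\mathcal{T}_{d_\om}$ is induced by $D$ exactly. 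Thus, once $D$ is shown to be an O-metric, we are done.

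Axioms (i) and (ii) of Definition \ref{psimetric} for $D$ are immediate: $D(x,y)=a\iff d_\om(x,y)=a\iff x=y$, and symmetry of $D$ follows from that of $d_\om$. The substance of the proof is to produce a binary operation $\om':J_a\times J_a\to\mathbb{R}_+$ for which the triangle $\om'$-inequality holds. Since Definition \ref{psimetric} imposes no monotonicity or continuity on the operation, I am free to define $\om'$ by pushing $\om$ forward through $g$ and taking a supremum over preimages:
\[
p\,\om'\,q:=\sup\Big\{\,g(w): w,u,v\in I_a,\ g(u)=p,\ g(v)=q,\ w\le u\,\om\,v\,\Big\},
\]
with the convention that the supremum of the empty set is $0$. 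Such pairs $(p,q)$ never arise as $(D(x,y),D(y,z))$, so this convention is harmless.

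With this definition the triangle inequality is essentially built in: given $x,y,z$, put $u=d_\om(x,y)$, $v=d_\om(y,z)$, $w=d_\om(x,z)$, so that $g(u)=D(x,y)$, $g(v)=D(y,z)$, $g(w)=D(x,z)$, and $w\le u\,\om\,v$ by the triangle $\om$-inequality for $d_\om$. Hence the triple $(w,u,v)$ is admissible in the set defining $D(x,y)\,\om'\,D(y,z)$, and $D(x,z)=g(w)\le D(x,y)\,\om'\,D(y,z)$. It remains to verify that $\om'$ is genuinely $\mathbb{R}_+$-valued, i.e.\ that the supremum is finite, and this is where the only real obstacle lies: $g$ is not monotone — it folds at $a$ — so one cannot simply dominate $g(w)$ by $g(u\,\om\,v)$. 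The fix is that for fixed $p,q\in J_a$ the preimages are at most two points, $g^{-1}(p)\cap I_a\subseteq\{p,\,2a-p\}$ and likewise for $q$; thus $u\,\om\,v$ ranges over a finite set of reals with maximum $M$, every admissible $w$ satisfies $\inf I_a\le w\le M$, and on $[\inf I_a,M]$ the map $g$ (decreasing then increasing) attains the finite maximum $\max\{g(\inf I_a),g(M)\}$. This bounds the supremum and shows $\om'$ is well defined. Carrying out the finiteness estimate and attending to the boundary cases of $I_a$ is the main technical step; everything else is routine.
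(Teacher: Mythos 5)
Your proof is correct, and its skeleton coincides with the paper's: your $D$ is exactly the paper's $d_\xi$ from (\ref{equationola}), and your observation that $|D(x,y)-a|=|d_\om(x,y)-a|$, hence $B_D(x,r)=B_{d_\om}(x,r)$, is precisely how the paper transfers the topology. The genuine difference is the witnessing operation. The paper writes down the explicit formula (\ref{equationolala}), $\xi(u,v)=\max\left\{\om(u,v),\om(u,2a-v),\om(2a-u,v),\om(2a-u,2a-v),2a\right\}$, i.e.\ a maximum of $\om$ over the candidate preimages $\{u,2a-u\}\times\{v,2a-v\}$ of $(u,v)$ under $g$, with the extra term $2a$ absorbing the case $d_\om(x,z)<a$; you instead define $\om'$ abstractly as a supremum of $g(w)$ over admissible preimage triples, so that the triangle $\om'$-inequality holds by construction and the burden shifts to finiteness, which you settle using the same two facts the paper's formula exploits (two-point preimages under $g$, and the $2a$ bound on the folded part). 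What your version buys is robustness: the paper's $\xi$ need not even be defined, because $2a-u$ and $2a-v$ can fall outside $I_a$, the domain of $\om$ (the paper's own Remark concedes that $\xi$ is hard to construct for Example \ref{olala} and exhibits an ad hoc operation $\chi$ instead), and the paper's stated codomain $J_a=I_a\cap[a,\infty)$ is too small in general --- in Example \ref{olala} the folded metric $d_\xi$ ranges over $[1,2]$ while $I_a\cap[a,\infty)=\{1\}$; your choice $J_a=g(I_a)$, together with your rule of evaluating $\om$ only at genuine elements of $I_a$, repairs both defects. What the paper's version buys is explicitness: when $I_a$ is symmetric enough about $a$ for $\xi$ to make sense, it is a concrete, computable formula, whereas your $\om'$ is a supremum that one cannot in general evaluate in closed form.
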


\begin{proof}
Let $(X,d_\om,a)$ be an O-metric space, with $a \in \mathbb{R}_+$, and $I_a$ an interval of non-negative numbers containing $a$. Let $J_a:=I_a \cap [a,\infty)$, and define the maps $\xi:J_a \times J_a \to \mathbb{R}_+$ and $d_\xi: X \times X \to J_a$ by:
\begin{equation}\label{equationolala}
\xi(u,v)=\max\left\{\om(u,v),\om(u,2a-v),\om(2a-u,v),\om(2a-u,2a-v),2a\right\} ~~ \forall u,v \in J_a,
\end{equation}
\begin{equation}\label{equationola}
d_\xi(x,y)=a+|d_\om(x,y)-a| ~~ \forall x,y \in X.
\end{equation}
$(X,d_\xi,a)$ is an $a$-upward O-metric space and the topology induced by $d_\om$ is the same as the topology induced by $d_\xi$ as $B_{d_\om}(x,r)=B_{d_\xi}(x,r)$ for all $x \in X$ and $r>0$.
\end{proof}

\noindent
\begin{remark}
\textup{
It should be noted that the function $\xi$ in (\ref{equationolala}) was only defined to show that $d_\xi$ defined in (\ref{equationola}) is indeed an upward O-metric. Consider for example, the downward O-metric space defined in Example \ref{olala}. Given $d_\om$  defined in (\ref{GodisGreat}) for all $x,y \in [-1,1]$ as
$$
d_\mathnormal{\mathbf{o}}(x,y)=\left\{%
\begin{array}{lll}
1 & \mbox{ if } x=y &  \\ 
|xy| & \mbox{ if } x \neq y, & 
\end{array}%
\right.$$
using equation (\ref{equationola}), $d_\xi(x,y)=2-|xy|$ for all $x,y \in [-1,1]$. Given the function $\om$ defined in (\ref{GodisGreat1}) by $$\mathnormal{\mathbf{o}}(u,v)=\left\{%
\begin{array}{lll}
\frac{1}{uv} & \mbox{ if } u,v \neq 0, &  \\ 
1 & \mbox{ if } u=0 \mbox{ or } v=0,& 
\end{array}%
\right.$$ it is hard to construct $\xi$ using equation (\ref{equationolala}). However, it is easy to show directly that $d_\xi$ is an $\chi$-metric, with $\chi(u,v)=2(u+v-1)-uv$ for all $u,v \in [1,2]$. Thus $d_\xi$ is an upward $\xi$-metric, but also an upward $\chi$-metric. 
}


%
%
%
%
\end{remark}

\subsection{Conditions for Metrizability}

\noindent
\begin{theorem}\label{metricequiv}\textup{\cite{ourpaper}}
Let $X$ be a non-empty set, $a \in \mathbb{R}_+$ a non-negative real number, $\om:[a,\infty) \times [a,\infty) \to  [a,\infty)$ a binary operation on $[a,\infty)$, and $\lambda:[a,\infty) \to [0,\infty)$ a function such that the following properties are satisfied:
\begin{itemize}
\item[$(E_1)$]\label{E1} $\lambda$ is increasing, with $\lambda(a)=0$;

\item[$(E_2)$]\label{E2} $\lambda(\om(u,v))=\lambda(u)+\lambda(v) ~ \forall u,v \geq a.$
\end{itemize}
$(X,d_\om,a)$ is an upward O-metric space if and only if $(X,\lambda \circ d_\om)$ is a metric space. 
In such case, the O-metric topology on $(X,d_\om,a)$ and the metric topology on $(X,\lambda \circ d_\om)$ coincide.
\end{theorem}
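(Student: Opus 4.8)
The plan is to fix the composite $\rho := \lambda \circ d_\om$ and to transport each defining property back and forth through $\lambda$, relying on two features of $(E_1)$--$(E_2)$. First, $(E_2)$ converts the operation $\om$ into ordinary addition, $\lambda(u \, \om \, v) = \lambda(u) + \lambda(v)$. Second, since $\lambda$ is strictly increasing with $\lambda(a) = 0$, it is injective, it reflects order (that is, $\lambda(s) \le \lambda(t)$ forces $s \le t$), and $\lambda^{-1}(0) = \{a\}$; strictness is exactly what is needed here, for a non-injective $\lambda$ would already wreck the identity of indiscernibles. Because $d_\om$ takes its values in $[a,\infty)$, the identity $(E_2)$ applies to every pair that occurs, so a single dictionary drives both implications of the equivalence.

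For the direction ``$d_\om$ upward O-metric $\Rightarrow$ $\rho$ metric'' I would check the metric axioms in turn. Nonnegativity is built into the codomain of $\lambda$, and symmetry of $\rho$ is inherited verbatim from axiom (ii). For the identity of indiscernibles, $\rho(x,y) = 0$ reads $\lambda(d_\om(x,y)) = 0 = \lambda(a)$, whence $d_\om(x,y) = a$ by injectivity and $x = y$ by (i), the converse being immediate from $\lambda(a) = 0$. The triangle inequality is the heart of the matter: applying the increasing map $\lambda$ to the triangle $\om$-inequality (iii) and then $(E_2)$ gives
\[
\rho(x,z) = \lambda(d_\om(x,z)) \le \lambda\bigl(d_\om(x,y) \, \om \, d_\om(y,z)\bigr) = \rho(x,y) + \rho(y,z).
\]
The reverse direction runs the same steps backwards: (i) comes again from $\lambda^{-1}(0) = \{a\}$; symmetry of $d_\om$ is recovered from that of $\rho$ by injectivity of $\lambda$; and the triangle $\om$-inequality follows by applying order-reflection to $\lambda(d_\om(x,z)) \le \lambda(d_\om(x,y)) + \lambda(d_\om(y,z)) = \lambda(d_\om(x,y) \, \om \, d_\om(y,z))$, the last equality being $(E_2)$ read in reverse.

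For the coincidence of topologies I would compare the two ball systems about a common centre. Since $d_\om \ge a$, the O-ball is $B_{d_\om}(x,r) = \{ y : d_\om(x,y) < a + r \}$, while the metric ball is $B_\rho(x,\epsilon) = \{ y : \lambda(d_\om(x,y)) < \epsilon \}$. One containment is free: order-reflection yields $B_\rho(x, \lambda(a+r)) \subseteq B_{d_\om}(x,r)$, with $\lambda(a+r) > 0$ because $a + r > a$. It then suffices to produce, for each $\epsilon > 0$, a radius $r > 0$ with $B_{d_\om}(x,r) \subseteq B_\rho(x,\epsilon)$; since $\lambda$ is increasing this reduces to finding $r > 0$ with $\lambda(a + r) \le \epsilon$. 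Granting both inclusions, a set contains a $d_\om$-ball about each of its points iff it contains a $\rho$-ball about each of its points, so $\mathcal{T}_{d_\om} = \mathcal{T}_\rho$.

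I expect this last step to be the main obstacle. Solving $\lambda(a+r) \le \epsilon$ for arbitrarily small $\epsilon$ requires $\lim_{t \to a^+} \lambda(t) = 0$, i.e.\ right-continuity of $\lambda$ at $a$; a merely monotone $\lambda$ may jump there, and then this inclusion fails. Indeed, such a jump would let Proposition \ref{tough} manufacture an O-convergent sequence $\{x_n\}$ with $d_\om(x_n,x) \to a$ yet $\rho(x_n,x) \not\to 0$, separating the two topologies. The plan is therefore to invoke (right-)continuity of $\lambda$ at $a$ at precisely this point; with that in hand the mutual ball containments close the argument.
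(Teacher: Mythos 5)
Your proof of the equivalence itself is correct and is essentially the paper's own argument: in the forward direction you apply the strictly increasing $\lambda$ to axioms (i)--(iii) and use $(E_2)$ to convert the triangle $\om$-inequality into the additive one; in the converse direction you recover the O-metric axioms from the metric axioms by injectivity and order reflection, which is exactly what the paper does, phrased there in terms of $\lambda^{-1}:\mathrm{Im}(\lambda)\to[a,\infty)$.

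The obstacle you isolate in the topological part is a genuine gap, but it is a gap in the theorem and in the paper's proof, not a repair you failed to find: right-continuity of $\lambda$ at $a$ does not follow from $(E_1)$--$(E_2)$, and without it the final claim is false. The paper establishes the ball identity $B_{d_\om}(x,r)=B_{\lambda\circ d_\om}(x,\lambda(a+r))$, which (like your ``free'' containment) yields only the inclusion $\mathcal{T}_{d_\om}\subseteq\mathcal{T}_{\lambda\circ d_\om}$, and then asserts equality of the topologies ``as $\lambda$ is bijective''; that assertion is precisely your missing step, since it needs radii of the form $\lambda(a+r)$ to be arbitrarily small. A concrete counterexample: let $a=0$, $\lambda(0)=0$ and $\lambda(t)=1+t$ for $t>0$, so that $\mathrm{Im}(\lambda)=\{0\}\cup(1,\infty)$ is closed under addition and $\om(u,v):=\lambda^{-1}(\lambda(u)+\lambda(v))$ (explicitly $\om(u,v)=u+v+1$ if $u,v>0$, $\om(u,0)=u$, $\om(0,v)=v$) satisfies $(E_1)$--$(E_2)$. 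On $X=\{0\}\cup\{1/n:\ n\in\mathbb{N}\}$ put $d_\om(x,y)=|x-y|$; the triangle $\om$-inequality holds (for three distinct points it is the usual triangle inequality weakened by the extra $+1$, and when two of the points coincide it holds with equality), so $(X,d_\om,0)$ is an upward O-metric space. Then $\lambda\circ d_\om$ equals $1+|x-y|$ off the diagonal, hence its metric topology is discrete, while $\{0\}$ is not open in $\mathcal{T}_{d_\om}$ because every ball $B_{d_\om}(0,r)$ contains $1/n$ for all $n>1/r$. This realizes exactly the failure you predicted via Proposition \ref{tough}. So your insistence on $\lim_{t\to a^+}\lambda(t)=0$ (equivalently $\inf_{t>a}\lambda(t)=0$) as an additional hypothesis is the correct repair of the statement, and with it your two ball containments complete the topological claim. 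In short: where your proof agrees with the paper it is correct, and where it stalls, the paper's own proof is itself incomplete.
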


\begin{proof}  Let $(X,d_\om,a)$ be an upward O-metric space. Under condition $(E_1)$, $\lambda(u)=0$ if and only if $u=a$, for all $u \geq a$. Let $x,y,z \in X$; we have that:
\begin{equation*}
\begin{array}{lcl}
\lambda (d_\om (x,y)) = 0 \Leftrightarrow d_\om(x,y)=a \Leftrightarrow x=y,
\\
\lambda(d_\om(x,y))=\lambda(d_\om(y,x)),
\\
\lambda(d_\om(x,z)) \leq \lambda\left(\om(d_\om(x,y),d_\om(y,z))\right) =\lambda(d_\om(x,y))+\lambda(d_\om(y,z)).
\end{array}
\end{equation*}
Therefore, $\lambda \circ d_\om$ is a metric on $X$.
\\
The converse holds: if $\lambda \circ d_\om$ is a metric on $X$, then, $d_\om$ is an $a$-upward O-metric on $X$. Indeed, under conditions $(E_1)$ and $(E_2)$, the function $\lambda:[a,\infty) \to {\rm Im}(\lambda)$ is bijective, $\lambda^{-1}(\lambda(u))=u$, and $\lambda^{-1}(\lambda(u)+\lambda(v))=\om(u,v)$ for all $u \geq a$; therefore, if $x,y,z \in X$,
\begin{equation*}
\begin{array}{lcl}
d_\om (x,y) = a \Leftrightarrow \lambda(d_\om(x,y))=0 \Leftrightarrow x=y,
\\
d_\om(x,y)=\lambda^{-1}\left(\lambda(d_\om(x,y))\right)=\lambda^{-1}\left(\lambda(d_\om(y,x))\right)=d_\om(y,x),
\end{array}
\end{equation*}
\begin{equation*}
\begin{array}{lcl}
d_\om(x,z) &=& \lambda^{-1}\left(\lambda(d_\om(x,y))\right) 
\\
&\leq& \lambda^{-1}\left(\lambda(d_\om(x,y))+\lambda(d_\om(y,z))\right)
\\
&=& \om(d_\om(x,y),d_\om(y,z))
\end{array}
\end{equation*}
\noindent
Now, let $\mathcal{T}_{d_\om}$ be the O-metric topology on the space $(X,d_\om,a)$, where $\om$ satisfies conditions $(E_1)$ and $(E_2)$. Then, for $x \in X$ and $r>0$, the open ball $B_{d_\om}(x,r)$ is given by:
\begin{equation*}
\begin{array}{lcl}
B_{d_\om}(x,r)&=&\{ y \in X: ~ |d_\om(x,y)-a|<r \}
\\
&=& \{y \in X:~ a \leq d_\om(x,y)<a+r\} ~~ \mbox{(since $d_\om$ is an $a$-upward O-metric space)}
\\
&=& \{ y \in X: ~ 0 \leq \lambda(d_\om(x,y))<\lambda(a+r)\}
\\
&=& B_{\lambda\circ d_\om}(x,\lambda(a+r)),
\end{array}
\end{equation*}
where $B_{\lambda\circ d_\om}(x,\lambda(a+r))$ is the open ball centered on $x \in X$ with radius $r$, relative to the metric topology on $(X,\lambda \circ d_\om)$. As $\lambda$ is bijective, we can conclude that the open sets in the O-metric topology on $(X,d_\om,a)$ are exactly the open sets in the metric topology on $(X,\lambda \circ d_\om)$.
\end{proof}

\begin{example}
Consider the function $\lambda:[1,\infty) \to [0,\infty)$ defined by $\lambda(t)=\ln(t)$ for all $t \geq 1$. Since $\lambda$ is increasing such that $\lambda(1)=0$ and $\lambda(uv)=\lambda(u)+\lambda(v)$ for all $u,v \in [1,\infty)$, conditions $(E_1)$ and $(E_2)$ of Theorem \ref{metricequiv} are satisfied, with $\om$ defined by $\om(u,v)=uv$ for all $u,v \geq 1$. Therefore, the topology on a multiplicative metric space  $(X,d_\times,1)$ coincides with the metric topology on the assciated metric space $(X,\lambda \circ d_\times)$.
\end{example}
\noindent
From Theorem \ref{metricequiv}, we can state the following proposition:
\begin{proposition}
Let $(X,d_\om,a)$ be an $a$-upward $\om$-metric space such that $\om$ satisfies conditions $(E_1)$ and $(E_2)$ for some function $\lambda:[a,\infty) \to [0,\infty)$. A sequence $\{x_n\}$ of points in $X$ converges in the O-metric topology to some point $x \in X$ if and only if  $x_n \xrightarrow{\text{O}} x$. 
\end{proposition}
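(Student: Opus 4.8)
The plan is to split the biconditional into its two implications: the forward direction will follow immediately from Proposition \ref{tough}, while for the converse I will reduce everything to the single fact that, under hypotheses $(E_1)$ and $(E_2)$, the open balls centred at $x$ are genuine open sets of the O-metric topology.

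First I would treat the implication $x_n \xrightarrow{\text{O}} x \Rightarrow$ topological convergence. This is nothing but Proposition \ref{tough}: by definition of O-convergence, $\lim_{n\to\infty} d_\om(x_n,x)=a$, and the cited proposition hands us convergence of $\{x_n\}$ to $x$ in $\mathcal{T}_{d_\om}$. Note that this direction never invokes $(E_1)$ or $(E_2)$; it holds in an arbitrary O-metric space.

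The substance lies in the converse. Here the controlling observation is the one recorded after Proposition \ref{tough}: topological convergence already forces $d_\om(x_n,x)\to a$ as soon as every $B_{d_\om}(x,\epsilon)$ contains an open neighbourhood of $x$. I would verify this by showing each such ball is itself open. From the computation carried out in the proof of Theorem \ref{metricequiv} one has, for every $\epsilon>0$, the identity $B_{d_\om}(x,\epsilon)=B_{\lambda\circ d_\om}(x,\lambda(a+\epsilon))$; moreover $a+\epsilon>a$, so the characterisation $\lambda(u)=0\Leftrightarrow u=a$ (established there from $(E_1)$) makes the radius $\lambda(a+\epsilon)$ strictly positive. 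Since open balls of a genuine metric are open sets, $B_{\lambda\circ d_\om}(x,\lambda(a+\epsilon))$ is open in the metric topology of $(X,\lambda\circ d_\om)$, and Theorem \ref{metricequiv} guarantees that this topology coincides with $\mathcal{T}_{d_\om}$. Hence $B_{d_\om}(x,\epsilon)$ is open in the O-metric topology.

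With the balls shown to be open, the conclusion is routine: fixing $\epsilon>0$, the equality $d_\om(x,x)=a$ places $x$ in the open set $B_{d_\om}(x,\epsilon)$, so topological convergence provides $n_0$ with $x_n\in B_{d_\om}(x,\epsilon)$, i.e. $|d_\om(x_n,x)-a|<\epsilon$, for all $n\geq n_0$; letting $\epsilon\to 0$ gives $d_\om(x_n,x)\to a$, that is $x_n\xrightarrow{\text{O}} x$. The single point where work concentrates is the openness of the balls, and this is exactly what $(E_1)$ and $(E_2)$ secure through the metrizability in Theorem \ref{metricequiv}; without such hypotheses this converse is precisely the open question flagged in the discussion following Proposition \ref{tough}, which is why I expect it to be the main obstacle.
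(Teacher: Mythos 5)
Your proof is correct, but the converse is handled by a genuinely different route than the paper's. The paper argues at the level of sequences: by Theorem \ref{metricequiv}, the O-metric topology coincides with the metric topology of $(X,\lambda\circ d_\om)$, so topological convergence of $\{x_n\}$ to $x$ gives $\lim_{n\to\infty}\lambda(d_\om(x_n,x))=0$, and monotonicity of $\lambda$ then converts this numerical statement back into $d_\om(x_n,x)\to a$, i.e.\ $x_n \xrightarrow{\text{O}} x$. You instead argue at the level of the topology: you extract from the proof of Theorem \ref{metricequiv} the ball identity $B_{d_\om}(x,\epsilon)=B_{\lambda\circ d_\om}(x,\lambda(a+\epsilon))$, note that the radius $\lambda(a+\epsilon)$ is strictly positive, conclude that every ``open ball'' centred at $x$ is genuinely open in $\mathcal{T}_{d_\om}$, and then invoke the general criterion recorded in the discussion after Proposition \ref{tough}. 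Both arguments lean entirely on Theorem \ref{metricequiv}, and neither is longer than the other; yours has the advantage of isolating a reusable intermediate fact (openness of balls under $(E_1)$ and $(E_2)$), which makes this proposition structurally identical to the later proposition proved under $(C_1)$ and $(C_2)$, where openness of balls is exactly what Theorem \ref{conly} supplies. The paper's version is more economical in that it never needs openness of balls, only the comparison of the two sequential convergences through $\lambda$. Incidentally, your choice of radius $\lambda(a+\epsilon)$ is slightly cleaner than the paper's corresponding step, which writes $\lambda(\epsilon)$ even though $\epsilon$ itself need not lie in the domain $[a,\infty)$ of $\lambda$.
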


\begin{proof}
We known from Proposition \ref{tough} that $x_n \xrightarrow{\text{O}} x$ implies that $\{x_n\}$ converges to $x$ in the O-metric topology. Conversely, suppose that $\{x_n\}$ converges to $x$ in the O-metric topology. From Theorem \ref{metricequiv}, $\{x_n\}$ converges to $x$ in the metric topology on $(X,\lambda \circ d_\om)$, hence $\lim_{n \to \infty} \lambda(d_\om(x_n,x))=0$. Given $\epsilon>0$, there is $n_0 \in \mathbb{N}$ such that $\lambda(d_\om(x_n,x))<\lambda(\epsilon)$, so $d_\om(x_n,x) <\epsilon$, for all $n \geq n_0$. Thus $\lim_{n \to \infty} d_\om(x_n,x))=a$, and  $x_n \xrightarrow{\text{O}} x$. 
\end{proof}

\subsection{Conditions for openness of open balls}

\noindent
The following theorem states conditions under which open balls are open sets in the O-metric topology.

\begin{theorem}
\label{conly} Let $(X,d_\mathnormal{\mathbf{o}},a)$ be an $a$-upward $\om$-metric
space, where $\om$ verifies the following conditions:
\begin{itemize}
\item[$(C_1)$] there exists $\gamma:[a,\infty) \times [a,\infty) \to \mathbb{%
R}$ such that $a \leq u <r \implies \left\{ 
\begin{array}{lll}
\gamma(r,u)>a &  &  \\ 
\mathnormal{\mathbf{o}}(u,\gamma(r,u))\leq r; &  & 
\end{array}%
\right.$

\item[$(C_2)$] $\mathnormal{\mathbf{o}}$ is increasing in both variables.
\end{itemize}
Then, every open ball is an open set in the O-metric topology, and the O-metric topology on $X$ is Hausdorff.
\end{theorem}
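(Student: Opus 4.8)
The plan is to establish the two claims separately, exploiting throughout that $d_\om$ is $a$-upward, so that $d_\om(x,y)\geq a$ and each open ball can be written as $B_{d_\om}(x,r)=\{y\in X:\ a\leq d_\om(x,y)<a+r\}$. The whole argument is driven by $(C_1)$, which manufactures a point $\gamma(r,u)>a$ acting as an ``inverse radius'', and by $(C_2)$, which lets me transport inequalities through $\om$.

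For the openness of balls, I would fix an open ball $B_{d_\om}(x,r)$ and a point $y$ in it, and seek $r'>0$ with $B_{d_\om}(y,r')\subseteq B_{d_\om}(x,r)$. Writing $u:=d_\om(x,y)$, upwardness gives $a\leq u<a+r$, so I can choose an intermediate level $R$ with $u<R<a+r$. Applying $(C_1)$ to the pair $(R,u)$ --- legitimate since $a\leq u<R$ --- yields $\gamma(R,u)>a$ and $\om(u,\gamma(R,u))\leq R$, and I set $r':=\gamma(R,u)-a>0$. Then for any $z\in B_{d_\om}(y,r')$ one has $d_\om(y,z)<a+r'=\gamma(R,u)$, and the triangle $\om$-inequality together with $(C_2)$ gives $d_\om(x,z)\leq \om(u,d_\om(y,z))\leq \om(u,\gamma(R,u))\leq R<a+r$, so $z\in B_{d_\om}(x,r)$.

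For the Hausdorff property, given distinct $x,y$ I set $D:=d_\om(x,y)$, which exceeds $a$ by axiom (i) and upwardness. I would pick $R$ with $a<R<D$, then $r_1\in(0,R-a)$, and apply $(C_1)$ to $(R,a+r_1)$ --- valid since $a\leq a+r_1<R$ --- to obtain $\gamma(R,a+r_1)>a$ and $\om(a+r_1,\gamma(R,a+r_1))\leq R$; then I put $r_2:=\gamma(R,a+r_1)-a>0$. A common point $z\in B_{d_\om}(x,r_1)\cap B_{d_\om}(y,r_2)$ would satisfy $d_\om(x,z)<a+r_1$ and $d_\om(z,y)<\gamma(R,a+r_1)$, so by $(C_2)$ and the triangle $\om$-inequality $D\leq \om(d_\om(x,z),d_\om(z,y))\leq \om(a+r_1,\gamma(R,a+r_1))\leq R<D$, a contradiction; hence the two balls are disjoint. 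Since both are open by the first part and contain $x$ and $y$ respectively, the topology is Hausdorff.

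The main obstacle, and the single idea powering both parts, is strictness: condition $(C_2)$ only asserts that $\om$ is non-decreasing (so as to include operations like $\max$), and $(C_1)$ only gives $\om(u,\gamma(r,u))\leq r$ with a non-strict inequality. Consequently a naive choice of radius would yield $d_\om(x,z)\leq a+r$ rather than the required strict bound. Interposing a level $R$ strictly below the target ($R<a+r$ in the first part, $R<D$ in the second) is exactly what converts the non-strict estimates coming from $(C_1)$--$(C_2)$ into the strict inequalities that membership in an open ball, and disjointness of two balls, demand.
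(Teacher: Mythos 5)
Your proof is correct, and it follows the same overall blueprint as the paper's: in both parts the function $\gamma$ from $(C_1)$ manufactures the radius of the inner ball, and $(C_2)$ plus the triangle $\om$-inequality transports the estimates; the Hausdorff property is obtained by the same contradiction via a common point of two balls. The genuine difference is where $(C_1)$ is invoked. The paper applies it directly at the target level --- at $(a+r,\,d_\om(x,x_0))$ for openness and at $(d_\om(x,y),\,a+r)$ for Hausdorff --- and then needs the \emph{strict} inequality $\om(d_\om(x_0,x),d_\om(x,y)) < \om(d_\om(x_0,x),a+s)$, i.e., it reads ``increasing'' in $(C_2)$ as strictly increasing (consistent with the paper's use of ``nondecreasing'' elsewhere). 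You instead interpose an intermediate level $R$ strictly below the target ($u<R<a+r$, resp.\ $a<R<D$) and apply $(C_1)$ at $(R,u)$, resp.\ $(R,a+r_1)$, so that the final bound $\leq R <a+r$ (resp.\ $\leq R<D$) is strict even though every monotonicity step is non-strict. This buys real generality: your argument goes through when $\om$ is merely nondecreasing, which covers operations like $\om(u,v)=\max\{u,v\}$ (the ultrametric case, where $\om$ satisfies $(C_1)$ with $\gamma(r,u)=r$ but is not strictly increasing), whereas the paper's proof as written does not. A side benefit of your writeup is that the constraint on the second radius in the Hausdorff argument ($r_1<R-a<D-a$) is made explicit, where the paper leaves the admissible range of $r$ implicit.
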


\begin{proof}
\item
Let $B_{d_\om}(x_0,r)$ be the open ball centered on $x_0 \in X$ with radius $r>0$. To show that $B_{d_\om}(x_0,r)$, we show that for any $x \in B_{d_\om}(x_0,r)$, there is $s>0$ such that $B_{d_\om}(x,s) \subset B_{d_\om}(x_0,r)$.
\\
Let $x \in B_{d_\om}(x_0,r)$. Then $a \leq d_\om(x,x_0)<a+r$, and by $(C_1)$, 
\begin{equation}\label{referencia}
\left\{
\begin{array}{lll}
\gamma(a+r,d_\om(x,x_0))>a \\
\om(d_\om(x,x_0),\gamma(a+r,d_\om(x,x_0))) \leq a+r.
\end{array}
\right.
\end{equation}
If we let $s:=\gamma(a+r,d_\om(x,x_0))-a$, then $s>0$, and if $y \in B_{d_\om}(x,s)$ (i.e., if $a\leq d_\om(x,y)<a+s$), then $y \in B_{d_\om}(x_0,r)$, since, by combining the triangle $\om$-inequality, property $(C_2)$, and (\ref{referencia}),
$$
\begin{array}{lll}
a \leq d_\om(x_0,y) &\leq& \om(d_\om(x_0,x),d_\om(x,y)) 
\\
&<&\om(d_\om(x_0,x),a+s)=\om(d_\om(x_0,x),\gamma(a+r,d_\om(x,x_0))) 
\\
&\leq& a+r.
\end{array}
$$
The open ball $B_{d_\om}(x_0,r)$ is therefore an open set, for any $x_0 \in X$ and $r>0$.
\item
Let $x,y \in X$ be such that $x \neq y$. Take $U:=B_{d_\om}(x,q)$ and $V:=B_{d_\om}(y,r)$, where $q=\gamma(d_\om(x,y),a+r)-a$. Then $U$ and $V$ are disjoint open sets such that $x \in U$ and $y \in V$. To show that $U\cap V=\emptyset$, we prove by contradiction: if $z \in B_{d_\om}(x,q) \cap B_{d_\om}(y,r)$, then $d_\om(x,z)<a+ q=\gamma(d_\om(x,y),a+r)$ and $d_\om(y,z)<a+r$, so $d_\om(y,x) \leq \om(d_\om(y,z),d_\om(z,x)) < \om(a+r,\gamma(d_\om(x,y),a+r)) \leq d_\om(x,y)$ from $(C_1)$, which is absurd. Therefore, the O-metric topology is Hausdorff.
\end{proof}
\noindent
The conditions $(C_1)$ and $(C_2)$ also guarantee that O-convergence and convergence in the O-metric topology coincide, as shown in the following proposition:
\begin{proposition}
Let $(X,d_\om,a)$ be an $a$-upward $\om$-metric space such that $\om$ satisfies conditions $(C_1)$ and $(C_2)$. A sequence $\{x_n\}$ of points in $X$ converges in the O-metric topology to some point $x \in X$ if and only if  $x_n \xrightarrow{\text{O}} x$. 
\end{proposition}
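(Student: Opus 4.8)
The plan is to prove the two implications separately, and to observe that the forward implication is exactly the place where the hypotheses $(C_1)$ and $(C_2)$ do their work, via the preceding Theorem \ref{conly}. The backward implication requires no new effort at all: it is precisely the content of Proposition \ref{tough}, which holds in \emph{any} O-metric space without extra assumptions on $\om$. So I would begin by citing Proposition \ref{tough} to conclude that $x_n \xrightarrow{\text{O}} x$ implies that $\{x_n\}$ converges to $x$ in the O-metric topology.

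For the forward implication, the key observation is that the obstruction discussed after Proposition \ref{tough} disappears under $(C_1)$ and $(C_2)$. Recall that the substitution $A=B_{d_\om}(x,\epsilon)$ into the convergence statement (\ref{GreatGod}) was disallowed precisely because an open ball need not be an open set. But Theorem \ref{conly} guarantees that, when $\om$ satisfies $(C_1)$ and $(C_2)$, every open ball \emph{is} an open set. Hence I would proceed as follows: fix $\epsilon>0$ and consider the open ball $B_{d_\om}(x,\epsilon)$. Since $d_\om(x,x)=a$ we have $|d_\om(x,x)-a|=0<\epsilon$, so $x\in B_{d_\om}(x,\epsilon)$; and by Theorem \ref{conly} this ball is an open set containing $x$.

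Now I would apply topological convergence directly to this open set. Because $\{x_n\}$ converges to $x$ in $\mathcal{T}_{d_\om}$, statement (\ref{GreatGod}) applies with $A=B_{d_\om}(x,\epsilon)$, yielding an index $n_0\in\mathbb{N}$ such that $x_n\in B_{d_\om}(x,\epsilon)$ for all $n\geq n_0$, that is, $|d_\om(x_n,x)-a|<\epsilon$ for all $n\geq n_0$. As $\epsilon>0$ was arbitrary, this shows $\lim_{n\to\infty}d_\om(x_n,x)=a$, i.e.\ $x_n\xrightarrow{\text{O}}x$, completing the equivalence.

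I do not anticipate a genuine obstacle here, since Theorem \ref{conly} has already supplied the nontrivial ingredient (openness of open balls). The only point requiring care is to state explicitly that the property flagged after Proposition \ref{tough}, namely that for every $\epsilon>0$ there is an open set $A_\epsilon\ni x$ with $A_\epsilon\subset B_{d_\om}(x,\epsilon)$, is now satisfied trivially by taking $A_\epsilon=B_{d_\om}(x,\epsilon)$ itself; this is exactly what converts topological convergence into O-convergence.
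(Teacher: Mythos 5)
Your proposal is correct and follows essentially the same route as the paper: the backward implication is Proposition \ref{tough}, and the forward implication applies topological convergence to the open ball $B_{d_\om}(x,\epsilon)$, which Theorem \ref{conly} (under $(C_1)$ and $(C_2)$) guarantees is an open set. If anything, your version is slightly more careful than the paper's, which writes the conclusion as $d_\om(x_n,x)<\epsilon$ rather than the correct $|d_\om(x_n,x)-a|<\epsilon$ (equivalently $d_\om(x_n,x)<a+\epsilon$ in the upward case).
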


\begin{proof}
We know from Proposition \ref{tough} that $x_n \xrightarrow{\text{O}} x$ implies that $\{x_n\}$ converges to $x$ in the O-metric topology. Conversely, suppose that $\{x_n\}$ converges to $x$ in the O-metric topology. Then, for any $\epsilon>0$, given that $B_{d_\om}(x,r)$ is an open set, there is $n_0 \in \mathbb{N}$ such that $x_n \in B_{d_\om}(x,\epsilon)$, i.e., $d_\om(x_n,x)<\epsilon$, for all $n \geq n_0$. Therefore, $\lim_{n \to \infty} d_\om(x_n,x)=a$, and  $x_n \xrightarrow{\text{O}} x$. 
\end{proof}

\subsection{Conditions for uniqueness of O-limits}

The O-limit of an O-convergent sequence needs not be unique, as seen in Example \ref{olala}. We specify sufficient conditions under which the O-limit of a sequence, if it exists, is unique. 

\begin{proposition}
\label{UUU} Let $(X,d_\mathnormal{\mathbf{o}},a)$ be an $\om$-metric space (not necessarily $a$-upward) such that $\om$ satisfies the following conditions:
\begin{itemize}
\item[$(U_1)$]\label{U1} $\mathnormal{\mathbf{o}}$ is continuous at $(a,a)$; 
\item[$(U_2)$]\label{U2} $\mathnormal{\mathbf{o}}$ is nondecreasing in both variables
and either $\mathnormal{\mathbf{o}}(u,a)=a \Leftrightarrow u=a$ for all $u \in I_a$, or $%
\mathnormal{\mathbf{o}}(a,u)=a \Leftrightarrow u=a$ for all $u \in I_a$.
\end{itemize}
Then, the O-limit of an O-convergent sequence is unique  
\end{proposition}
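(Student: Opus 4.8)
The plan is to assume that a sequence $\{x_n\}$ O-converges simultaneously to two points $x$ and $y$, and to deduce $x=y$ by showing $d_\om(x,y)=a$ (which forces $x=y$ via axiom (i)). Writing $\alpha_n := d_\om(x_n,x)$ and $\beta_n := d_\om(x_n,y)$, the hypotheses give $\alpha_n \to a$ and $\beta_n \to a$. A preliminary observation I would record first is that $\om(a,a)=a$: this is immediate from $(U_2)$, since substituting $u=a$ into whichever biconditional is assumed (either $\om(u,a)=a \Leftrightarrow u=a$ or $\om(a,u)=a \Leftrightarrow u=a$) yields $\om(a,a)=a$.

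Next I would obtain an upper bound on $d_\om(x,y)$. Using symmetry (axiom (ii)) and the triangle $\om$-inequality (axiom (iii)), $d_\om(x,y) \le \om(d_\om(x,x_n),d_\om(x_n,y)) = \om(\alpha_n,\beta_n)$ for every $n$. Since $\alpha_n \to a$ and $\beta_n \to a$, the continuity of $\om$ at $(a,a)$ in $(U_1)$, together with $\om(a,a)=a$, gives $\om(\alpha_n,\beta_n) \to a$, and therefore $d_\om(x,y) \le a$.

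The main obstacle is that the space is not assumed $a$-upward, so the inequality $d_\om(x,y) \le a$ does not by itself yield $d_\om(x,y)=a$; I cannot simply combine it with an inequality $d_\om(x,y) \ge a$. The key idea to overcome this is to apply the triangle $\om$-inequality a second time, now to the self-distance: writing $t := d_\om(x,y) \in I_a$, axiom (i) gives $a = d_\om(x,x) \le \om(d_\om(x,y),d_\om(y,x)) = \om(t,t)$. Since I have already shown $t \le a$, the monotonicity in $(U_2)$ lets me sandwich $a \le \om(t,t) \le \om(t,a) \le \om(a,a) = a$ (raising the second then the first argument from $t$ to $a$). Hence every term equals $a$; in particular $\om(t,a)=a$, and the biconditional $\om(u,a)=a \Leftrightarrow u=a$ forces $t=a$, that is $d_\om(x,y)=a$ and $x=y$.

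Finally I would note that the argument is symmetric in the two alternatives of $(U_2)$: if instead $\om(a,u)=a \Leftrightarrow u=a$ is the assumed condition, the same squeeze is performed as $a \le \om(t,t) \le \om(a,t) \le \om(a,a)=a$, raising the first argument before the second, and concluding $\om(a,t)=a$ and $t=a$. I expect the only delicate point to be the first step's reliance on continuity precisely at $(a,a)$: since $\alpha_n,\beta_n$ approach $a$ from either side and need not equal $a$, it is continuity there, rather than mere monotonicity, that legitimizes passing to the limit in $\om(\alpha_n,\beta_n)$.
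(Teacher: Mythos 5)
Your proof is correct, and it differs from the paper's at the decisive step. The shared part: both you and the paper first extract $\om(a,a)=a$ from $(U_2)$ and then obtain $d_\om(x,y)\leq a$ by letting $n\to\infty$ in $d_\om(x,y)\leq \om(d_\om(x,x_n),d_\om(x_n,y))$, which is exactly where continuity at $(a,a)$ is used. The divergence is in how the equality $d_\om(x,y)=a$ is then forced. The paper writes two further triangle inequalities involving the sequence, $d_\om(x_n,x)\leq \om(d_\om(x,y),d_\om(x_n,y))$ and $d_\om(x_n,x)\leq \om(d_\om(x_n,y),d_\om(y,x))$, and passes to the limit in them to claim $a\leq \om(d_\om(x,y),a)$ and $a\leq \om(a,d_\om(x,y))$, before squeezing with monotonicity as you do. That limit passage is delicate: it implicitly needs continuity of $\om$ at the points $(d_\om(x,y),a)$ and $(a,d_\om(x,y))$, not just at $(a,a)$; and since the space is not assumed upward, $d_\om(x_n,y)$ may approach $a$ from above, where monotonicity gives the bound in the wrong direction. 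Your replacement step avoids limits entirely: setting $t=d_\om(x,y)$, the static inequality $a=d_\om(x,x)\leq \om(t,t)$ (axiom (iii) applied to the points $x,y,x$) combined with the purely monotone squeeze $a\leq \om(t,t)\leq \om(t,a)\leq \om(a,a)=a$ (or its mirror image $a\leq\om(t,t)\leq\om(a,t)\leq\om(a,a)=a$ for the other alternative of $(U_2)$) yields $\om(t,a)=a$ or $\om(a,t)=a$, and the biconditional finishes it. So your argument is not merely a variant but is strictly tighter under the stated hypotheses; it also makes clear that only one of the two biconditionals in $(U_2)$ is ever needed, and that after the single application of $(U_1)$ no further appeal to the sequence is required.
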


\begin{proof} Assume $(U_1)$ and $(U_2)$ holds. From condition $(U_2)$, $\om(a,a)=a$. Now, suppose $\lim_{n \to \infty} d_\om(x_n,x)=\lim_{n \to \infty} d_\om(x_n,y)= a$, where $x,y \in X$, and $\{x_n\}$ is a sequence of points in $X$.  By the triangle $\om$-inequality, for all $n\in \mathbb{N}$,
\begin{equation}
\left\{
\begin{array}{lll}
d_\om(x,y) &\leq& \om(d_\om(x,x_n),d_\om(x_n,y)). 
\\
d_\om(x_n,x) &\leq& \om(d_\om(x,y),d_\om(x_n,y))
\\
d_\om(x_n,x) &\leq& \om(d_\om(x_n,y),d_\om(y,x))
\end{array}
\right.
\end{equation}
Taking the limit as $n \to \infty$, since $\om$ is continuous at $(a,a)$, then $d_\om(x,y)  \leq a$, and
$$\left\{
\begin{array}{lcl}
a \leq \om(d_\om(x,y),a) \leq \om(a,a)=a
\\
a \leq \om(a,d_\om(x,y)) \leq \om(a,a)=a
\end{array}
\right.$$
Therefore, $\om(d_\om(x,y),a)=\om(a,d_\om(x,y))=a$, hence, by $(U_2)$, $d_\om(x,y))=a$ and $x=y$. The O-limit of any O-convergent sequence $\{x_n\}$ is therefore unique.
\end{proof}
\noindent 
One can easily verify that metric spaces,
b-metric spaces, multiplicative metric spaces, b-multiplicative metric
spaces, ultrametric spaces, $\theta$-metric spaces and p-metric spaces
satisfy conditions $(U_1)$ and $(U_2)$, hence the limit of a convergent
sequence in any of these spaces is unique. Also, the O-metric space in Example \ref{genl} satisfy conditions $%
(U_1)$ and $(U_2)$. 
\\
\\
Note that the function $\mathnormal{\mathbf{o}}$ is not unique for
any O-metric space $(X,d_\mathnormal{\mathbf{o}},a)$ given the fact  that any other
function which dominates $\mathnormal{\mathbf{o}}$ will serve the purpose. However, by virtue of Proposition \ref{UUU}, if $\tilde{\om}:I_a \times I_a \to \mathbb{R}_+$ is a function such that $\tilde{\om}(u,v) \geq \om(u,v)$ for all $u,v \in I_a$ , and $\om$ does not satisfy $(U_1)$ or $(U_2)$, then $\tilde{\om}$ cannot satisfy conditions $(U_1)$ and $(U_2)$ at the same time.
\\
\\
The following result shows that under conditions $(U_1)$ and $(U_2)$, every O-convergent sequence in an upward O-metric space is a Cauchy sequence.
\begin{proposition}
Let $(X,d_\mathnormal{\mathbf{o}},a)$ be an upward $\om$-metric space such that $\om(a,a)=a$ and $\om$ is continuous at $(a,a)$. Then, every O-convergent sequence in $X$ is a Cauchy sequence.
\end{proposition}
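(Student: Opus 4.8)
The plan is to show that if $\{x_n\}$ O-converges to some $x \in X$, i.e.\ $\lim_{n \to \infty} d_\om(x_n,x)=a$, then $\lim_{n,m \to \infty} d_\om(x_n,x_m)=a$, so that $\{x_n\}$ is Cauchy. The whole argument is a two-sided squeeze: the upward hypothesis supplies the lower bound, the triangle $\om$-inequality together with continuity at $(a,a)$ supplies the upper bound.

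First I would record the lower bound. Since $(X,d_\om,a)$ is an upward O-metric space, $I_a \subset [a,\infty)$, so $d_\om(x_n,x_m) \geq a$ for all $n,m$. Next I would invoke the triangle $\om$-inequality and symmetry (Definition \ref{psimetric}(ii),(iii)) to obtain, for all $n,m \in \mathbb{N}$,
\[
a \leq d_\om(x_n,x_m) \leq \om\big(d_\om(x_n,x),\,d_\om(x,x_m)\big)=\om\big(d_\om(x_n,x),\,d_\om(x_m,x)\big).
\]

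The heart of the proof is to push the right-hand side to $a$. Fix $\epsilon>0$. By continuity of $\om$ at $(a,a)$, there is $\delta>0$ such that $|u-a|<\delta$ and $|v-a|<\delta$ imply $|\om(u,v)-\om(a,a)|=|\om(u,v)-a|<\epsilon$, where I have used the hypothesis $\om(a,a)=a$. Because $d_\om(x_n,x) \to a$, there is $N \in \mathbb{N}$ with $|d_\om(x_n,x)-a|<\delta$ for all $n \geq N$; the same $N$ controls $d_\om(x_m,x)$. Hence for all $n,m \geq N$ the two arguments of $\om$ lie within $\delta$ of $a$, so $\om(d_\om(x_n,x),d_\om(x_m,x))<a+\epsilon$. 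Combined with the lower bound this gives $a \leq d_\om(x_n,x_m)<a+\epsilon$, i.e.\ $|d_\om(x_n,x_m)-a|<\epsilon$, for all $n,m \geq N$. Therefore $\lim_{n,m \to \infty} d_\om(x_n,x_m)=a$ and $\{x_n\}$ is a Cauchy sequence.

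I do not expect a serious obstacle here: the result is essentially a continuity-plus-squeeze argument, and the only point requiring care is that continuity at the single point $(a,a)$ must be applied to the double sequence $\big(d_\om(x_n,x),d_\om(x_m,x)\big)$, which is legitimate precisely because each coordinate converges to $a$ independently, letting a single $\delta$ govern both indices. Note that neither monotonicity of $\om$ nor the full strength of conditions $(U_1)$, $(U_2)$ is needed; only $\om(a,a)=a$, continuity at $(a,a)$, the upward property, and the triangle $\om$-inequality are used.
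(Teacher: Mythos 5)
Your proof is correct and follows essentially the same route as the paper: the upward property gives the lower bound $a \leq d_\om(x_n,x_m)$, the triangle $\om$-inequality gives the upper bound $\om(d_\om(x_n,x),d_\om(x,x_m))$, and continuity of $\om$ at $(a,a)$ together with $\om(a,a)=a$ squeezes this to $a$. The only difference is that you spell out the $\epsilon$--$\delta$ details that the paper's proof leaves implicit, which is a welcome clarification rather than a different argument.
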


\begin{proof}
The result follows from the triangle $\om$-inequality. Indeed, if $\{x_n\}$ is an O-convergent sequence and $x$ is its O-limit, then $a \leq d_\om(x_n,x_m) \leq \om(d_\om(x_n,x),d_\om(x,x_m))$ for all $n,m \in \mathbb{N}$. As $n,m \to \infty$,  $d_\om(x_n,x_m) \to a$, hence $\{x_n\}$ is a Cauchy sequence.
\end{proof}
\noindent
The $0$-upward O-metric space in Example \ref{AIMS} satisfies neither condition $(U_1)$ nor $(U_2)$: the function $\om$ is not continuous at $(0,0)$ and $\om$ is not nondecreasing in the second variable. 
\\
\\
We conclude this subsection by stating the following proposition which is a direct consequence of the definition of O-convergence.

\begin{proposition}
\label{sequential} Let $(X_1,d_{\mathnormal{\mathbf{o}}_1},a)$ and $(X_2,d_{%
\mathnormal{\mathbf{o}}_2},b)$ be two O-metric spaces, and $f:X_1 \to X_2$ a mapping from $X_1$ to $X_2$. The following are equivalent:

\begin{itemize}
\item[(i)] $f$ is sequentially continuous \footnote{Sequential continuity here is relative to O-convergence.} at $\tilde{x} \in X_1$, that is, for
any sequence $\{x_n\}_{n \in \mathbb{N}}$ of points in $X_1$, $x_n  \xrightarrow{\text{O}} 
\tilde{x} \implies f(x_n)  \xrightarrow{\text{O}}  f(\tilde{x})$;

\item[(ii)] $\forall \epsilon>0$, $\exists \delta>0:$ $|d_{\mathnormal{%
\mathbf{o}}_1}(x,\tilde{x})-a| < \delta \implies |d_{\mathnormal{\mathbf{o}}%
_2}(f(x),f(\tilde{x}))-b|<\epsilon$.
\end{itemize}
\end{proposition}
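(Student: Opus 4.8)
The plan is to prove the two implications separately, handling $(ii)\Rightarrow(i)$ directly and $(i)\Rightarrow(ii)$ by contraposition. The one conceptual point to keep in mind throughout is that, by the definition of O-convergence, the statement $x_n \xrightarrow{\text{O}} \tilde{x}$ in $X_1$ is precisely $\lim_{n\to\infty} d_{\om_1}(x_n,\tilde{x})=a$, which is in turn equivalent to $\lim_{n\to\infty}|d_{\om_1}(x_n,\tilde{x})-a|=0$; the analogous reformulation holds for O-convergence in $X_2$ with $b$ replacing $a$. Thus both (i) and (ii) can be expressed purely in terms of the nonnegative quantities $|d_{\om_1}(x,\tilde{x})-a|$ and $|d_{\om_2}(f(x),f(\tilde{x}))-b|$, and the argument reduces to the classical equivalence between sequential continuity and the $\epsilon$--$\delta$ condition for real-valued maps.

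For $(ii)\Rightarrow(i)$, I would fix a sequence $\{x_n\}$ with $x_n \xrightarrow{\text{O}}\tilde{x}$ and an arbitrary $\epsilon>0$. Condition (ii) supplies a $\delta>0$; since $|d_{\om_1}(x_n,\tilde{x})-a|\to 0$, there is $n_0\in\mathbb{N}$ such that $|d_{\om_1}(x_n,\tilde{x})-a|<\delta$ for all $n\ge n_0$, and (ii) then forces $|d_{\om_2}(f(x_n),f(\tilde{x}))-b|<\epsilon$ for all $n\ge n_0$. As $\epsilon>0$ was arbitrary, this shows $|d_{\om_2}(f(x_n),f(\tilde{x}))-b|\to 0$, that is, $f(x_n)\xrightarrow{\text{O}} f(\tilde{x})$, which is exactly (i).

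For $(i)\Rightarrow(ii)$, I would argue by contradiction, assuming (ii) fails. Then there is a fixed $\epsilon_0>0$ such that for every $\delta>0$ some point $x\in X_1$ satisfies $|d_{\om_1}(x,\tilde{x})-a|<\delta$ yet $|d_{\om_2}(f(x),f(\tilde{x}))-b|\ge \epsilon_0$. Specializing to $\delta=\tfrac{1}{n}$ for each $n\in\mathbb{N}$ yields a sequence $\{x_n\}$ with $|d_{\om_1}(x_n,\tilde{x})-a|<\tfrac{1}{n}$ and $|d_{\om_2}(f(x_n),f(\tilde{x}))-b|\ge\epsilon_0$. The first inequality gives $x_n\xrightarrow{\text{O}}\tilde{x}$, while the second keeps $|d_{\om_2}(f(x_n),f(\tilde{x}))-b|$ bounded away from $0$, so $f(x_n)$ cannot O-converge to $f(\tilde{x})$, contradicting (i). Hence (ii) must hold.

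This proof presents no genuine obstacle; the only step requiring a moment's care is the reformulation of O-convergence in the absolute-value form at the outset, after which both directions are routine sequence manipulations. It is worth noting that the selection of the points $x_n$ in the contrapositive step relies on countable choice, as is standard for such sequential characterizations of continuity.
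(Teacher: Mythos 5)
Your proof is correct. The paper in fact gives no proof of this proposition at all, merely asserting beforehand that it is ``a direct consequence of the definition of O-convergence''; your argument --- reducing both statements to real-sequence statements about $|d_{\mathnormal{\mathbf{o}}_1}(x,\tilde{x})-a|$ and $|d_{\mathnormal{\mathbf{o}}_2}(f(x),f(\tilde{x}))-b|$, proving (ii)$\Rightarrow$(i) directly and (i)$\Rightarrow$(ii) by contradiction with $\delta = 1/n$ --- is precisely the standard unwinding of that definition which the authors leave to the reader.
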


\begin{example}
Consider two O-metric spaces $(X_1,d_{\mathnormal{\mathbf{o}}_1},a)$ and $%
(X_2,d_{\mathnormal{\mathbf{o}}_2},b)$ such that $X_1=X_2=\mathbb{R}$, $a=1$%
, $b=0$, $\mathnormal{\mathbf{o}}_1(u,v)=\frac{u}{v}$ for all $u \geq 0$
and $v>0$, $\mathnormal{\mathbf{o}}_2(u,v)=(u+1)(v+1)$ for all $u,v \geq 0$, 
$d_{\mathnormal{\mathbf{o}}_1}(x,y)=e^{-|x-y|}$ and $d_{\mathnormal{\mathbf{o%
}}_2}(x,y)=\ln(1+|x-y|)$ for all $x,y \in \mathbb{R}$. The map $f:X_1 \to X_2
$ defined by $f(x)=x^2-2$ is sequentially continuous at any $\tilde{x} \in 
\mathbb{R}$ and  it satisfies tthe condition (ii) of the Proposition \ref{sequential}, with $%
\delta=1-e^{|\tilde{x}|-\sqrt{|\tilde{x}|^2+e^\epsilon-1}}$ for any $%
\epsilon>0$.
\end{example}

%
\noindent


\noindent
In the next section, we establish polygon inequalities for points in an O-metric space from the triangle $\om$-inequality, and define a natural generalization of series. 

\section{Polygon $\om$-inequalities and $\om$-series}

One of the beauties of the notion of O-metric spaces lies in the modification of the triangle inequality axiom of a metric space to accommodate other binary operations which are not necessarily associative. The triangle $\mathnormal{\mathbf{o}}$-inequality obtained becomes interesting to study when applied to more than three points. 
\\
\\
Let $(X,d_{\mathnormal{\mathbf{o}}},a)$ be an O-metric space.  
For $n \in \mathbb{N}$, let $x_0,  x_1, x_2,\ldots,x_{n+1}$ be a finite sequence of $n+2$ points in $X$. The triangle $\mathnormal{\mathbf{o}}$-inequality provides many upper bounds for $d_{\mathnormal{\mathbf{o}}}(x_0,x_{n+1})$ as functions of exactly $d_{\mathnormal{\mathbf{o}}}(x_0,x_1), d_{\mathnormal{\mathbf{o}}}(x_1,x_2), \ldots, d_{\mathnormal{\mathbf{o}}}(x_{n},x_{n+1})$ with each of $d_{\mathnormal{\mathbf{o}}}(x_0,x_1), d_{\mathnormal{\mathbf{o}}}(x_1,x_2), \ldots, d_{\mathnormal{\mathbf{o}}}(x_{n},x_{n+1})$  occurring exactly once, without interchanging the order. In fact, if we view $\om$ as a binary operation, the upper bounds exactly correspond to the expressions $d_{\mathnormal{\mathbf{o}}}(x_0,x_1)~ \mathnormal{\mathbf{o}}~  d_{\mathnormal{\mathbf{o}}}(x_1,x_2)  ~\mathnormal{\mathbf{o}}~  \cdots ~\mathnormal{\mathbf{o}}~  d_{\mathnormal{\mathbf{o}}}(x_{n},x_{n+1})$  which depends on how  parentheses are placed. The expression  $d_{\mathnormal{\mathbf{o}}}(x_0,x_1) ~\mathnormal{\mathbf{o}}~ d_{\mathnormal{\mathbf{o}}}(x_1,x_2) ~ \mathnormal{\mathbf{o}}~  \cdots ~ \mathnormal{\mathbf{o}} ~ d_{\mathnormal{\mathbf{o}}}(x_{n},x_{n+1})$ has at most $C_{n}$ values, where  $C_{n}:=\frac{1}{n+1} {{2n}\choose{n}}$ is the $n$-th Catalan number (see \cite{assoc1,assoc2}). %
The lemma below immediately follows:
\begin{lemma}\label{polygon} 
Denote by $\Omega_{n,a}$ (or simply $\Omega_n$ when no confusion arises) the set (of order at most equal to $C_n$) of functions $\Delta_n: I_a^{n+1} \to I_a$ such that $\Delta_n(t_0,t_1,\ldots,t_{n})=t_0 ~ \mathnormal{\mathbf{o}} ~ t_1 ~\mathnormal{\mathbf{o}} \cdots \mathnormal{\mathbf{o}} ~t_n$. The triangle $\mathnormal{\mathbf{o}}$-inequalities involving $n+2$ points $x_0,x_1,\ldots,x_{n+1}$ in 
an O-metric space $(X,d_{\mathnormal{\mathbf{o}}},a)$ become:
\begin{equation} \label{comp11}
d_{\mathnormal{\mathbf{o}}}(x_0,x_{n+1}) \leq \Delta_n\left(d_{\mathnormal{\mathbf{o}}}(x_0,x_1),d_{\mathnormal{\mathbf{o}}}(x_1,x_2),\ldots,d_{\mathnormal{\mathbf{o}}}(x_{n},x_{n+1})\right)   ~ \forall \Delta_n \in \Omega_n,
\end{equation}
or simply
\begin{equation} \label{comp12}
d_{\mathnormal{\mathbf{o}}}(x_0,x_{n+1}) \leq d_{\mathnormal{\mathbf{o}}}(x_0,x_1) ~\mathnormal{\mathbf{o}}~ d_{\mathnormal{\mathbf{o}}}(x_1,x_2) ~\mathnormal{\mathbf{o}} \cdots \mathnormal{\mathbf{o}} ~ d_{\mathnormal{\mathbf{o}}}(x_{n},x_{n+1}),
\end{equation}
and are called polygon $\mathnormal{\mathbf{o}}$-inequalities.
\end{lemma}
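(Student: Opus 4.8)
The plan is to prove (\ref{comp11}) by induction on $n$, the length of the chain of intermediate points, with the triangle $\om$-inequality (axiom (iii)) as the engine. The base case $n=1$ is immediate: for three points $x_0,x_1,x_2$ the only parenthesization of $t_0\,\om\,t_1$ is $\Delta_1(t_0,t_1)=t_0\,\om\,t_1$, so (\ref{comp11}) reads $d_\om(x_0,x_2)\le d_\om(x_0,x_1)\,\om\,d_\om(x_1,x_2)$, which is exactly axiom (iii). I would also record the trivial case $n=0$ (a single factor, where $\Delta_0$ is the identity and the inequality is an equality), since it serves as the boundary case for the degenerate sub-chains that appear in the inductive step.

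For the inductive step I would exploit the correspondence between the functions in $\Omega_n$ and the binary parenthesizations of $n+1$ ordered factors (equivalently, rooted binary trees with $n+1$ ordered leaves), which is what gives $|\Omega_n|\le C_n$. Fix $\Delta_n\in\Omega_n$. Its outermost $\om$ splits the factors at some index $k$ with $0\le k\le n-1$, so that $\Delta_n(t_0,\dots,t_n)=\Delta^L(t_0,\dots,t_k)\,\om\,\Delta^R(t_{k+1},\dots,t_n)$ with $\Delta^L\in\Omega_k$ and $\Delta^R\in\Omega_{n-1-k}$. Applying the triangle $\om$-inequality to the three points $x_0,x_{k+1},x_{n+1}$ yields $d_\om(x_0,x_{n+1})\le d_\om(x_0,x_{k+1})\,\om\,d_\om(x_{k+1},x_{n+1})$. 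The induction hypothesis, applied to the $k+2$ points $x_0,\dots,x_{k+1}$ with the parenthesization $\Delta^L$ and to the $n-k+1$ points $x_{k+1},\dots,x_{n+1}$ with $\Delta^R$ (each sub-chain being strictly shorter, and single-factor chains handled by the $n=0$ case), bounds the two arguments on the right by $L:=\Delta^L\bigl(d_\om(x_0,x_1),\dots,d_\om(x_k,x_{k+1})\bigr)$ and $R:=\Delta^R\bigl(d_\om(x_{k+1},x_{k+2}),\dots,d_\om(x_n,x_{n+1})\bigr)$, respectively.

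The one genuine obstacle is the final substitution: from $d_\om(x_0,x_{k+1})\le L$ and $d_\om(x_{k+1},x_{n+1})\le R$ I want to conclude $d_\om(x_0,x_{k+1})\,\om\,d_\om(x_{k+1},x_{n+1})\le L\,\om\,R=\Delta_n\bigl(d_\om(x_0,x_1),\dots,d_\om(x_n,x_{n+1})\bigr)$, which is precisely the statement that $\om$ preserves order in each slot. The triangle $\om$-inequality by itself does not permit chaining bounds through $\om$; what is needed is that $\om$ be nondecreasing in both variables. I therefore expect that the clean statement of Lemma \ref{polygon} tacitly relies on this monotonicity — it holds in every instance of Example \ref{referencemet} and is later imposed explicitly through conditions $(C_2)$ and $(U_2)$ — whereas a non-monotone $\om$ could in principle violate some parenthesized bound. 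Granting the monotonicity of $\om$, the substitution closes the induction and establishes (\ref{comp11}) for every $\Delta_n\in\Omega_n$ (at most $C_n$ of them), and (\ref{comp12}) is then merely the notational shorthand for (\ref{comp11}).
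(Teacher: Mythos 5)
Your induction is precisely the argument the paper leaves implicit: the paper offers no proof of Lemma \ref{polygon} at all, asserting just before its statement that the lemma ``immediately follows'' from the correspondence between parenthesizations of $t_0\,\om\,t_1\,\om\cdots\om\,t_n$ and iterated applications of the triangle $\om$-inequality, and your structural induction on the outermost split (root of the binary tree) is the correct formalization of that iteration. More importantly, the obstacle you isolate is real, not merely cautionary: inequality (\ref{comp11}) is genuinely false for general O-metric spaces if $\om$ is not monotone. Concretely, take $X=\{x_0,x_1,x_2,x_3\}$, $a=0$, $I_0=[0,\infty)$, $d_\om(x_i,x_j)=|i-j|$, and define $\om$ by $\om(u,0)=\om(0,u)=u$, $\om(1,1)=\tfrac{5}{2}$, $\om(1,2)=\om(2,1)=3$, $\om(1,3)=\om(3,1)=2$, $\om(2,3)=\om(3,2)=1$, $\om\bigl(1,\tfrac{5}{2}\bigr)=\om\bigl(\tfrac{5}{2},1\bigr)=\tfrac{1}{10}$, and $\om(u,v)=u+v$ for all remaining pairs. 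The triangle $\om$-inequality only ever evaluates $\om$ at pairs of actual distances, i.e.\ at points of $\{0,1,2,3\}^2$, and on those points the inequalities are easily verified, so $(X,d_\om,0)$ is a $0$-upward O-metric space; yet
\begin{equation*}
d_\om(x_0,x_3)=3 \;>\; \om\bigl(1,\om(1,1)\bigr)=\om\bigl(1,\tfrac{5}{2}\bigr)=\tfrac{1}{10},
\end{equation*}
so (\ref{comp11}) fails for both elements of $\Omega_2$ (the other parenthesization gives $\om\bigl(\tfrac{5}{2},1\bigr)=\tfrac{1}{10}$ as well). Thus the lemma, as stated, tacitly requires $\om$ to be nondecreasing in each variable, exactly as you suspected, and with that hypothesis your induction — base case the triangle $\om$-inequality itself, inductive step splitting $\Delta_n$ at its root into $\Delta^L\in\Omega_k$ and $\Delta^R\in\Omega_{n-1-k}$ and chaining the two bounds through the monotone $\om$ — is complete and correct. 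This does not damage the rest of the paper: every later invocation of Lemma \ref{polygon}, in Proposition \ref{contractionset}, Theorem \ref{most}, Theorem \ref{escape} and the b-metric corollaries, is made under the explicit assumption that $\om$ is nondecreasing in both variables; the hypothesis simply needs to be added to the statement of the lemma itself.
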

\noindent
\begin{example}
\label{detailex}
\textup{
Let $\om$ be a function defined for pairs of elements in the interval $I_0=[0,\infty)$ by $\om(u,v)=u+2v$. Then, for $0 \leq n \leq 3$, the set $\Omega_n$ is given by:
\begin{equation*}
\begin{array}{llllccccllll}
\Omega_0&=& \{\Delta_0\} & \mbox{ where }  \Delta_0(t_0)=t_0 
\\
\\
\Omega_1&=&\{\Delta_1\} & \mbox{ where }  \Delta_1(t_0,t_1)=\om(t_0,t_1)=t_0+2t_1 
\\
\\
\Omega_2&=& \{\Delta_2^1,\Delta_2^2\} & \mbox{ where } \Delta_2^1(t_0,t_1,t_2)=t_0 \om (t_1 \om t_2) = t_0+2t_1+4t_3 
\\
&&&   ~~~~~~~~~\Delta_2^2(t_0,t_1,t_2)=(t_0\om t_1) \om t_2 = t_0+2t_1+2t_2 
\\
\\
\Omega_3&=& \{\Delta_3^j:~ 1\leq j \leq 5\} & \mbox{ where } \Delta_3^1(t_0,t_1,t_2,t_3)=t_0 \om (t_1 \om (t_2 \om t_3))=t_0+2t_1+4t_2+8t_3
\\
&&&   ~~~~~~~~~  \Delta_3^2(t_0,t_1,t_2,t_3)=t_0 \om ((t_1 \om t_2) \om t_3)=t_0+2t_1+4t_2+4t_3
\\
&&&   ~~~~~~~~~  \Delta_3^3(t_0,t_1,t_2,t_3)=(t_0 \om t_1) \om (t_2 \om t_3)=t_0+2t_1+2t_2+4t_3
\\
&&&   ~~~~~~~~~  \Delta_3^4(t_0,t_1,t_2,t_3)=(t_0 \om (t_1 \om t_2)) \om t_3=t_0+2t_1+4t_2+2t_3
\\
&&&   ~~~~~~~~~  \Delta_3^5(t_0,t_1,t_2,t_3)=((t_0 \om t_1) \om t_2) \om t_3=t_0+2t_1+2t_2+2t_3
\end{array}
\end{equation*}
}
\end{example}
\noindent
It should be noted that if $\mathnormal{\mathbf{o}}$ is associative as a binary operation, then there is only one inequality (\ref{comp11} - \ref{comp12}). This is the case when $\mathnormal{\mathbf{o}}$ is the addition as in a metric space $(X,d)$, with $d(x_0,x_{n+1}) \leq \sum_{i=0}^n d(x_i,x_{i+1})$, or when $\om$ is the multiplication as in multiplicative metric spaces $(X,d_\times)$, with  $d_\times(x_0,x_{n+1}) \leq \prod_{i=0}^n d_\times(x_i,x_{i+1})$. 
\\
\\
In the non-associative case, it becomes necessary to define patterns that allow a function $\Delta_n$ to be expressed in function of some $\Delta_p$ and $\Delta_q$, where $p+q=n$. 

\subsection{Patterns and generalized series}

\begin{definition}\label{pattern1}
Let $\{\alpha_n\}_{n \in \mathbb{N}}$ be a sequence of integers such that $1 \leq \alpha_n \leq n-1$ for all $n \in \mathbb{N}$. A sequence $\{h_n\}_{n \in \mathbb{N}}$ of functions $h_n \in \Omega_{n-1}$ is said to follow the pattern of integers $\{\alpha_n\}_{n \in \mathbb{N}}$ if \begin{equation}
h_n(t_1,t_2,\ldots,t_n) = h_{\alpha_n}(t_1,t_2,\ldots,t_{\alpha_n}) ~\mathnormal{\mathbf{o}}~ h_{n-\alpha_n}(t_{\alpha_n+1},\ldots,t_n) \mbox{ for all $n \geq 2$.}
\end{equation}
\end{definition}
 \noindent
We consider the following example:
\begin{example}
Let $\{u_n\}$, $\{v_n\}$, $\{w_n\}$ and $\{z_n\}$ be sequences of functions 
defined as follow: If $n \in \mathbb{N}$ and $(t_1,t_2,\ldots,t_n) \in I_a^n$, then $u_1(t_1)=v_1(t_1)=w_1(t_1)=z_1(t_1)=t_1$, and for $n \geq 2$,
\begin{equation}\label{recursion}
\left\{
\begin{array}{lllll}
u_n(t_1,t_2,\ldots,t_n) &=& u_{n-1}(t_1,t_2,\ldots,t_{n-1}) ~\mathnormal{\mathbf{o}}~ t_n, 
\\
v_n(t_1,t_2,\ldots,t_n) &=& v_{p}\left(t_1,t_2,\ldots,t_{p}\right)~\mathnormal{\mathbf{o}} ~v_{q}\left(t_{p+1},t_{p+2},\ldots,t_{n}\right), 
\\
w_n(t_1,t_2,\ldots,t_n) &=& w_{2^{l-1}}\left(t_1,\ldots,t_{2^{l-1}}\right) ~\mathnormal{\mathbf{o}}~ w_{n-2^{l-1}}\left(t_{2^{l-1}+1},\ldots,t_n\right),  
\\
z_n(t_1,t_2,\ldots,t_n) &=& t_1~\mathnormal{\mathbf{o}}~ z_{n-1}(t_2,\ldots,t_n), 
\end{array}
\right.
\end{equation}
where $p=\ceil*{\frac{n}{2}}$, $q=\floor*{\frac{n}{2}}$, and $l=\ceil*{\log_2n}$. 
\\
\noindent
\textup{
The sequence $\{u_n\}$ follows the pattern of integers $\alpha_n=n-1$. It can also be labelled \textbf{FIFO (First In, First Out)} in that the arguments $t_1,t_2,\cdots,t_n$ are composed by $\mathnormal{\mathbf{o}}$ in increasing order of indices. The sequence $\{z_n\}$ on the other hand can be labelled \textbf{LIFO (Last In First Out)} and follows the pattern of integers $\alpha_n=1$. The sequence $\{v_n\}$ follows the pattern of integers $\alpha_n=\ceil*{\frac{n}{2}}$ and is labelled \textbf{AISO (All In, Split Out)} while the sequence $\{w_n\}$ follows the pattern of integers $\alpha_n=2^{\ceil*{\log_2n}-1}$, with the arguments $t_i$ split where the indice $i$ is the highest power of $2$. 
}

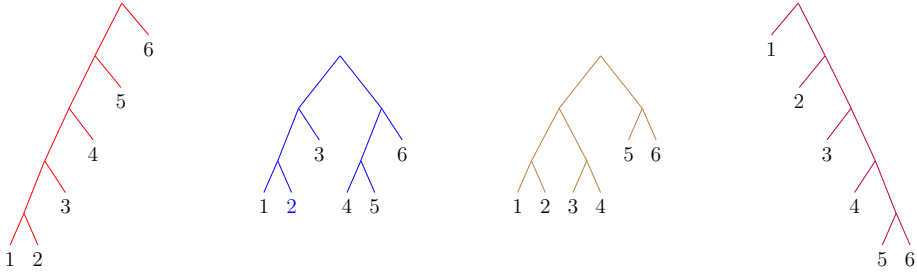
\begin{figure}[h]
\centering
\begin{minipage}{.2\textwidth}
\begin{tikzpicture}[scale=0.66]
{\color{red}
\Tree [ [ [ [ [ {\color{black} 1} {\color{black} 2} ] {\color{black} 3} ] {\color{black} 4} ] {\color{black} 5} ] {\color{black} 6} ]}
\end{tikzpicture}
\end{minipage}
\begin{minipage}{.2\textwidth}
\begin{tikzpicture}[scale=0.66]
{\color{blue}
\Tree [ [ [ {\color{black} 1} {\color{blue} 2} ] {\color{black} 3} ] [ [ {\color{black} 4} {\color{black} 5} ] {\color{black} 6} ] ]}
\end{tikzpicture}
\end{minipage}
\begin{minipage}{.2\textwidth}
\begin{tikzpicture}[scale=0.66]
{\color{brown}
\Tree  [ [ [ {\color{black} 1} {\color{black} 2} ] [ {\color{black} 3} {\color{black} 4} ] ] [ {\color{black} 5} {\color{black} 6} ] ]}
\end{tikzpicture}
\end{minipage}
\begin{minipage}{.2\textwidth}
\begin{tikzpicture}[scale=0.66]
{\color{purple}
\Tree [ {\color{black} 1} [ {\color{black} 2} [ {\color{black} 3} [ {\color{black} 4} [ {\color{black} 5} {\color{black} 6} ] ] ] ] ]}
\end{tikzpicture}
\end{minipage}
\caption{Binary trees for $u_6$, $v_6$, $w_6$ and $z_6$, from left to right.}
\label{onef}
\end{figure}
\end{example}
\noindent
In general, for an infinite sequence of points of real numbers, one can define $\mathnormal{\mathbf{o}}$-series as a way of composing successively the terms following a given pattern. More precisely, we have the following definition:

\begin{definition}\textup{($\om$-series)
Consider a sequence $\{t_n\}_{n \geq 0}$ of points in $I_a$ and an operation $\mathnormal{\mathbf{o}}: I_a \times I_a \to [0,\infty)$. The terms of the sequence can be composed successively starting from $t_0$ via the operation $\mathnormal{\mathbf{o}}$ as follows:
\begin{equation}
\left\{
\begin{array}{lll}
\omega^0&=& t_0 \\
\omega^1 &=& \mathnormal{\mathbf{o}}(t_0,t_1) \\
\omega^n &=& h_{n+1}(t_0,t_1,t_2,\ldots,t_n), ~~ h_{n+1} \in \Omega_n, ~~n \geq 2.
\end{array}
\right.
\end{equation}
The sequence $\{\omega^n\}_{n \geq 0}$ is defined to be the \emph{sequence of partial compositions of} $\{t_n\}_{n \geq 0}$ following the pattern of functions $\{h_n\}_{n \in \mathbb{N}}$ and is denoted $\omega^n= \som{0}{n} t_i.$ If $\{\omega^n\}_{n \in \mathbb{N}}$ converges, we say that $\{t_n\}_{n \geq 0}$ is \emph{composable} and denote $\lim_{n \to \infty} \omega^n$ by  $\som{0}{\infty} t_i$. 
\\
In general, the expression $\som{0}{\infty} t_i$ is called an infinite \emph{generalized series} (or \emph{$\mathnormal{\mathbf{o}}$-series})  \emph{following the  pattern of functions} $\{h_n\}_{n \in \mathbb{N}}$ and $\{t_n\}_{n \geq 0}$ is called the \emph{sequence of terms}, whether it is composable or not. If  $\{t_n\}_{n \geq 0}$ is composable, the generalized series  $\som{1}{\infty} t_i$ is said to converge; else, it is said to diverge.}
\end{definition}
\noindent
Given a sequence $\{t_n\}_{n \geq 0}$ of points in $I_a$, the $n$-th term $\omega^n$ of the sequence of partial compositions of $\{t_n\}_{n \geq 0}$ can be computed up to $C_n$ ways ($C_n$ being the $n$-th Catalan number). The following example serves as illustration:

\begin{example}
\textup{
Consider $I_0=[0,\infty)$ and $\om$ as in Example \ref{detailex}. If $t_n=n$ for any $n \geq 0$, then $\som{0}{3}t_i$ can be computed in $C_3=5$ ways:
\begin{equation*}
\som{0}{3}t_i=\som{0}{3}i=\Delta_3^j(0,1,2,3)=\left\{
\begin{array}{lll}
34, & \mbox{ for } j=1
\\ 
22, & \mbox{ for } j=2 
\\
18, & \mbox{ for } j=3
\\
16, & \mbox{ for } j=4
\\
12, & \mbox{ for } j=5.
\end{array}
\right.
\end{equation*}
}
\end{example}
  \noindent
It is therefore necessary to specify the pattern followed by the function $h_{n+1} \in \Omega_n$ (or $\Delta_n$) used to compute the partial composition $\som{0}{n} t_i$. To this end, we adopt the following definition:
\begin{definition}
Let $\{\alpha_n\}_{n \in \mathbb{N}}$ be a sequence of integers such that $1 \leq \alpha_n \leq n-1$ for all $n \in \mathbb{N}$.
If an $\om$-series $\som{0}{\infty} t_i$ following the pattern of functions $\{h_n\}_{n \in \mathbb{N}}$ is such that  $\{h_n\}_{n \in \mathbb{N}}$ follows the pattern of integers $\{\alpha_n\}_{n \in \mathbb{N}}$ in the sense of Definition \ref{pattern1},  $\som{0}{\infty} t_i$ is also said to follow the pattern of integers $\{\alpha_n\}_{n \in \mathbb{N}}$. 
\end{definition}
\noindent
Patterns are not necessary when the operation is $\mathnormal{\mathbf{o}}$ is associative as illustrated in the following example.
\begin{example}
\textup{
Let $\{\omega^n\}_{n \geq 0}$ be the sequence of partial compositions of a sequence $\{t_n\}_{n \geq 0}$ of real numbers in the interval $[a,\infty)$, with $\mathnormal{\mathbf{o}}:[a,\infty) \times [a,\infty) \to [a,\infty)$ a function. 
\begin{enumerate}
\item If $a=0$ and $\mathnormal{\mathbf{o}}(u,v)=u+v$ for all $u,v \geq 0$, then whatever the pattern followed, $\mathnormal{\mathbf{o}}$-series are series in the usual sense, i.e., $\som{0}{n} t_i=\displaystyle \sum_{i=0}^n t_i$ for $n \geq 0$;
\item 
If $\mathnormal{\mathbf{o}}(u,v)=\max\{u,v\}$ for all $u,v \geq a$, the sequence $\{\omega^n\}_{n \geq 0}$ of partial compositions of $\{t_n\}_{n \in \geq 0}$ following any pattern of functions is such that
$\omega^n=\som{0}{n} t_i =\max\{t_0,t_2,\ldots,t_n\}$, which means that $\som{0}{\infty} t_i=\lim_{n \to \infty} t_n$ if the sequence of terms $\{t_n\}_{n \in \mathbb{N}}$ is nondecreasing and  $\som{0}{\infty} t_i=t_0$ if $\{t_n\}_{n \in \mathbb{N}}$ is non-increasing.
\item If $a=1$ and $\mathnormal{\mathbf{o}}(u,v)=uv$ for all $u, v \geq 1$, then $\som{0}{n} t_i =\displaystyle \prod_{i=0}^n t_i$ for $n \geq 0$.
\end{enumerate}
}
\end{example}
\noindent
In general, when $\om$ satisfies conditions $(E_1)$ and $(E_2)$ with a function $\lambda$, then $\om$ is associative and given a sequence $\{t_n\}$ of numbers greater than or equal to $a$, we have for all $n \geq 0$,
\begin{equation}\label{bonsoir}
\som{0}{n} t_i=\lambda^{-1} \left(\displaystyle \sum_{i=0}^n \lambda(t_i) \right).
\end{equation}
\\
\\
In the next subsection, we consider a case when $\om$ is not associative.

\subsection{Polygon $\om$-inequalities in b-metric spaces}

Motivated by the relaxed triangle inequality in a b-metric space $(X,d,s)$, $s\geq 1$, which is a $0$-upward O-metric space, we consider the case where $a=0$ and $\mathnormal{\mathbf{o}}$ is defined for pairs $(u,v)$ of elements in $[0,\infty)$ by $\mathnormal{\mathbf{o}}(u,v)=s(u+v)$.  
\\
\\
Let $\{\omega^n\}_{n \geq 0}$ be the sequence of partial compositions of a sequence $\{t_n\}_{n \geq 0} \subset [0,\infty)$, with $\om:[0,\infty) \times [0,\infty) \to [0,\infty)$ defined by $\om(u,v)=s(u+v)$ for some $s >0$. 
\\
\\
If the $\om$-series follows the pattern of integer $\{1\}_{n \in \mathbb{N}}$, then for $n \geq 2$, $\som{0}{n} t_i=z_{n+1}(t_0,t_1,\ldots,t_n)$, where $\{z_n\}$ is defined as in (\ref{recursion}):
\begin{equation}\label{zn}
\begin{array}{lcl}
\som{0}{n} t_i=z_{n+1}(t_0,t_1,\ldots,t_n) &=& s[t_0+z_n(t_1,\ldots,t_{n})]
\\
&=& st_0 + sz_n(t_1,\ldots,t_{n})
\\
&=& st_0+ s^2[t_1+z_{n-1}(t_2,\ldots,t_n)]
\\
&=& st_0+ s^2t_1 + s^2 z_{n-1}(t_2,\ldots,t_n)
\\
&\vdots&
\\
&=&\displaystyle \sum_{i=1}^{n-1}s^i t_{i-1}+s^{n-1}z_2(t_{n-1},t_{n})
\\
& = & \displaystyle \sum_{i=1}^{n-1}s^i t_{i-1}+ s^nt_{n-1}+s^n t_{n}
\\
&=&\displaystyle \sum_{i=1}^{n}s^i t_{i-1}+ s^n t_{n}.
\end{array}
\end{equation}
Therefore, for any $n \in \mathbb{N}$, $\som{0}{n} t_i=z_{n+1}(t_0,t_1,\ldots,t_n)=\displaystyle \sum_{i=1}^{n}s^i t_{i-1}+s^n t_n.$
\\
\\
Similarly, if the $\om$-series follows the pattern of integers $\{n-1\}_{n \in \mathbb{N}}$, then for $n \in \mathbb{N}$,
\begin{equation}\label{Godshelp}
\som{0}{n} t_i=u_{n+1}(t_0,t_1,\ldots,t_n)=s^{n}t_0+\sum_{i=1}^{n}s^i t_{n-i+1},
\end{equation}
where $\{u_n\}$ is defined as in (\ref{recursion}).
\\%
\\%
Now, consider the $\om$-series following the pattern of integers $\{2^{\ceil*{\log_2 n}-1}\}_{n \in \mathbb{N}}$. Then
$\som{1}{n} t_i=w_{n}(t_1,t_2,\cdots,t_n)$ for all $n \in \mathbb{N}$, where $\{w_n\}$ is defined as in (\ref{recursion}).
\\
If $l=l(n)=\ceil*{\log_2(n)}$, with $n \geq 2$, then $1 \leq 2^{l-1}<n \leq 2^l$ and
\begin{equation}\label{GodisGood}
w_{n}(t_1, t_2, \ldots, t_{n}) =s\left(w_{2^{l-1}}\left(t_1,\ldots,t_{2^{l-1}}\right)+w_{n-2^{l-1}}\left(t_{2^{l-1}+1},\ldots,t_{n}\right) \right).
\end{equation}
\noindent
By a simple recursion, for any $r \geq 1$, 
\begin{equation}\label{GodGood}
\begin{array}{lcl} w_{2^r}(t_1,t_2,\ldots,t_{2^r}) &=& s[w_{2^{r-1}}(t_1,t_2,\ldots,t_{2^{r-1}})+w_{2^{r-1}}(t_{2^{r-1}+1},t_{2^{r-1}+2},\ldots,t_{2^{r}})]
\\
&=& s^2\left[w_{2^{r-2}}(t_1,t_2,\ldots,t_{2^{r-2}})+ w_{2^{r-2}}(t_{2^{r-2}+1},t_{2^{r-2}+2},\ldots,t_{2^{r-1}})\right.
\\
&&
+\left. w_{2^{r-2}}(t_{2^{r-1}+1},\ldots,t_{3 \times 2^{r-2}})+ w_{2^{r-2}}(t_{3 \times 2^{r-2}+1},\ldots,t_{2^{r}}) \right]
\\
&\vdots&
\\
&=&\displaystyle s^r\sum_{i=1}^{2^r}w_1(t_i)=s^r\sum_{i=1}^{2^r}t_i. \end{array}
\end{equation}
\noindent
Thus 
$\left\{\begin{array}{lll}
w_{2^{l-1}}\left(t_1,\ldots,t_{2^{l-1}}\right) = \displaystyle  s^{l-1}\sum_{i=1}^{2^{l-1}}t_i  \\
w_{n-2^{l-1}}\left(t_{2^{l-1}+1},\ldots,t_{n}\right) \leq w_{2^{l-1}}\left(t_{2^{l-1}+1},\ldots,t_{n},0,0,\ldots,0\right) = \displaystyle s^{l-1}\sum_{i=2^{l-1}+1}^{n}t_i. 
\end{array}\right.$ 
\\
Hence, for $n \geq 2$,
\begin{equation}\label{wn}
w_{n}(t_1, t_2, \ldots, t_{n}) \leq s\left[s^{l-1}\sum_{i=1}^{2^{l-1}}t_i+s^{l-1}\sum_{i=2^{l-1}+1}^{n}t_i\right]=s^l\sum_{i=1}^{n} t_i.
\end{equation}
In fact, repeating the processes in (\ref{GodisGood}) and (\ref{GodGood}), we have for $n$ sufficiently large,
\begin{equation}\label{praiz}
\begin{array}{lcl}
w_{n}(t_1, t_2, \ldots, t_{n}) &=&s\left(w_{2^{l(n)-1}}\left(t_1,\ldots,t_{2^{l(n)-1}}\right)+w_{n-2^{l(n)-1}}\left(t_{2^{l(n)-1}+1},\ldots,t_{n}\right) \right)
\\
&=& \displaystyle s\left(s^{l(n)-1}\sum_{i=1}^{2^{l(n)-1}}t_i  +w_{n-2^{l(n)-1}}(t_{2^{l(n)-1}+1},\ldots,t_n) \right)
\\
&=& \displaystyle s\left(s^{l_0-1}\sum_{i=1}^{{n_1}}t_i  +w_{n-n_1}(t_{n_1+1},\ldots,t_n) \right), ~~l_0=l(n), ~ n_1=2^{l_0-1},
\\
&=& \displaystyle s^{l_0}\sum_{i=1}^{{n_1}}t_i  +sw_{n-n_1}(t_{n_1+1},\ldots,t_n)
\\
&=&\displaystyle s^{l_0}\sum_{i=1}^{n_1}t_i +s\left[ s^{l_1}\sum_{i=n_1+1}^{n_2}t_i  +sw_{n-n_2}(t_{n_2+1},\ldots,t_n) \right]
\\
&& \mbox{ where }  l_1=l(n-n_1), ~ n_2=n_1+2^{l_1-1},
\\
&=&\displaystyle s^{l_0}\sum_{i=1}^{n_1}t_i +s^{1+l_1}\sum_{i=n_1+1}^{n_2}t_i  +s^2w_{n-n_2}(t_{n_2+1},\ldots,t_n) 
\\
&\vdots&
\\
&=& \displaystyle s^{l_0}\sum_{i=1}^{n_1}t_i +s^{1+l_1}\sum_{i=n_1+1}^{n_2}t_i  +\cdots+ s^{r-1+l_{r-1}}\sum_{i=n_{r-1}+1}^{n_{r}}t_i
\\&& +s^r w_{n-n_r}(t_{n_r+1},\ldots,t_n), 
\end{array}
\end{equation}
where $r \in \mathbb{N}$ is such that $n-n_r \geq 1$, and $l_j=\ceil*{\log_2(n-n_j)}$ and $n_{j+1}=n_{j}+2^{l_j-1}$ for $j \in [0,r]$, with $n_0=0$.
\\
The sequence $(n_j)$ of integers is strictly increasing and bounded above by $n$ hence finite: there is $N\in \mathbb{N}$ such that $N-1=\max\{r \in \mathbb{N}: ~ n-n_r \geq 2\}$. By definition of $N$, $n-n_{N}<2$. Suppose $n=n_N$. Then $l_{N-1}=\ceil*{\log_2(n-n_{N-1})}=\ceil*{\log_2(n_N-n_{N-1})}=l_{N-1}-1$, a contradiction. Thus $n-n_N=1$ so $n_N=n-1$. Therefore, from (\ref{praiz}), taking $r=N$, the last term of the inequality becomes $s^Nw_1(t_n)=s^N t_n$ so that:
\begin{equation}\label{ghen}
\begin{array}{lcl}
\som{1}{n} t_i &=&w_{n}(t_1,t_2,\cdots,t_n)
= \displaystyle \sum_{r=0}^{N-1}\left( s^{r+l_r}\sum_{i=n_r+1}^{n_{r+1}}t_i \right)+s^N t_n
\\
&=& \displaystyle \sum_{r=0}^{N}\left( s^{r+l_r}\sum_{i=n_r+1}^{n_{r+1}}t_i \right),
\end{array}
\end{equation}
where $(n_r)_{0 \leq r \leq N+1}$ and $(l_r)_{0 \leq r \leq N}$ are such that:
\begin{equation}\label{Godmercy}
\left\{
\begin{array}{lcl}
n_0 &=& 0
\\
n_{j+1} &=& n_j+2^{l_j-1}=\displaystyle \sum_{r=0}^j2^{l_r-1}, ~~ j \in \{0,1,\ldots,N-1\}
\\
N-1&=&\max\{r \in \mathbb{N}: ~ n-n_r \geq 2\}
\\
l_j&=&\ceil*{\log_2(n-n_j)}, ~~ j \in \{0,1,\ldots,N\}
\\
n_{N+1}&=& n_N +1 = n.
\end{array}
\right.
\end{equation}
The sequence $(n_j)$ in (\ref{Godmercy}) provides the binary representation of $n \in \mathbb{N}$. Indeed, from (\ref{Godmercy}),
$$n=n_N +1=  \displaystyle \sum_{r=0}^{N-1} 2^{l_r-1}+1=2^{l_0-1}+2^{l_1-1}+\cdots+2^{l_{N-1}-1}+1.$$
Therefore, $n=\displaystyle \sum_{j=0}^{k} a_j 2^j$, where $k=l_0-1$, and 
$a_j =\left\{ \begin{array}{lll}  1, &\mbox{ if } j\in\{0, l_0 -1, l_1-1, \cdots, l_{N-1}-1\} \\ 0, &\mbox{ elsewhere.} \end{array}  \right.$
\\
\\
As application, we determine some polygon (relaxed) inequalities that hold in b-metric spaces in the following proposition:
\begin{proposition}[Polygon inequalities in a b-metric space]
Let $(X,d,s)$ be a b-metric space, with $s \geq 1$. Then given $n+2$ points $x_0,x_1,\cdots,x_{n+1}$, where $n \in \mathbb{N}$, the following polygon inequalities hold:
\begin{equation}\label{nameit}
d(x_0,x_{n+1}) \leq \sum_{i=1}^{n+1} a_i d(x_{i-1},x_i)
\end{equation}
with $(a_i)_{1 \leq i \leq n+1}$ a sequence of nonnegative real numbers such that either of the following hold:
\begin{enumerate}
\item\label{checke} There exist $p \neq q \in \{1, 2, \ldots, n+1\}$ such that $a_p=a_q=s^n$, with the other $a_i's$ distinct and equal to some power $s^r$ of $s$, with $1 \leq r <n-1$.
\item\label{checkeq} The $a_i$'s are constant, $a_i=K$, where $K=\dfrac{1}{n+1}\left(\dfrac{2s^{n+1}-s^n-s}{s-1} \right)$ or $K=s^{\ceil*{\log_2(n+1)}}$.
\end{enumerate}
\end{proposition}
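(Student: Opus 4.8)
The plan is to read every polygon inequality off the $\om$-series machinery of the previous subsection, specialised to $\om(u,v)=s(u+v)$. Writing $t_{i}=d(x_{i},x_{i+1})$ for $0\le i\le n$, Lemma \ref{polygon} gives $d(x_0,x_{n+1})\le \som{0}{n} t_i$ for \emph{every} pattern of functions, so each of the three claimed coefficient patterns will be produced by choosing a convenient pattern of integers and invoking the closed forms already computed in (\ref{zn}), (\ref{Godshelp}) and (\ref{wn}). The three items of the statement correspond, respectively, to the LIFO (or FIFO) pattern, the balanced pattern, and an averaging of the former.

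For assertion (\ref{checke}) I would use the LIFO pattern $\{1\}_{n\in\mathbb{N}}$. Formula (\ref{zn}) reads $\som{0}{n} t_i=\sum_{i=1}^{n}s^{i}t_{i-1}+s^{n}t_{n}$, so substituting $t_{i-1}=d(x_{i-1},x_i)$ yields (\ref{nameit}) with $a_i=s^{i}$ for $1\le i\le n$ and $a_{n+1}=s^{n}$. Hence $a_n=a_{n+1}=s^{n}$, while $a_1,\dots,a_{n-1}$ are the distinct powers $s^{1},\dots,s^{n-1}$, which is exactly condition (\ref{checke}) with $p=n$ and $q=n+1$. The FIFO pattern together with formula (\ref{Godshelp}) gives the mirror-image assignment $a_1=a_2=s^{n}$ and $a_3,\dots,a_{n+1}=s^{n-1},\dots,s^{1}$, serving equally well. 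This step is routine once the substitution is made.

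For the value $K=s^{\ceil*{\log_2(n+1)}}$ in (\ref{checkeq}) I would use the pattern $\{2^{\ceil*{\log_2 n}-1}\}_{n\in\mathbb{N}}$ applied to the $n+1$ terms $t_0,\dots,t_n$. The uniform bound (\ref{wn}), rewritten for these $n+1$ arguments, gives $\som{0}{n} t_i\le s^{\ceil*{\log_2(n+1)}}\sum_{i=1}^{n+1}d(x_{i-1},x_i)$, which is (\ref{nameit}) with the announced constant coefficient. The remaining value $K=\frac{1}{n+1}\bigl(\frac{2s^{n+1}-s^{n}-s}{s-1}\bigr)$ is precisely the arithmetic mean of the $n+1$ coefficients of assertion (\ref{checke}), since $s+s^{2}+\cdots+s^{n-1}+s^{n}+s^{n}=\frac{2s^{n+1}-s^{n}-s}{s-1}$. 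Because a single parenthesisation assigns to $t_i$ the coefficient $s^{\delta_i}$, where $\delta_i$ is the depth of the $i$-th leaf of the associated ordered binary tree, no single polygon inequality can carry the (generally non-power) constant $K$; so an averaging of several inequalities is unavoidable. The natural approach is to exhibit a family of $n+1$ valid polygon inequalities whose coefficient vectors are rearrangements of the multiset $\{s,s^{2},\dots,s^{n-1},s^{n},s^{n}\}$, arranged so that each edge $d(x_{i-1},x_i)$ receives each of these values exactly once across the family; summing the $n+1$ inequalities and dividing by $n+1$ then replaces every coefficient by the common mean $K$.

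The main obstacle is exactly the construction of this balanced family. A prescribed rearrangement of the coefficients corresponds to a genuine polygon inequality only when it arises, through $\delta\mapsto s^{\delta}$, as the leaf-depth profile of an \emph{ordered} binary tree on $n+1$ leaves, and not every cyclic shift of the LIFO profile $(1,2,\dots,n-1,n,n)$ is realisable in this way (a leaf of depth $1$ must be the leftmost or rightmost leaf, for instance). I would therefore try to generate the family directly from the recursion (\ref{recursion}), moving the repeated top depth $s^{n}$ along the chain while keeping each intermediate profile tree-realisable, and I would regard the verification that the resulting column sums are all equal to $\frac{2s^{n+1}-s^{n}-s}{s-1}$ as the crux of the whole proposition; this balancing requirement is where I expect the argument to be most delicate.
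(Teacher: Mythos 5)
Your treatment of condition (\ref{checke}) and of the constant $K=s^{\ceil*{\log_2(n+1)}}$ is exactly the paper's own argument: the paper derives (\ref{mercy1}) from the pattern $z_{n+1}$, (\ref{mercy2}) from $u_{n+1}$, and (\ref{finality}) from $w_{n+1}$, just as you propose. (A side remark that applies to you and the paper alike: condition (\ref{checke}) must read $1\le r\le n-1$ rather than $1\le r<n-1$, since the $n-1$ remaining coefficients cannot be distinct powers drawn from only $n-2$ admissible ones.) The genuine issue is the constant $K=\frac{1}{n+1}\bigl(\frac{2s^{n+1}-s^n-s}{s-1}\bigr)$, and the step you isolated as the crux and left open is not merely delicate: it is impossible, and that part of the statement is false. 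Take $n=2$, $s>1$, and for small $\epsilon>0$ the four-point space with $d(x_0,x_1)=d(x_2,x_3)=\epsilon$, $d(x_1,x_2)=1$, $d(x_0,x_2)=d(x_1,x_3)=s(1+\epsilon)$, $d(x_0,x_3)=s^2(1+\epsilon)+s\epsilon$. All twelve relaxed triangle inequalities with constant $s$ hold (four of them with equality), so this is a b-metric space. Any inequality $d(x_0,x_3)\le a_1d(x_0,x_1)+a_2d(x_1,x_2)+a_3d(x_2,x_3)$ valid in every b-metric space with constant $s$ must hold here for every $\epsilon$, and letting $\epsilon\to0$ forces $a_2\ge s^2$. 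Consequently every family of three valid inequalities has middle-column sum at least $3s^2>s+2s^2=\sum_{i=1}^{n}s^i+s^n$, so the balanced family you were seeking cannot exist; worse, the constant vector itself fails, since $K=\frac{1}{3}(2s^2+s)<s^2$ while $d(x_0,x_3)\to s^2$ and $K\sum_{i=1}^{3}d(x_{i-1},x_i)\to K$ as $\epsilon \to 0$.

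You should also know that the paper's proof commits precisely the error you warned against. It asserts that ``rearranging the points $x_i$'' turns (\ref{mercy1})--(\ref{mercy2}) into inequality (\ref{nameit}) for an arbitrary permutation of the coefficient multiset, and then sums $n+1$ permuted copies to obtain (\ref{finalite}). But permuting the intermediate points of the chain yields an inequality in the distances $d(x_{\pi(i)},x_{\pi(i+1)})$ between non-consecutive points, not in the original edges $d(x_{i-1},x_i)$; as you observed, the only coefficient profiles supplied by Lemma \ref{polygon} are leaf-depth profiles of ordered binary trees (together with anything dominating a convex combination of these), and for an interior edge such a profile never drops below $s^2$. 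So your proposal is correct and complete exactly where the proposition is correct --- condition (\ref{checke}) and $K=s^{\ceil*{\log_2(n+1)}}$ --- and the part you could not finish cannot be finished: the averaged constant in condition (\ref{checkeq}) is wrong, at least for $n=2$ and every $s>1$.
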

\begin{proof}
Under the conditions of the proposition, the polygon inequality (\ref{comp11}) holds for any $\Delta_n \in \Omega_n:$
$$d_{\mathnormal{\mathbf{o}}}(x_0,x_{n+1}) \leq \Delta_n\left(d_{\mathnormal{\mathbf{o}}}(x_0,x_1),d_{\mathnormal{\mathbf{o}}}(x_1,x_2),\ldots,d_{\mathnormal{\mathbf{o}}}(x_{n},x_{n+1})\right).$$
If we let $\Delta_n=z_{n+1}$, where $(z_n)$ is the recursion defined in (\ref{recursion}), then from (\ref{zn}), we obtain:
\begin{equation}\label{mercy1}
d(x_0,x_{n+1}) \leq \sum_{i=1}^{n}s^id(x_{i-1},x_i)+ s^nd(x_{n},x_{n+1}).
\end{equation}
If we let $\Delta_n=u_{n+1}$, where $(u_n)$ is the recursion defined in (\ref{recursion}), then from (\ref{Godshelp}), we obtain:
\begin{equation}\label{mercy2}
d(x_0,x_{n+1}) \leq s^nd(x_{0},x_{1})+\sum_{i=1}^{n}s^i d(x_{n-i+1},x_{n-i+2}).
\end{equation}
Rearranging the points $x_i$, $1 \leq i \leq n$, we obtain $d(x_0,x_{n+1}) \leq \displaystyle \sum_{i=1}^{n+1} a_i d(x_{i-1},x_i)$, where $(a_i)_{1 \leq i \leq n+1}$  satisfies condition \ref{checke}. 
\\
In fact, if one sums $n+1$ of such inequalities $d(x_0,x_{n+1}) \leq \displaystyle \sum_{i=1}^{n+1} a_{\sigma_j(i)} d(x_{i-1},x_i)$, $1 \leq j \leq n+1$, with permutations  $\sigma_j \in S_{n+1}$ chosen such that for each $i$, the $\sigma_j(i)$ are distinct, then:
$$(n+1)d(x_0,x_{n+1}) \leq \left(\sum_{i=1}^{n}s^i +s^n \right)\sum_{i=1}^{n+1} d(x_{i-1},x_i).$$
Therefore, 
\begin{equation}\label{finalite}
\left\{
\begin{array}{lll}
\displaystyle d(x_0,x_{n+1}) \leq \sum_{i=1}^{n+1} K d(x_{i-1},x_i), \mbox{ where:}
\\
K=\displaystyle \dfrac{1}{n+1}\left(\sum_{i=1}^{n}s^i +s^n \right)=\dfrac{1}{n+1}\left(\dfrac{2s^{n+1}-s^n-s}{s-1} \right).
\end{array}
\right.
\end{equation}
If in (\ref{comp11}) we let $\Delta_n=w_{n+1}$, where $(w_n)$ is the recursion defined in (\ref{recursion}), then from (\ref{wn}), we obtain:
\begin{equation}\label{finality}
d(x_0,x_{n+1}) \leq \sum_{i=1}^{n+1} s^{\ceil*{\log_2(n+1)}} d(x_{i-1},x_i).
\end{equation}
From (\ref{finalite}) and (\ref{finality}), condition \ref{checkeq} holds.
\end{proof}
\noindent
It should be noted that polygon inequalities are used to prove that ``contractive" sequences are Cauchy sequences. In a b-metric space $(X,d,s)$, a sequence $\{x_n\}_{n \geq 0}$ is said to be contractive if $d(x_{n},x_{n+1}) \leq kd(x_{n-1},x_n)$ for all $n \in \mathbb{N}$, where $k \in [0,1)$.  Suzuki \cite{suzuki} combined inequalities of type (\ref{nameit}) to show that contractive sequences in b-metric spaces are Cauchy sequences. In the next section, we consider contractive sequences in the general context of an O-metric space.

\section{Contractions in O-metric spaces}


\begin{definition}\label{contraction}
\textup{
Let $(X,d_\om,a)$ be an $a$-upward O-metric space, with $\om$ nondecreasing in both variables, continuous at $(a,a)$ and $\om(a,a)=a$. 
 Let $\varphi:[0,\infty) \times [a,\infty) \to [a,\infty)$ be a function satisfying the following conditions:
\begin{itemize}
\item[$(\varphi_1)$] $\varphi(0,t)=\varphi(r,a)=a$ for all $t \geq a$ and $r \geq 0$;
\item[$(\varphi_2)$] $\varphi\big|_{(0,\infty) \times (a,\infty)}$ is increasing on both variables, and continuous in the second variable at $a$; 
\item[$(\varphi_3)$] $\forall r_1,r_2 \in [0,\infty)$ $\forall t \in [a,\infty)$,  $\varphi(r_1,\varphi(r_2,t)) =\varphi(r_1r_2,t)$.
\end{itemize}
 A map $T:X \to X$ is said to be $k$-$\varphi$ \emph{Lipschitz} on $X$, with $k \geq 0$, if
\begin{equation}\label{Lips}
d_\om(Tx,Ty) \leq \varphi(k,d_\om(x,y)) ~~~ \forall x,y \in X.
\end{equation}
\noindent
The $k$-$\varphi$ Lipschitz map $T:X \to X$ is said to be a \emph{contraction} if $k <1.$ 
\\
A sequence $\{x_n\}_{n \geq 0}$ of points in $X$ is said to be a $k$-$\varphi$ \emph{contractive} sequence if  for all $n \in \mathbb{N}$,
\begin{equation}\label{SeqCo}
d_\om(x_n,x_{n+1}) \leq \varphi(k,d_\om(x_{n-1},x_n)).
\end{equation}
}
\end{definition}
\noindent
The following are examples of mappings satisfying conditions $(\varphi_1)-(\varphi_3)$.
\begin{example}
When $a=0$, the following maps $\varphi:[0,\infty) \times [0,\infty) \to [0,\infty)$ satisfy conditions $(\varphi_1)-(\varphi_3)$:
\begin{enumerate} 
\item $\varphi(t,u)=tu$ for all $t,u \geq 0$. 
\item $\varphi(t,u)=(1+u)^t-1$ for all $t,u \geq 0$.
\item $\varphi(t,u)=\ln(1-t+te^u)$ for all $t,u \geq 0$. 
\end{enumerate}
\end{example}

\begin{example}
The map $\varphi:[0,\infty) \times [1,\infty) \to [1,\infty)$ defined by $\varphi(t,u)=u^t$ satisfy conditions $(\varphi_1)-(\varphi_3)$ when $a=1$.
\end{example}

\begin{example}\label{nowp}
Given an $a$-upward O-metric space $(X,d_\om,a)$, if $\lambda:[a,\infty) \to [0,\infty)$ is an increasing function satisfying $\lambda(a)=0$ and $\lambda(u ~\om ~v)=\lambda(u)+\lambda(v)$ (as in conditions $(E_1)$ and $(E_2)$ of Theorem \ref{metricequiv}), the map $\varphi:[0,\infty) \times [a,\infty) \to [a,\infty)$ defined by $\varphi(t,u)=\lambda^{-1}\left(t\lambda(u)\right)$ satisfies conditions $(\varphi_1)-(\varphi_3)$.
\end{example}
\noindent
We note the following about maps satisfying condition  (\ref{Lips}):
\begin{remark}\textup{
Let $(X,d_\om,a)$ be an $a$-upward O-metric space.
\begin{enumerate}
\item
The term \emph{contraction} is justified. Indeed, let $T$ be a contraction, with the inequality $d_\om(Tx,Ty) \leq \varphi(k,d_\om(x,y))$ for all $x,y \in X$ and some $k<1$; then  $T$ contracts the symmetric function $D_{r}$ defined by $D_{r}(x,y)=\varphi(r,d_\om(x,y))$ for some $r > 0$ and all $x,y \in X$. In fact, $D_{r}$ is an $a$-upward $\mathnormal{\mathbf{o}}$-metric if $\varphi(r,\om(t_1,t_2)) \leq \om(\varphi(r,t_1),\varphi(r,t_2))$ for all $t_1,t_2 \geq a$. 
\item As expected of \emph{Lipschitz} maps, a  $k$-$\varphi$ Lipschitz map $T:X \to X$ as defined in Definition \ref{contraction} is continuous.  Indeed, let $G$ be an open set in $X$ (for the topology $\mathcal{T}_{d_{\mathnormal{\mathbf{o}}}}$). To show that $T^{-1}(G)$ is an open set for the topology, we let $x \in T^{-1}(G)$. Since $Tx \in G$, then there is $r>0$ such that $B(Tx,r) \subset G$.  Choose $\delta>0$ such that $\varphi(k, a+\delta)-a=r$. For any $y \in B(x,\delta)$, since $d_{\mathnormal{\mathbf{o}}}(x,y)<a+\delta$, we have that $d_{\mathnormal{\mathbf{o}}}(Tx,Ty)-a \leq \varphi(k, d_{\mathnormal{\mathbf{o}}}(x,y))-a<\varphi(k,a+\delta)-a=r$. Thus $Ty \in B(Tx,r) \subset G$ and $y \in T^{-1}(G)$.
\item A $k$-$\varphi$ Lipschitz map $T:X \to X$ also preserves O-convergence (i.e., $T$ is sequentially continuous): if a sequence $\{x_n\}$ of points in $X$ is such that $x_n  \xrightarrow{\text{O}} x$, then, for each $n \in \mathbb{N}$, $a \leq d_\om(Tx_n,Tx) \leq \varphi(k,d_\om(x_n,x))$, hence, as $n \to \infty$, $Tx_n  \xrightarrow{\text{O}} Tx$.
\end{enumerate}
}
\end{remark}
\noindent
In order to determine values of $k$ for which a $k$-$\varphi$ contractive sequence is a Cauchy sequence, we introduce the set $C_\varphi$ as in the proposition below.

\begin{proposition}\label{contractionset}
\textup{
Let $\om$ be nondecreasing in both variables, continuous at $(a,a)$ and  such that $\om(a,a)=a$. For a function $\varphi:[0,\infty) \times [a,\infty) \to [a,\infty)$ satisfying $(\varphi_1) - (\varphi_3)$, define the set $C_\varphi$ by:
\begin{equation}\label{converted}
\left|
\begin{array}{lll}
C_\varphi=\left\{r \geq 0 ~ | ~ \forall \epsilon \geq a ~ ~ \lim_{n,i \to \infty}h_{n,i}(r,\epsilon) = a \right\}, \mbox{ where} \\
~~~~~~~~~~~~~ h_{n,i}(r,\epsilon)=h(\varphi(r^n,\epsilon),\ldots,\varphi(r^{n+i},\epsilon)) \mbox{ for some } h \in \Omega_i.    
\end{array}
\right.\footnote{ $\Omega_i$ is as defined in Lemma \ref{polygon}. One can also write $h_{n,i}(r,\epsilon)=\somj{0}{i} \varphi(r^{n+j},\epsilon)$.}
\end{equation}
}
Then  $C_\varphi$ is an interval such that $0 \in C_\varphi \subset [0,1)$.
\end{proposition}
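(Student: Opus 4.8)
The statement packages three assertions: $0 \in C_\varphi$, that $C_\varphi$ is an interval, and $C_\varphi \subseteq [0,1)$. The plan is to treat them in increasing order of difficulty, exploiting two monotonicity features throughout. First, since $\om$ is nondecreasing in both variables, every $h \in \Omega_i$ is nondecreasing in each of its $i+1$ arguments. Second, by $(\varphi_2)$ together with $(\varphi_1)$, the map $\varphi(\cdot,\epsilon)$ is nondecreasing on $[0,\infty)$ for each fixed $\epsilon \ge a$, and strictly increasing on $(0,\infty)$ when $\epsilon > a$. Writing $h_{n,i}(r,\epsilon)=\somj{0}{i}\varphi(r^{n+j},\epsilon)$, these combine to show that $h_{n,i}(r,\epsilon)$ is nondecreasing in $r$ for fixed $n,i,\epsilon$ and always satisfies $h_{n,i}(r,\epsilon)\ge a$.

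First I would dispose of $0 \in C_\varphi$. For $r=0$ and any $\epsilon \ge a$, condition $(\varphi_1)$ gives $\varphi(0^{\,n+j},\epsilon)=\varphi(0,\epsilon)=a$ for every $j\ge 0$ (as $n+j\ge 1$), so $h_{n,i}(0,\epsilon)=h(a,\ldots,a)$; since $\om(a,a)=a$, an induction on the binary-tree structure of $h$ gives $h(a,\ldots,a)=a$, whence $h_{n,i}(0,\epsilon)=a$ identically and the limit is $a$. Next I would show $C_\varphi$ is an interval by proving it downward closed in $[0,\infty)$: if $r_2\in C_\varphi$ and $0\le r_1<r_2$, then $r_1^{\,n+j}\le r_2^{\,n+j}$ for all $j$, so by monotonicity of $\varphi$ in its first argument and of each $h\in\Omega_i$ we get $a\le h_{n,i}(r_1,\epsilon)\le h_{n,i}(r_2,\epsilon)$ for every $\epsilon\ge a$. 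As the right-hand side tends to $a$, the squeeze principle forces $h_{n,i}(r_1,\epsilon)\to a$, so $r_1\in C_\varphi$. A downward-closed subset of $[0,\infty)$ containing $0$ is an interval with left endpoint $0$, which yields the interval claim once $C_\varphi\subseteq[0,1)$ is established.

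The crux is the inclusion $C_\varphi\subseteq[0,1)$, i.e.\ that no $r\ge 1$ belongs to $C_\varphi$. Fix $r\ge 1$ and choose any $\epsilon>a$; it suffices to exhibit a positive lower bound for $h_{n,i}(r,\epsilon)$ independent of $n$ and $i$. Set $\epsilon':=\varphi(1,\epsilon)$. By $(\varphi_2)$ the function $\varphi(1,\cdot)$ is strictly increasing on $(a,\infty)$ and continuous at $a$, and $\varphi(1,a)=a$ by $(\varphi_1)$, so $\epsilon'>a$. Since $r^{\,n+j}\ge 1$ and $\varphi(\cdot,\epsilon)$ is nondecreasing, every factor satisfies $\varphi(r^{\,n+j},\epsilon)\ge\varphi(1,\epsilon)=\epsilon'$. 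The decisive step is then a lower bound for $\om$-compositions: in an $a$-upward O-metric space the triangle $\om$-inequality with the middle point taken equal to an endpoint gives $\om(u,a)\ge u$ and $\om(a,u)\ge u$, so by monotonicity $\om(u,v)\ge\max\{u,v\}$ for $u,v\ge a$; feeding this into the binary-tree induction yields $h(s_0,\ldots,s_i)\ge\epsilon'$ whenever all $s_j\ge\epsilon'$. Hence $h_{n,i}(r,\epsilon)\ge\epsilon'>a$ for all $n,i$, the limit cannot be $a$, and $r\notin C_\varphi$.

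I expect this final lower bound to be the main obstacle, because the bare hypotheses on $\om$ (nondecreasing, continuous at $(a,a)$, $\om(a,a)=a$) are by themselves insufficient: an operation such as $\om(u,v)=\tfrac12\max\{u,v\}$ satisfies all three yet drives balanced compositions to $a$, which would wrongly place $r=1$ in $C_\varphi$. What rescues the statement is the \emph{expansiveness} $\om(u,v)\ge\max\{u,v\}$ inherited from the upward O-metric structure, and some care is needed to guarantee this expansiveness for all the intermediate $\om$-values produced inside $h$, not only for the outermost composition. Establishing (or invoking) this expansiveness cleanly, together with confirming $\epsilon'>a$ from the continuity clause of $(\varphi_2)$, are the two points on which the whole argument turns.
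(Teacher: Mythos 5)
Your treatment of $0\in C_\varphi$ and of the interval property coincides with the paper's own proof: $\varphi(0,\epsilon)=a$ by $(\varphi_1)$ and $h(a,\ldots,a)=a$ from $\om(a,a)=a$ give the first claim, and downward closedness follows from monotonicity of $\varphi$ in its first argument, monotonicity of each $h\in\Omega_i$ in all arguments, and the squeeze $a\le h_{n,i}(s,\epsilon)\le h_{n,i}(r,\epsilon)$. The tree inductions you spell out are left implicit in the paper, but the argument is the same.

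The divergence is in the inclusion $C_\varphi\subseteq[0,1)$, and there your analysis is sharper than the paper's. The paper handles only $r=1$ (excluding $r>1$ via downward closedness) and simply \emph{asserts} that $h_{n,i}(1,\epsilon)=h(\varphi(1,\epsilon),\ldots,\varphi(1,\epsilon))\not\to a$ because $\varphi(1,\epsilon)>a$; no justification is given, and none is possible from the stated hypotheses alone. Your counterexample is essentially right and pins down the defect: with $a=0$, $\om(u,v)=\frac12\max\{u,v\}$ (nondecreasing, continuous at $(0,0)$, $\om(0,0)=0$) and $\varphi(t,u)=tu$, the balanced AISO pattern $\{v_n\}$ of (\ref{recursion}) gives $h_{n,i}(1,\epsilon)=v_{i+1}(\epsilon,\ldots,\epsilon)=\epsilon/2^{\lfloor\log_2(i+1)\rfloor}\to 0$, so $1\in C_\varphi$ for that pattern and the proposition as literally stated fails. (One caution: the pattern matters here; for the pattern $\{w_n\}$ one gets $w_{2^l+1}(\epsilon,\ldots,\epsilon)=\epsilon/2$ for every $l$, because that tree always has a leaf at depth $1$, so ``balanced compositions'' must be instantiated by $v_n$, not $w_n$.) However, your repair is not licensed by the statement either: the proposition posits no O-metric space, so there is no upward O-metric structure from which to ``inherit'' $\om(u,a)\ge u$; and even inside an ambient upward space $(X,d_\om,a)$, putting $y=z$ in the triangle $\om$-inequality yields $\om(u,a)\ge u$ only for $u\in\im(d_\om)$, not for all $u\ge a$. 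What you have actually proved, completely and correctly, is the proposition under the additional hypothesis $\om(u,v)\ge\max\{u,v\}$ on $[a,\infty)^2$ (equivalently $\om(u,a)\ge u$ and $\om(a,u)\ge u$ together with monotonicity). Your example shows some such hypothesis cannot be dropped; it is satisfied by every $\om$ to which the paper later applies this proposition (e.g.\ $\om(u,v)=s(u+v)$ with $s\ge1$ in Lemma \ref{potentx}, and $\om$ satisfying $(E_1)$--$(E_2)$ in Lemma \ref{potentia1}); and under it your uniform bound $h_{n,i}(r,\epsilon)\ge\varphi(1,\epsilon)>a$ for all $r\ge1$ both validates the paper's one-line assertion and makes the downward-closedness detour unnecessary for the inclusion. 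In short: your first two parts are the paper's proof; your third part exposes, and modulo an added hypothesis repairs, a genuine gap in the paper's own argument.
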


\begin{proof}
If $r=0$, then $\varphi(r^n,\epsilon)=\ldots=\varphi(r^{n+i},\epsilon)=a$ for all $\epsilon>a$ and $n,i \in \mathbb{N}_0$, hence $h_{n,i}(r,\epsilon)=h(a,a,\ldots,a)=a$ for all $h \in \Omega_i$. Therefore $0 \in C_\varphi$.
\\
Suppose $r \in C_\varphi$ and $s \in [0,r]$. Since $\varphi$ is nondecreasing in the first variable, $\varphi(s^m,\epsilon) \leq \varphi(r^m,\epsilon)$ for $m \in \{n,n+1,\ldots,n+i\}$ with $n,i \in \mathbb{N}_0$. As $\om$ is nondecreasing in both variables, $h$ is nondecreasing in all variables hence $h_{n,i}(s,\epsilon) \leq h_{n,i}(r,\epsilon)$ for all $h \in \Omega_i$. Thus $s \in C_\varphi$ and $C_\varphi$ is an interval.
\\
Let $r=1$. For all $n,i \in \mathbb{N}_0$, $\epsilon>a$ and  for any $h \in \Omega_i$, $~h_{n,i}(1,\epsilon)=h(\varphi(1,\epsilon),\ldots,\varphi(1,\epsilon))$. $h_{n,i}(1,\epsilon) \not\to a$ for $\varphi(1,\epsilon)>a$ hence $1 \notin C_\varphi$.
\\
Therefore $C_\varphi$ is an interval, $0 \in C_\varphi \subset [0,1)$ and $\sup C_\varphi \leq 1$.
\end{proof}
\noindent
In the next lemmas, we find $C_\varphi$ for some maps $\varphi$ satisfying conditions $(\varphi_1)-(\varphi_3)$. 
\begin{lemma}\label{potentia1}
Suppose $\om$ is nondecreasing in both variables, continuous at $(a,a)$ with $\om(a,a)=a$, and satisfying conditions $(E_1)$ and $(E_2)$ of Theorem \ref{metricequiv} for a function $\lambda:[a,\infty) \to [0,\infty)$. If $\varphi:[0,\infty) \times [a,\infty) \to [a,\infty)$ is defined by $\varphi(t,u)=\lambda^{-1}\left(t\lambda(u)\right)$ $\forall t \geq 0 ~ \forall u \geq a$, then $C_\varphi=[0,1)$ \end{lemma}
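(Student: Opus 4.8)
The plan is to prove the two inclusions $[0,1)\subseteq C_\varphi$ and $C_\varphi\subseteq[0,1)$. The second is already supplied by Proposition \ref{contractionset} (which gives $0\in C_\varphi\subseteq[0,1)$ together with the fact that $C_\varphi$ is an interval), so all the work lies in showing that every $r\in[0,1)$ belongs to $C_\varphi$, i.e. that $\lim_{n,i\to\infty}h_{n,i}(r,\epsilon)=a$ for each $\epsilon\geq a$.

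First I would exploit that conditions $(E_1)$ and $(E_2)$ make $\om$ associative, so every $h\in\Omega_i$ evaluates a given tuple to the same number; thus $h_{n,i}(r,\epsilon)$ is well defined and, by equation (\ref{bonsoir}),
\begin{equation*}
h_{n,i}(r,\epsilon)=\somj{0}{i}\varphi(r^{n+j},\epsilon)=\lambda^{-1}\left(\sum_{j=0}^{i}\lambda\big(\varphi(r^{n+j},\epsilon)\big)\right).
\end{equation*}
Since $\varphi(t,u)=\lambda^{-1}(t\lambda(u))$ yields $\lambda(\varphi(r^{n+j},\epsilon))=r^{n+j}\lambda(\epsilon)$, the inner sum is a geometric sum, and for $r\in[0,1)$,
\begin{equation*}
h_{n,i}(r,\epsilon)=\lambda^{-1}\left(\lambda(\epsilon)\,r^{n}\,\frac{1-r^{i+1}}{1-r}\right).
\end{equation*}
Using $0\leq\frac{1-r^{i+1}}{1-r}\leq\frac{1}{1-r}$, the argument of $\lambda^{-1}$ is bounded above by $\frac{\lambda(\epsilon)}{1-r}\,r^{n}$, which tends to $0$ as $n\to\infty$ uniformly in $i$; hence the argument tends to $0$ no matter how $n,i\to\infty$.

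The last, and main, step is to push this limit through $\lambda^{-1}$, that is, to verify $\lim_{s\to0^+}\lambda^{-1}(s)=a$. This is the only delicate point, because continuity of $\lambda$ is not assumed (only that $\lambda$ is increasing with $\lambda(a)=0$). I would resolve it using the bijectivity from Theorem \ref{metricequiv}: $\lambda$ is a strictly increasing bijection of $[a,\infty)$ onto $\im(\lambda)$, so $\lambda^{-1}$ is increasing and bounded below by $a=\lambda^{-1}(0)$, whence the right limit $L:=\lim_{s\to0^+}\lambda^{-1}(s)=\inf_{s>0}\lambda^{-1}(s)$ exists with $L\geq a$. If $L>a$, pick $v$ with $a<v<L$; then $\lambda(v)>\lambda(a)=0$ while $\lambda^{-1}(\lambda(v))=v<L$, contradicting $\lambda^{-1}(s)\geq L$ for all $s>0$. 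Therefore $L=a$.

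Combining these steps, $h_{n,i}(r,\epsilon)\to\lambda^{-1}(0)=a$ for every $\epsilon\geq a$ and every $r\in[0,1)$, so $[0,1)\subseteq C_\varphi$; with the reverse inclusion from Proposition \ref{contractionset}, this gives $C_\varphi=[0,1)$. The only nonroutine ingredient is the one-sided continuity $\lim_{s\to0^+}\lambda^{-1}(s)=a$, everything else being the associativity rewriting and a geometric-sum estimate.
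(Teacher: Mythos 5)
Your proof is correct and follows essentially the same route as the paper's: associativity from $(E_1)$--$(E_2)$ together with (\ref{bonsoir}) reduces $h_{n,i}(r,\epsilon)$ to $\lambda^{-1}$ of a geometric sum $\frac{1-r^{i+1}}{1-r}r^n\lambda(\epsilon)$, which tends to $0$ precisely when $r<1$. The only point where you go beyond the paper is the justification of $\lim_{s \to 0^+}\lambda^{-1}(s)=a$ --- the paper passes from $\lambda(h_{n,i}(r,\epsilon)) \to 0$ to $h_{n,i}(r,\epsilon) \to a$ without comment --- and your monotonicity/infimum argument correctly fills that small gap (needed because $\lambda$ is only assumed increasing, not continuous), while your use of Proposition \ref{contractionset} for the inclusion $C_\varphi \subset [0,1)$ matches what the paper's ``if and only if'' computation achieves directly.
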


\begin{proof}
It is easy to check that $\varphi$ so defined satisfies conditions $(\varphi_1)-(\varphi_3)$.  
Also, $\om$ is associative. Thus, if $r \geq 0$, $\epsilon>a$, $i \in \mathbb{N}_0$ and $h \in \Omega_i$, then by (\ref{bonsoir}), 
\begin{equation*}
h(t_0,t_1,\cdots,t_n)=\somj{0}{n} t_j=\lambda^{-1} \left(\displaystyle \sum_{j=0}^n \lambda(t_j) \right).
\end{equation*}
for $t_1,t_2,\ldots,t_{i+1} \geq a$. Therefore,
%
%
%
\begin{equation*}
\begin{array}{lcl}
\lambda(h_{n,i}(r,\epsilon))  &=& \lambda\left(h(\varphi(r^n,\epsilon),\varphi(r^{n+1},\epsilon),\ldots,\varphi(r^{n+i},\epsilon))\right)
\\
&=&\displaystyle \sum_{j=0}^{i} \lambda\left(\varphi(r^{n+j},\epsilon)  \right)
\\
&=&\displaystyle \sum_{j=0}^{i} r^{n+j}\lambda(\epsilon) 
\\
&=&\dfrac{1-r^{i+1}}{1-r}r^n\lambda(\epsilon)  \to 0 \mbox{ as $n,i \to \infty$ if and only if } r<1.
\end{array}
\end{equation*}
Thus $C_\varphi=[0,1)$ and $\sup C_\varphi=1$.
\end{proof}

\begin{lemma}\label{potentx}
If $\om$ is defined by $\om(u,v)=s(u+v)$ for all $u,v \geq 0$, where $s$ is a constant greater than or equal to $1$,
then $C_\varphi=[0,1)$ for $\varphi:[0,\infty) \times [0,\infty) \to [0,\infty)$ defined by $\varphi(t,u)=tu$ for all $t,u \geq 0$. 
\end{lemma}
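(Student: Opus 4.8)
The plan is to combine the upper bound $C_\varphi \subseteq [0,1)$ already furnished by Proposition \ref{contractionset} with the reverse inclusion $[0,1) \subseteq C_\varphi$; since $0 \in C_\varphi$ is known, it suffices to show that every $r \in (0,1)$ lies in $C_\varphi$. Here $a=0$ and $\varphi(t,u)=tu$, so $\varphi(r^{n+j},\epsilon)=r^{n+j}\epsilon$, and the whole problem reduces to understanding the functions in $\Omega_i$ for the operation $\om(u,v)=s(u+v)$. The structural remark on which everything rests is that, exactly as in the b-metric computations of the previous section, every $h\in\Omega_i$ is a weighted sum $h(t_0,\ldots,t_i)=\sum_{j=0}^i s^{d_j}t_j$, where $d_j\ge 0$ is the depth of the $j$-th leaf of the binary tree (parenthesization) defining $h$, i.e. the number of $\om$-operations separating $t_j$ from the root. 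Consequently
\begin{equation*}
h_{n,i}(r,\epsilon)=\sum_{j=0}^i s^{d_j}r^{n+j}\epsilon=\epsilon\, r^n\sum_{j=0}^i s^{d_j}r^j,
\end{equation*}
and the task is to control $\sum_{j=0}^i s^{d_j}r^j$ as $i\to\infty$ for a well-chosen family of trees.

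First I would record why the obvious patterns fail, as this pinpoints the obstacle. The LIFO and FIFO patterns $z_n,u_n$ of (\ref{recursion}) give coefficients $s^{j+1}$, so by (\ref{zn}) and (\ref{Godshelp}) one obtains convergence only when $sr<1$, i.e. merely $C_\varphi\supseteq[0,1/s)$, which is too small once $s>1$. The balanced pattern $w_n$ is worse, not better: by the exact expression (\ref{ghen}) (or the bound (\ref{wn})) roughly the first half of the leaves carry the coefficient $s^{\lceil\log_2(i+1)\rceil}$, so $\sum_j s^{d_j}r^j\sim s^{\log_2(i+1)}/(1-r)$ grows without bound in $i$. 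The difficulty is therefore that a coefficient depending on the total leaf count $i$ is fatal, because it multiplies the slowly-decaying early terms $r^j$ with $j$ small.

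The main step is to choose, for each $i$, a parenthesization $h\in\Omega_i$ in which the depth $d_j$ of leaf $j$ depends only on the position $j$ and grows merely logarithmically, $d_j=O(\log(j+1))$, uniformly in $i$. A concrete such tree is the right comb of consecutive balanced blocks $B_0,B_1,B_2,\ldots$ of sizes $2^0,2^1,2^2,\ldots$ (with the final block truncated): combining the blocks by $\om(T_{B_0},\om(T_{B_1},\om(T_{B_2},\cdots)))$ and balancing within each block places leaf $j\in B_k$ at depth at most $(k+1)+k\le 2\log_2(j+1)+1$. Hence $s^{d_j}\le s\,(j+1)^{2\log_2 s}$ is bounded by a fixed polynomial in $j$, and since $r<1$ the geometric decay dominates it:
\begin{equation*}
\sum_{j=0}^{i}s^{d_j}r^j\le s\sum_{j=0}^{\infty}(j+1)^{2\log_2 s}\,r^j=:M_r<\infty,\quad\text{independently of }i.
\end{equation*}
This is where the argument really bites: a position-dependent logarithmic depth makes each coefficient polynomial in $j$, and a polynomial is swamped by $r^j$ summably and uniformly in $i$, whereas the uniformly balanced or comb trees cannot achieve this.

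With this choice $h_{n,i}(r,\epsilon)\le \epsilon M_r\, r^n$, whose right-hand side tends to $0$ as $n\to\infty$ uniformly in $i$; thus $\lim_{n,i\to\infty}h_{n,i}(r,\epsilon)=0=a$ for every $\epsilon\ge 0$, giving $r\in C_\varphi$. Here I use that the definition of $C_\varphi$ lets us pick $h\in\Omega_i$ separately for each $i$; moreover, when $s=1$ the operation is associative and the conclusion also follows at once from Lemma \ref{potentia1}. Together with Proposition \ref{contractionset} this yields $C_\varphi=[0,1)$. I expect the only genuine obstacle to be the construction in the third paragraph, namely realizing that one must spread the depths according to leaf \emph{position} rather than using a uniformly balanced or a plain comb tree; once the right trees are in hand the estimates are routine.
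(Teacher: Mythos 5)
Your proof is correct, and it takes a genuinely different route to the key construction, even though the guiding idea (hybrid trees mixing the balanced pattern $w$ with the comb pattern $z$ of (\ref{recursion})) is the same as the paper's. The paper first fixes $l=l(r)$ with $sr^{2^l}<1$, uses the balanced tree $w_{i+1}$ alone when $i+1\le 2^l$, and otherwise chains blocks of \emph{constant} size $2^l$ by the comb $z_{\mu+1}$ as in (\ref{justb}); its closing estimate is a comparison with the geometric series $\sum_j\bigl(sr^{2^l}\bigr)^j$, so the choice of parenthesization depends on $r$. You instead let the block sizes grow geometrically ($1,2,4,\ldots$), which makes the depth of leaf $j$ of order $\log_2(j+1)$ independently of both $i$ and $r$: the coefficient $s^{d_j}$ is then polynomial in $j$ and summable against $r^j$ for every $r\in(0,1)$ at once, so a single universal family of trees certifies $[0,1)\subseteq C_\varphi$. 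Your structural observation that every $h\in\Omega_i$ equals $\sum_{j}s^{d_j}t_j$ with $d_j$ the leaf depth is also a clean abstraction the paper never states explicitly (it only computes the specific patterns $z$, $u$, $w$), and it makes transparent both why the pure LIFO/FIFO and purely balanced patterns fail and exactly what property a successful tree must have. The trade-off: the paper's argument reuses the ready-made formulas (\ref{zn}) and (\ref{wn}), so its estimates are shorter, while yours needs the (easy) depth bound for the block-comb trees but buys uniformity in $r$ and the sharper implicit statement that the weights can be taken polynomial in position.
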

\begin{proof}
The function $\om$ so defined is nondecreasing in both variables, continuous at $(0,0)$ and such that $\om(0,0)=0$. Furthermore, the map
$\varphi$ defined by $\varphi(t,u)=tu$ for all $t,u \geq 0$ satisfies conditions $(\varphi_1) - (\varphi_3)$ for $a=0$. Let $r \in (0,1)$, $\epsilon>0$, $n, i \in \mathbb{N}_0$. Let $l \in \mathbb{N}$ be such that $sr^{2^l} <1$.
\\
If $i +1 \leq 2^l$ then taking $h=w_{i+1} \in \Omega_i$, where $\{w_n\}$ is defined as in (\ref{recursion}), we have from (\ref{wn}) that:
\begin{equation}\label{mediate}
\begin{array}{lcl}
h_{n,i}(r,\epsilon) 
&=& w_{i+1}(r^n\epsilon,\ldots,r^{n+i}\epsilon)
\\
&\leq& s^{\ceil*{\log_2(i+1)}} \displaystyle  \sum_{j=0}^i  r^{n+j} \epsilon 
\\
&\leq& \displaystyle s^l r^n \sum_{j=0}^\infty r^j \epsilon =s^l r^n\dfrac{\epsilon}{1-r} \to 0 \mbox{ as } n,i \to \infty.
\end{array}
\end{equation}
If $2^l <i+1$, then putting $\mu=\floor*{\dfrac{i+1}{2^l}}$, we take $h \in \Omega_{i}$ defined for all $t_1,\ldots,t_{i+1} \geq 0$ by
\begin{equation}\label{justb}
h(t_1,t_2,\ldots,t_{i+1})=z_{\mu+1}\left(w_{2^l}(T_1),w_{2^l}(T_2),\ldots,w_{2^l}(T_\mu),w_{i+1-\mu 2^l}(t_{\mu 2^l+1},\ldots,t_{i+1})\right)
\end{equation}
where $T_j=(t_{(j-1)2^l+1},\ldots,t_{j2^l})$ for $1 \leq j \leq \mu$ and $z_{\mu+1} \in \Omega_\mu$ as defined in (\ref{recursion}). If we write
$R_j=(r^{n+(j-1)2^l}\epsilon,\ldots,r^{n+j2^l-1}\epsilon)$, then
$$
h_{n,i}(r,\epsilon)=z_{\mu+1}\left(w_{2^l}(R_1),w_{2^l}(R_2),\ldots,w_{2^l}(R_\mu),w_{i+1-\mu 2^l}(r^{n+\mu 2^l}\epsilon,\ldots,r^{n+i}\epsilon)\right).$$
From (\ref{mediate}), $w_{2^l}(R_j) \leq \dfrac{s^l r^{n+(j-1)2^l}\epsilon}{1-r}$ for all $j$. Since $\om$ is nondecreasing in both variables, $z_{\mu+1}$ is nondecreasing in all variables hence from (\ref{mediate}) and (\ref{zn}),
\begin{equation*}
\begin{array}{lcl}
h_{n,i}(r,\epsilon) & \leq & z_{\mu+1}\left(\dfrac{s^l r^{n}\epsilon}{1-r},\dfrac{s^l r^{n+2^l}\epsilon}{1-r},\ldots,\dfrac{s^l r^{n+(\mu-1)2^l}\epsilon}{1-r},w_{i+1-\mu 2^l}(r^{n+\mu 2^l}\epsilon,\ldots,r^{n+i}\epsilon)\right)
\\
& \leq &  z_{\mu+1}\left(\dfrac{s^l r^{n}\epsilon}{1-r},\dfrac{s^l r^{n+2^l}\epsilon}{1-r},\ldots,\dfrac{s^l r^{n+(\mu-1)2^l}\epsilon}{1-r},\dfrac{s^l r^{n+\mu 2^l}\epsilon}{1-r} \right)
\\
&=& \displaystyle\sum_{j=1}^{\mu} \left[\dfrac{s^{j+l} r^{n+(j-1)2^l} \epsilon}{1-r} \right] + \dfrac{s^{\mu+l} r^{n+\mu 2^l}\epsilon}{1-r}
\\
&\leq& \dfrac{r^n  s^l\epsilon}{1-r} \displaystyle \sum_{j=1}^{\mu+1} s^jr^{(j-1)2^l} \leq \dfrac{r^n s^l \epsilon}{1-r} \displaystyle \sum_{j=1}^{\mu+1} \left(sr^{2^l}\right)^j \to 0 \mbox{ as } n \to \infty.
\end{array}
\end{equation*}
Thus $r \in C_\varphi$ and so $C_\varphi=[0,1)$.
\end{proof}
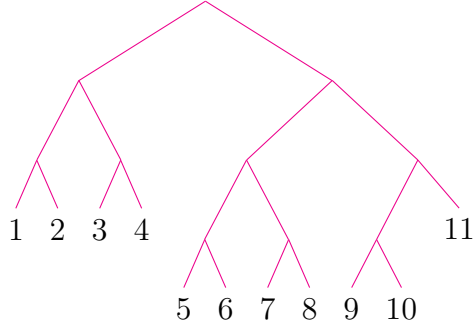
\begin{figure}[h]
\centering
\begin{tikzpicture}
{\color{magenta}
\Tree [                 [ [ {\color{black} 1} {\color{black} 2} ] [ {\color{black} 3} {\color{black} 4} ] ]                     [ [ [ {\color{black} 5} {\color{black} 6} ] [ {\color{black} 7} {\color{black} 8} ] ] [  [ {\color{black} 9} {\color{black} 10} ] {\color{black} 11} ] ]  ] }
\end{tikzpicture}
\caption{Binary tree for $h$ defined in (\ref{justb}) for $i=10$, $l=2$, $\mu=2$.}
\label
{twof}
\end{figure}

\section{Fixed point theorems in O-metric spaces}

In this section, we prove some fixed point theorems in the setting of upward O-metric spaces satisfying conditions $(U_1)$ and $(U_2)$, beginning with the contraction principle. The proof rely on the polygon inequalities and the use of contractive sequences. 

\subsection{The Banach contraction principle}

\begin{theorem}{(Banach Contraction Principle)}\label{most}
Let $(X,d_\om,a)$ be a complete $a$-upward O-metric space, with 
$\om$ nondecreasing in both variables, continuous at $(a,a)$, and $\om(a,a)=a$. Let $T:X \to X$ be a $k$-$\varphi$ contraction, where $\varphi:[0,\infty) \times [a,\infty) \to [a,\infty)$ satisfies conditions $(\varphi_1) - (\varphi_3)$, and $k<\kappa$, with $\kappa:=\sup C_\varphi$. 
Then $T$ has a unique fixed point. 
\end{theorem}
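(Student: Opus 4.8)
The plan is to run the Picard iteration from an arbitrary point and show it is Cauchy, using the polygon $\om$-inequalities that the set $C_\varphi$ was built to control. Fix $x_0 \in X$ and put $x_{n+1} = Tx_n$. Since $T$ is a $k$-$\varphi$ contraction, $d_\om(x_n,x_{n+1}) = d_\om(Tx_{n-1},Tx_n) \le \varphi(k,d_\om(x_{n-1},x_n))$, so $\{x_n\}$ is $k$-$\varphi$ contractive. First I would iterate this inequality: using that $\varphi$ is nondecreasing in the second variable $(\varphi_2)$ together with the composition law $(\varphi_3)$, namely $\varphi(k,\varphi(k^n,t))=\varphi(k^{n+1},t)$, an easy induction yields $d_\om(x_n,x_{n+1}) \le \varphi(k^n,d_\om(x_0,x_1))$ for all $n$. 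Since $k<\kappa=\sup C_\varphi$ and, by Proposition \ref{contractionset}, $C_\varphi$ is an interval containing $0$ (and downward closed by its proof), we have $k \in C_\varphi$.

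Next I would bound $d_\om(x_n,x_m)$ for $m>n$. Setting $i=m-n-1$ and applying a polygon $\om$-inequality (Lemma \ref{polygon}) to the $i+2$ points $x_n,x_{n+1},\ldots,x_m$, we get $d_\om(x_n,x_m) \le h\big(d_\om(x_n,x_{n+1}),\ldots,d_\om(x_{m-1},x_m)\big)$ for every $h \in \Omega_i$. As $\om$ is nondecreasing in both variables, each $h$ is nondecreasing in all arguments, so I may replace each $d_\om(x_{n+j},x_{n+j+1})$ by its bound $\varphi(k^{n+j},d_\om(x_0,x_1))$; choosing $h$ to be the pattern witnessing $k \in C_\varphi$, the right-hand side is exactly $h_{n,i}(k,d_\om(x_0,x_1))$, which tends to $a$. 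Hence $\lim_{n,m\to\infty}d_\om(x_n,x_m)=a$ and $\{x_n\}$ is Cauchy. I expect this to be the main obstacle: one must reconcile the joint limit $\lim_{n,i\to\infty}$ in the definition of $C_\varphi$ with the Cauchy condition, i.e. guarantee the bound decays to $a$ as $n\to\infty$ \emph{uniformly in} $i=m-n-1$ (in particular when $m$ is close to $n$). This is precisely where the nondecreasing and inflationary behaviour of $\om$ and the freedom to select the pattern $h$ are exploited to produce a bound depending only on $n$, exactly as in Lemmas \ref{potentia1} and \ref{potentx}.

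By O-completeness, $\{x_n\}$ O-converges to some $x^* \in X$, and I would then verify $x^*$ is fixed. By the triangle $\om$-inequality, $a \le d_\om(x^*,Tx^*) \le \om\big(d_\om(x^*,x_{n+1}),d_\om(x_{n+1},Tx^*)\big)$. Here $d_\om(x^*,x_{n+1}) \to a$ by O-convergence, while $d_\om(x_{n+1},Tx^*)=d_\om(Tx_n,Tx^*) \le \varphi(k,d_\om(x_n,x^*)) \to \varphi(k,a)=a$, using $(\varphi_1)$ and the continuity of $\varphi$ in the second variable at $a$ from $(\varphi_2)$. Since $\om$ is continuous at $(a,a)$ with $\om(a,a)=a$, letting $n\to\infty$ forces $d_\om(x^*,Tx^*)=a$, hence $Tx^*=x^*$. (Equivalently, $T$ is sequentially continuous by the remark following Definition \ref{contraction}, so $x_{n+1}=Tx_n \to Tx^*$; uniqueness of O-limits under $(U_1)$–$(U_2)$ then gives the same conclusion.)

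Finally, for uniqueness suppose $Tx^*=x^*$ and $Ty^*=y^*$. Iterating the Lipschitz inequality with $(\varphi_3)$ as before gives $d_\om(x^*,y^*)=d_\om(T^nx^*,T^ny^*) \le \varphi(k^n,d_\om(x^*,y^*))$ for every $n$. It remains to note $\varphi(k^n,d_\om(x^*,y^*)) \to a$; this is the $i=0$ content of $k \in C_\varphi$, since $h_{n,i}(k,\epsilon)$ dominates its first leaf $\varphi(k^n,\epsilon)$ (because $\om(u,v)\ge u$ for $u,v\ge a$ in an upward O-metric), so the convergence $h_{n,i}(k,\epsilon)\to a$ forces $\varphi(k^n,\epsilon)\to a$. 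Therefore $d_\om(x^*,y^*) \le a$, whence $d_\om(x^*,y^*)=a$ and $x^*=y^*$, so $T$ has a unique fixed point.
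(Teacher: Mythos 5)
Your skeleton for existence matches the paper's proof almost step for step: Picard iteration, the $(\varphi_3)$-iteration giving $d_\om(x_n,x_{n+1})\le\varphi(k^n,d_\om(x_0,x_1))$, the polygon $\om$-inequalities of Lemma \ref{polygon} together with monotonicity of $\om$ to get $d_\om(x_n,x_m)\le h_{n,i}(k,d_\om(x_0,x_1))$, Cauchyness from $k\in C_\varphi$, completeness, and then the fixed-point identity. (For that last step you use the triangle $\om$-inequality and continuity of $\om$ at $(a,a)$, where the paper invokes uniqueness of O-limits, which is a standing assumption of that section; both routes are legitimate, and yours is the one the paper itself uses in Theorem \ref{escape}(ii).)

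The genuine gap is the inequality $\om(u,v)\ge u$ for all $u,v\ge a$, which you invoke twice: once as the ``inflationary behaviour'' that is supposed to settle the small-$i$ (bounded gap $m-n$) issue in the Cauchy step, and once, crucially, in the uniqueness argument to claim that $h_{n,i}(k,\epsilon)$ dominates its first leaf so that $h_{n,i}(k,\epsilon)\to a$ forces $\varphi(k^n,\epsilon)\to a$. This inequality is not among the hypotheses (only ``nondecreasing, continuous at $(a,a)$, $\om(a,a)=a$'') and does not follow from the definition of an upward O-metric: the axioms constrain $\om$ only on triples of actual distances, and what the triangle $\om$-inequality gives (taking $y=z$) is $u\le\om(u,a)\le\om(u,v)$ only for $u\in\im(d_\om)$, whereas the leaves $\varphi(k^{n+j},\epsilon)$ of $h_{n,i}$ and the values of its subtrees need not be distances. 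Nor can you repair this by replacing $\om$ with $\max\{\om(u,v),u,v\}$: enlarging $\om$ enlarges every $h_{n,i}$ and hence shrinks $C_\varphi$, while the hypothesis $k<\sup C_\varphi$ is tied to the given $\om$. The paper's uniqueness argument avoids all of this in one step: if $x_1^*\ne x_2^*$ were fixed points, then $d_\om(x_1^*,x_2^*)\le\varphi(k,d_\om(x_1^*,x_2^*))<\varphi(1,d_\om(x_1^*,x_2^*))=d_\om(x_1^*,x_2^*)$, a contradiction; here the strict inequality is $(\varphi_2)$, and $\varphi(1,t)=t$ on $(a,\infty)$ follows from $(\varphi_3)$ (which makes $\varphi(1,\cdot)$ idempotent) combined with strict monotonicity in the second variable. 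You should substitute this one-step argument for your limiting one. As for the Cauchy step, the paper simply reads $k<\kappa$ as giving $d_\om(x_n,x_m)\to a$ for all pairs $m>n\to\infty$; your instinct that bounded gaps need separate attention is sound, but as written your resolution rests on the same unsupported inequality rather than on anything extracted from (\ref{converted}) or from Lemmas \ref{potentia1} and \ref{potentx}.
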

\begin{proof} Let $x_0 \in X$ and $\{x_n\}_{n \in \mathbb{N}}$ be a sequence\footnote{As seen in (\ref{cry}), the sequence $\{x_n\}$ of iterates of $T$ is a contractive sequence.} such that $x_{n+1}=Tx_n$ for $n \geq 0$. For
 $n \geq 0$,
\begin{equation}\label{cry}
\begin{array}{lcl}
d_\om(x_n,x_{n+1}) = d_\om(Tx_{n-1},Tx_n) & \leq & \varphi(k,d_\om(x_{n-1},x_n))
\\
&\leq& \varphi(k, \varphi(k,d_\om(x_{n-2},x_{n-1})))
\\
&=& \varphi(k^2,d_\om(x_{n-2},x_{n-1}))
\\
&\vdots&
\\
&\leq& \varphi(k^n,d_\om(x_0,x_1)). 
\end{array}
\end{equation}
\noindent
From Lemma \ref{polygon}, $d_\om(x_n,x_{n+i}) \leq h(d(x_n,x_{n+1}),d(x_{n+1},x_{n+2}),\ldots,d(x_{n+i-1},x_{n+i}))$ for all $h \in \Omega_{i-1}$, $n, i \in \mathbb{N}$. Since $h$ is nondecreasing in all its variables, and for all $n \in \mathbb{N}$, $d_\om(x_n,x_{n+1}) \leq \varphi(k^n,d_\om(x_0,x_1))$, we have: $$d_\om(x_n,x_{n+i}) \leq  h_{n,i-1}(k,d_\om(x_0,x_1)) ~~\forall n, i \in \mathbb{N}.$$
If $d_\om(x_0,x_1)=a$, then $x_0=x_1=Tx_0$ and $x_0$ is a fixed point of $T$.\\
Suppose that $d_\om(x_0,x_1)>a$. Since $k<\kappa$, $d_\om(x_n,x_{m}) \to a$  as $n,m \to \infty$, hence $\{x_n\}$ is a Cauchy sequence\footnote{ $C_{\varphi}$ can thus be considered as the \emph{interval of Cauchyness of contractive sequences}: a $k$-$\varphi$ contractive sequence is a Cauchy sequence if $k \in [0, \sup C_\varphi)$.}, and thus converges to some point $x^* \in X$.\\
For all $n \in \mathbb{N}$, $d_\om(x_{n+1},Tx^*) =d_\om(Tx_n,Tx^*) \leq \varphi(k,d_\om(x_n,x^*)) \to \varphi(k,a)=a$. Hence $x_{n+1} \xrightarrow{\text{O}} Tx^*$ and $Tx^*=x^*$ since the O-limit is unique.\\
Suppose 
that  $x_1^*$ and $x_2^*$ are two fixed points of $T$ such that $x_1^*\neq x_2^*$.  Then
$d_\om(x_1^*,x_2^*) =d_\om(Tx_1^*,Tx_2^*) \leq \varphi(k,d_\om(x_1^*,x_2^*)) < \varphi(1,d_\om(x_1^*,x_2^*))= d_\om(x_1^*,x_2^*)$, a contradiction. Hence, the fixed point of $T$ is unique.
\end{proof}
\begin{corollary}\label{generalE}
Let $(X,d_\om,a)$ be an $a$-upward O-metric space such that there is a function $\lambda:[a,\infty) \to [0,\infty)$ satisfying conditions $(E_1)$ and $(E_2)$. Any map $T:X \to X$ such that for some $k \in (0,1)$ and for all $x,y \in X$
\begin{equation*}
d_\om(Tx,Ty) \leq \lambda^{-1}\left(k \lambda \left(d_\om(x,y)\right)\right),
\end{equation*}
has a unique fixed point in $X$.
\end{corollary}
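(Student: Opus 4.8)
The plan is to read the corollary as the specialization of the Banach Contraction Principle (Theorem \ref{most}) to the particular Lipschitz gauge $\varphi(t,u)=\lambda^{-1}(t\lambda(u))$, so that essentially all the analytic work has already been carried out in Example \ref{nowp} and Lemma \ref{potentia1}. First I would set $\varphi(t,u):=\lambda^{-1}(t\lambda(u))$ for $t\geq 0$ and $u\geq a$, and observe that the hypothesis $d_\om(Tx,Ty)\leq \lambda^{-1}(k\lambda(d_\om(x,y)))$ is literally the statement $d_\om(Tx,Ty)\leq \varphi(k,d_\om(x,y))$, i.e.\ that $T$ is a $k$-$\varphi$ Lipschitz map in the sense of Definition \ref{contraction}. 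By Example \ref{nowp}, this $\varphi$ satisfies $(\varphi_1)$--$(\varphi_3)$; by Lemma \ref{potentia1}, $C_\varphi=[0,1)$, so $\kappa:=\sup C_\varphi=1$. Since $k\in(0,1)$ we have $k<\kappa$, and $T$ is precisely a $k$-$\varphi$ contraction of the type handled by Theorem \ref{most}.

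Next I would verify the structural requirements that Theorem \ref{most} places on $\om$ --- nondecreasing in both variables, $\om(a,a)=a$, and continuity at $(a,a)$ --- directly from $(E_1)$ and $(E_2)$. Condition $(E_2)$ gives $\om(u,v)=\lambda^{-1}(\lambda(u)+\lambda(v))$. Because $\varphi$ must be well defined and valued in $[a,\infty)$ for every $t\geq 0$, fixing any $u_0>a$ with $\lambda(u_0)>0$ shows that the set $\{t\lambda(u_0):t\geq 0\}=[0,\infty)$ lies in the image of $\lambda$; hence $\lambda$ is an increasing bijection $[a,\infty)\to[0,\infty)$ and therefore a homeomorphism with continuous inverse. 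Monotonicity of $\lambda$ and $\lambda^{-1}$ then yields that $\om$ is nondecreasing in each variable, that $\om(a,a)=\lambda^{-1}(\lambda(a)+\lambda(a))=\lambda^{-1}(0)=a$, and that $\om$ is continuous everywhere, in particular at $(a,a)$. With completeness inherited, every hypothesis of Theorem \ref{most} is in place, and it delivers the unique fixed point.

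The main obstacle is exactly the verification in the previous paragraph: the corollary lists only $(E_1)$ and $(E_2)$, whereas Theorem \ref{most} and Lemma \ref{potentia1} also presuppose that $\om$ is continuous at $(a,a)$ and nondecreasing. The crux is thus to argue that these regularity properties are not extra assumptions but consequences of $(E_1)$--$(E_2)$, via the homeomorphism property of $\lambda$, and to confirm that completeness of $(X,d_\om,a)$ is available (this should be read as part of the standing hypotheses, matching Theorem \ref{most}).

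An alternative, self-contained route avoids Theorem \ref{most} altogether and sidesteps the $\om$-regularity bookkeeping. By Theorem \ref{metricequiv} the map $D:=\lambda\circ d_\om$ is a metric inducing the same topology as $d_\om$, and applying the increasing function $\lambda$ to the contraction hypothesis turns it into $D(Tx,Ty)=\lambda(d_\om(Tx,Ty))\leq k\,\lambda(d_\om(x,y))=k\,D(x,y)$, a classical Banach contraction with ratio $k<1$ on $(X,D)$. Invoking the ordinary Banach fixed point theorem on the complete metric space $(X,D)$ then produces the unique fixed point, and transporting back through the topological equivalence of Theorem \ref{metricequiv} shows it is the desired fixed point in $(X,d_\om,a)$.
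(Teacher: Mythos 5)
Your main argument is exactly the paper's proof: set $\varphi(t,u)=\lambda^{-1}(t\lambda(u))$, use Lemma \ref{potentia1} to conclude $\kappa=\sup C_\varphi=1$ so that $k<\kappa$, and invoke Theorem \ref{most}; your additional checks --- that $(E_1)$--$(E_2)$ force $\om(u,v)=\lambda^{-1}(\lambda(u)+\lambda(v))$ to be nondecreasing and continuous at $(a,a)$ with $\om(a,a)=a$, and that completeness must be read as a standing hypothesis --- only make explicit what the paper's one-line proof leaves implicit. Your alternative route, pushing the problem through Theorem \ref{metricequiv} to the ordinary metric $\lambda\circ d_\om$ and quoting the classical Banach fixed point theorem, is a genuinely different and more elementary argument than the one the paper gives, though it bypasses the $C_\varphi$ machinery the paper is showcasing.
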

\begin{proof}
Under the conditions of the corollary, $\kappa=\sup C_\varphi=1$ for $\varphi(r,u)=\lambda^{-1}(r \lambda(u))$ as seen in Proposition \ref{potentia1}. The map $T$ is a $k$-$\varphi$ contraction, with $k<1=\kappa$. 
 Thus, from Theorem \ref{most}, $T$ has a unique fixed point in $X$.
\end{proof}


\begin{corollary}
Let $(X,d,s)$ be a b-metric space, with $s \geq 1$ and $T:X \to X$ a mapping such that for some $k \in \left(0,1\right)$ and for all $x,y \in X$
\begin{equation*}
d(Tx,Ty) \leq kd(x,y).
\end{equation*}
Then $T$ has a unique fixed point in $X$.
\end{corollary}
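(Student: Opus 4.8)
The plan is to recognize the corollary as a direct specialization of the Banach Contraction Principle (Theorem \ref{most}) to the b-metric setting, so that almost all the work reduces to checking that a complete b-metric space fits the hypotheses of that theorem. By Example \ref{referencemet}(ii), a b-metric space $(X,d,s)$ with $s \geq 1$ is precisely a $0$-upward O-metric space $(X,d,0)$ with $\om(u,v)=s(u+v)$ for all $u,v \geq 0$. First I would record that this $\om$ is nondecreasing in both variables, continuous at $(0,0)$, and satisfies $\om(0,0)=0$, which are exactly the standing hypotheses on $\om$ in Theorem \ref{most}. I would also note that this $\om$ satisfies conditions $(U_1)$ and $(U_2)$ (as already remarked in the excerpt for b-metric spaces: $\om$ is continuous at $(0,0)$, nondecreasing, and $\om(u,0)=su=0 \Leftrightarrow u=0$), so that O-limits are unique and the uniqueness argument in Theorem \ref{most} applies.

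Next I would cast the contraction hypothesis in the $\varphi$-language of Definition \ref{contraction}. Taking $\varphi(t,u)=tu$ for all $t,u \geq 0$ (with $a=0$), one checks that $\varphi$ satisfies conditions $(\varphi_1)$--$(\varphi_3)$: indeed $\varphi(0,u)=\varphi(t,0)=0$, $\varphi$ is increasing in each variable on $(0,\infty)\times(0,\infty)$ and continuous in the second variable at $0$, and $\varphi(r_1,\varphi(r_2,u))=r_1 r_2 u=\varphi(r_1 r_2,u)$. With this choice the contraction condition $d(Tx,Ty)\leq k\,d(x,y)$ reads exactly as $d(Tx,Ty)\leq \varphi(k,d(x,y))$, so $T$ is a $k$-$\varphi$ contraction with $k\in(0,1)$.

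It then remains to verify that $k$ lies in the admissible range $k<\kappa:=\sup C_\varphi$. This is supplied by Lemma \ref{potentx}: for $\om(u,v)=s(u+v)$ and $\varphi(t,u)=tu$ one has $C_\varphi=[0,1)$, hence $\kappa=1$. Since $k\in(0,1)$ gives $k<1=\kappa$, all hypotheses of Theorem \ref{most} are met (taking $(X,d,0)$ to be O-complete), and I would conclude that $T$ has a unique fixed point in $X$.

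I expect the only genuine point requiring attention to be the completeness hypothesis. Theorem \ref{most} is stated for \emph{complete} $a$-upward O-metric spaces, so one must read the standard completeness of $(X,d,s)$ as O-completeness in the sense of the earlier definition; for b-metric spaces these notions agree, since a sequence is Cauchy (respectively convergent) in the usual b-metric sense exactly when $d(x_n,x_m)\to 0$ (respectively $d(x_n,x)\to 0$), which matches O-Cauchyness and O-convergence at $a=0$. Once this identification is made explicit, the proof is entirely a matter of quoting Lemma \ref{potentx} and Theorem \ref{most}, with no further computation.
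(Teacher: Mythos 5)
Your proposal is correct and follows essentially the same route as the paper's own proof: cast $T$ as a $k$-$\varphi$ contraction with $\varphi(t,u)=tu$, invoke Lemma \ref{potentx} to get $\kappa=\sup C_\varphi=1>k$, and conclude via Theorem \ref{most}. Your version is simply more explicit about the hypotheses the paper leaves tacit (verification of $(\varphi_1)$--$(\varphi_3)$, the properties of $\om(u,v)=s(u+v)$, and the identification of b-metric completeness with O-completeness).
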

\begin{proof} The map $T$ is a $k$-$\varphi$ contraction with $\varphi:[0,\infty) \times [0,\infty) \to [0,\infty)$ defined by $\varphi(r,u)=ru$ for all $r,u \geq 0$. From Lemma \ref{potentx}, $\kappa=\sup C_\varphi=1$ hence from Theorem \ref{most}, $T$ has a unique fixed point. 
\end{proof}
\noindent
Next, we consider generalized contractions on O-metric spaces.

\subsection{Generalized contractions}

\begin{definition}
\label{onlyyou}
Let $(X,d_\om,a)$ be an $a$-upward O-metric space, and $\alpha:X \times X \to [0,\infty)$ a map.
Denote by $\Psi$ the family of nondecreasing functions
$\psi: [a,\infty) \to [a,\infty)$ such that 
\begin{equation}\label{complique}
\lim_{n,i \to \infty} \somj{n}{n+i} \psi^{(j)}(\epsilon)=a ~~~ \forall \epsilon>a,
\end{equation}
with the $\om$-series in (\ref{complique}) following some pattern of functions 
$\{h_i\}_{i \in \mathbb{N}}$ (where $h_{i+1} \in \Omega_i$ $\forall i \geq 0$).
\\
A map $T:X \to X$ is said to be an $\alpha$-$\psi$ contractive mapping, where $\psi \in \Psi$, if
\begin{equation}
\alpha(x,y)d_\om(Tx,Ty) \leq \psi(d_\om(x,y)), ~~ \forall x,y \in X.
\end{equation}
\end{definition}
\begin{remark}\label{escapee}\textup{
It should be noted that (\ref{complique}) can written as:
\begin{equation}\label{nahnah}
\forall \epsilon>a ~\lim_{n,i \to \infty} h\left(\psi^{(n)}(\epsilon),\psi^{(n+1)}(\epsilon),\ldots,\psi^{(n+i)}(\epsilon)\right)=a, \mbox{ where } h \in \Omega_i.
\end{equation}
}
\end{remark}
\noindent
The following remark states the relationship between $k$-$\varphi$ contractions (in Definition \ref{contraction}) and $\alpha$-$\psi$ contractions (in Definition \ref{onlyyou}).
\begin{remark}\label{sleep}
\textup{
Now, let $\psi: [a, \infty) \to [a, \infty)$ be defined for $t\geq a$ by 
\begin{equation}
\psi(t)=\varphi(k,t),
\end{equation}
where $\varphi$ satisfies conditions $(\varphi_1)-(\varphi_3)$. The map $\varphi$ so defined is nondecreasing and for each $n \in \mathbb{N}$ and $t \in [a, \infty)$, $\psi^{(n)}(t)=\varphi(k^n,t)$. If $k<\sup C_\varphi$, then $k \in C_\varphi$ hence, from (\ref{converted}), for any $\epsilon\geq a$, $\lim_{n,i \to \infty} h\left(\psi^{(n)}(\epsilon),\psi^{(n+1)}(\epsilon),\ldots,\psi^{(n+i)}(\epsilon)\right)=a$ where $h \in \Omega_i$. Conditions (\ref{nahnah}) and (\ref{complique}) therefore hold, and $\psi \in \Psi$. Thus, if $k \in [0, \sup  C_\varphi)$,  a $k$-$\varphi$ contraction $T:X \to X$ is an $\mathbf{1}$-$\psi$ contractive and continuous map, where $\mathbf{1}$ is the constant function equal to $1$ for every pair of elements of $X$.
}
\end{remark}
\noindent
It is also interesting to explain consider special cases of condition (\ref{complique}).
\begin{remark}\label{endremark}
\textup{
If $(X,d_\om,a)$ is an $a$-upward $\om$-metric space where $\om$ satisfies conditions $(E_1)$ and $(E_2)$ of Theorem \ref{metricequiv} with $\lambda:[a,\infty) \to [0,\infty)$, then the family $\Psi$ consists of nondecreasing functions $\psi:[a,\infty) \to [a,\infty)$ such that 
\begin{equation}
\displaystyle \sum_{n=1}^\infty \lambda(\psi^{(n)}(\epsilon))<\infty ~~\mbox{for all $\epsilon>a$,}
\end{equation}
since the convergence of the series $\displaystyle \sum_{n=1}^\infty \lambda(\psi^{(n)}(\epsilon))$ is equivalent (by the Cauchy criterion) to $\displaystyle\lim_{n,i \to \infty}  \sum_{j=n}^{n+i} \lambda(\psi^{(j)}(\epsilon))=0$ or $\displaystyle\lim_{n,i \to \infty}  \lambda^{-1}\left(\sum_{j=n}^{n+i} \lambda(\psi^{(j)}(\epsilon))\right)=a$, which is condition (\ref{complique}) when $\om$ is defined by \begin{equation}
\om(u,v)=\lambda^{-1}(\lambda(u)+\lambda(v)) ~~\forall u,v \geq a.
\end{equation}
}
\end{remark}
\noindent
Given a function $\alpha:X \times X \to [0, \infty)$, a map $T:X \to X$ is said to be \textbf{$\alpha$-admissible} (see \cite{samet12}) if for any $x, y\in X$, 
\begin{equation}
\alpha(x,y) \geq 1 \implies \alpha(Tx,Ty) \geq 1.
\end{equation}
The following theorem holds for $\alpha$-admissible, $\alpha$-$\psi$ contractive maps, and easily generalizes the Banach contraction principle.

\begin{theorem}\label{escape}
Let $(X,d_\om,a)$ be a complete O-metric space with $\om$ nondecreasing in both variables, continuous at $(a,a)$, and $\om(a,a)=a$. Let $T: X \to X$ be an $\alpha$-$\psi$ contractive mapping, with $\psi \in \Psi$ and $\alpha:X \times X \to [0,\infty)$ a given map, such that $T$ is $\alpha$-admissible, and $\alpha(x_0,Tx_0) \geq 1$ for some $x_0 \in X$. If either:
\begin{itemize}
\item[(i)] $T$ is sequentially continuous or
\item[(ii)] for any sequence $\{x_n\} \subset X$ such that $\alpha(x_n,x_{n+1}) \geq 1$ for all $n \geq 0$, $x_n \xrightarrow{\text{O}} x$ implies that $\alpha(x_n,x) \geq 1$ for all $n$ sufficiently large,
\end{itemize}
then $T$ has a fixed point. 
\end{theorem}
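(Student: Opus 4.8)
The plan is to run the Picard-iteration argument of Samet--Vetro--Vetro in the O-metric setting, replacing the usual summation of a comparison series by the $\om$-series condition (\ref{complique}) defining $\Psi$ together with the polygon $\om$-inequalities of Lemma \ref{polygon}. Fix $x_0 \in X$ with $\alpha(x_0, Tx_0) \geq 1$ and set $x_{n+1} = Tx_n$. If $x_{n_0} = x_{n_0+1}$ for some $n_0$ we are done, since then $Tx_{n_0} = x_{n_0}$; so assume $x_n \neq x_{n+1}$, i.e.\ $d_\om(x_n, x_{n+1}) > a$, for every $n$. First I would propagate admissibility along the orbit: from $\alpha(x_0, x_1) = \alpha(x_0, Tx_0) \geq 1$ and $\alpha$-admissibility, an easy induction yields $\alpha(x_n, x_{n+1}) \geq 1$ for all $n$. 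Feeding this into the contractive inequality of Definition \ref{onlyyou} and using $\alpha \geq 1$ with $d_\om \geq 0$ gives $d_\om(x_{n+1}, x_{n+2}) \leq \psi(d_\om(x_n, x_{n+1}))$; since $\psi$ is nondecreasing, a second induction produces the key estimate $d_\om(x_n, x_{n+1}) \leq \psi^{(n)}(\epsilon)$ for all $n$, where $\epsilon := d_\om(x_0, x_1) > a$.

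Next I would show $\{x_n\}$ is Cauchy. For $m = n + i + 1 > n$, Lemma \ref{polygon} gives, with the particular $h \in \Omega_i$ coming from the pattern of functions attached to $\psi \in \Psi$,
\begin{equation*}
a \leq d_\om(x_n, x_m) \leq h\left(d_\om(x_n, x_{n+1}), \ldots, d_\om(x_{n+i}, x_{n+i+1})\right).
\end{equation*}
Because $\om$ is nondecreasing in both variables, the composite $h$ is nondecreasing in each argument, so the previous estimate lets me replace each gap by $\psi^{(n+j)}(\epsilon)$ and obtain $d_\om(x_n, x_m) \leq \somj{n}{n+i} \psi^{(j)}(\epsilon)$. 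By the defining property (\ref{complique}) of $\Psi$ this upper bound tends to $a$; the finitely many small values of $i$ are handled by the continuity of $\om$ at $(a,a)$, which, with $\om(a,a)=a$, makes each fixed-length composite $h$ continuous at $(a,\ldots,a)$ with value $a$. Hence $\lim_{n,m \to \infty} d_\om(x_n, x_m) = a$, and completeness delivers a point $x^* \in X$ with $x_n \xrightarrow{\text{O}} x^*$. I note that the framework is implicitly $a$-upward, since $\psi$ is only defined on $[a,\infty)$ so every $d_\om(x,y) \geq a$; I will use this to turn the one-sided bounds below into equalities.

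It remains to verify $Tx^* = x^*$, and this is where the absence of a uniqueness-of-limits hypothesis makes things delicate. Under (i), sequential continuity gives $x_{n+1} = Tx_n \xrightarrow{\text{O}} Tx^*$, and combining $d_\om(x^*, Tx^*) \leq \om(d_\om(x^*, x_{n+1}), d_\om(x_{n+1}, Tx^*))$ with the continuity of $\om$ at $(a,a)$ forces $d_\om(x^*, Tx^*) \leq a$, which the upward property upgrades to $d_\om(x^*, Tx^*) = a$, i.e.\ $Tx^* = x^*$. Under (ii), since $\alpha(x_n, x_{n+1}) \geq 1$ for all $n$ and $x_n \xrightarrow{\text{O}} x^*$, hypothesis (ii) supplies $\alpha(x_n, x^*) \geq 1$ for all large $n$, whence $d_\om(x_{n+1}, Tx^*) \leq \alpha(x_n, x^*) d_\om(Tx_n, Tx^*) \leq \psi(d_\om(x_n, x^*))$. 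Here the decisive small observation is that $\lim_{t \to a^+} \psi(t) = a$: membership of $\psi$ in $\Psi$ forces $\psi^{(n)}(\delta) \to a$ for every $\delta > a$ (take the diagonal $i=n$ in (\ref{complique}), using that in an upward space each $\om$-composition dominates its leading term $\psi^{(n)}(\delta)$), and were $\psi(t_0) \geq t_0$ for some $t_0 > a$ the iterates $\psi^{(n)}(t_0)$ would stay $\geq t_0 > a$, a contradiction; thus $\psi(t) < t$ for $t > a$ and $\inf_{t>a} \psi(t) = a$. Since $d_\om(x_n, x^*) \to a$ from above, it follows that $d_\om(x_{n+1}, Tx^*) \to a$, so $x_{n+1} \xrightarrow{\text{O}} Tx^*$, and the same triangle-inequality-plus-continuity argument as in (i) yields $Tx^* = x^*$. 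I expect the main obstacle to be precisely this last step: without uniqueness of O-limits one cannot simply identify the two limits $x^*$ and $Tx^*$, so the argument must route through the triangle $\om$-inequality, the continuity of $\om$ at $(a,a)$, and the upward property to pin down $d_\om(x^*, Tx^*) = a$ directly.
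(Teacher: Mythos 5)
You follow the paper's own route step for step: the Picard orbit $x_{n+1}=Tx_n$, the induction giving $\alpha(x_n,x_{n+1})\geq 1$ and $d_\om(x_n,x_{n+1})\leq \psi^{(n)}(d_\om(x_0,x_1))$, Cauchyness via Lemma \ref{polygon} combined with (\ref{complique}), completeness, then the two cases. In case (i) you actually improve on the paper: its proof writes $x^*=\lim_{n\to\infty}x_{n+1}=\lim_{n\to\infty}Tx_n=Tx^*$, which tacitly uses uniqueness of O-limits, a property not among the hypotheses of Theorem \ref{escape} (conditions $(U_1)$--$(U_2)$ are not assumed, and Example \ref{olala} shows O-limits need not be unique). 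Your substitute --- bounding $d_\om(x^*,Tx^*)$ by $\om(d_\om(x^*,x_{n+1}),d_\om(x_{n+1},Tx^*))$ and invoking continuity of $\om$ at $(a,a)$, $\om(a,a)=a$, and upwardness --- is the correct repair.

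The unsupported link in your chain is the claim that ``in an upward space each $\om$-composition dominates its leading term,'' from which you deduce $\psi^{(n)}(\delta)\to a$, then $\psi(t)<t$ for $t>a$, then $\lim_{t\to a^+}\psi(t)=a$; you use this twice, for the small-$i$ part of the Cauchy step and in case (ii). Domination would require $\om(u,a)\geq u$ for \emph{every} $u\geq a$, but the axioms yield this only for $u$ in the image of $d_\om$ (put $y=z$ in the triangle $\om$-inequality); for other values --- and $\psi^{(n)}(\delta)$ is generally such a value --- nothing in ``nondecreasing, $\om(a,a)=a$, continuous at $(a,a)$'' prevents $\om(u,v)<u$. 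Concretely, take $a=0$, $\om(u,v)=g(\max\{u,v\})$ with $g(t)=t^2$ for $t\leq 1$ and $g(t)=t$ for $t\geq 1$ (a legitimate operation, e.g.\ for the two-point space with distance $1$), and $\psi(0)=0$, $\psi\equiv\tfrac12$ on $(0,\infty)$: under the AISO pattern the composition of $i+1$ copies of $\tfrac12$ equals $(\tfrac12)^{2^{\floor*{\log_2(i+1)}}}\to 0$, so $\psi\in\Psi$ under the double-limit reading of (\ref{complique}), yet $\psi(t)\not\to 0$ as $t\to 0^+$; your lemma is false in that generality. In fairness, the paper is silent at exactly the same two places (its choice of $N$ with $\somj{N}{\infty}\psi^{(j)}(d_\om(x_0,x_1))<\epsilon$ is assumed to control all shorter, later-starting compositions, and case (ii) is dispatched with ``by taking the limit''), so your write-up is no less rigorous than the published proof --- but your patch does not close the hole. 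The clean fix is to read (or restate) (\ref{complique}) uniformly in $i\geq 0$, i.e.\ $\forall \eta>0$ $\exists N$ such that $\somj{n}{n+i}\psi^{(j)}(\epsilon)<a+\eta$ for all $n\geq N$ and \emph{all} $i\geq 0$: then $i=0$ gives $\psi^{(n)}(\epsilon)\to a$ outright, no domination is needed, and both your Cauchy step and your case (ii) go through. Alternatively, under the weak reading the Cauchy step (though not case (ii)) can still be saved by routing short gaps through a distant point, $d_\om(x_n,x_m)\leq \om(d_\om(x_n,x_K),d_\om(x_K,x_m))$ with $K\gg m$, so that both arguments are long compositions and continuity of $\om$ at $(a,a)$ finishes.
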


\begin{proof}
Defining the sequence $\{x_n\}_{n \geq 0}$ by $x_{n+1}=Tx_n$ for all $n \geq 0$, with $x_0$ such that $\alpha(x_0,Tx_0) \geq 1$. By induction and $\alpha$-admissibility of $T$, $\alpha(x_n,x_{n+1}) \geq 1$ for all $n \geq 0$. Since $T$ is $\alpha-\psi$ contractive, then  for all $n \in \mathbb{N}$,
$$d_\om(x_n,x_{n+1}) = d_\om(Tx_{n-1},Tx_n) \leq \alpha(x_{n-1},x_n)d_\om(Tx_{n-1},Tx_n) \leq \psi(d_\om(x_{n-1},x_n)).$$ By induction, $d_\om(x_n,x_{n+1}) \leq \psi^{(n)}(d_\om(x_0,x_1))$, for all $n \in \mathbb{N}$. \\
If $x_0 =x_1$, then $x_0=Tx_0$ is a fixed point of $T$. Suppose now that $x_0 \neq x_1$. \\
Since $d_\om(x_0,x_1) >a$, from (\ref{complique}), $\lim_{n,i \to \infty} \somj{n}{n+i} \psi^{(j)}(d_\om(x_0,x_1))=a$. Fix $\epsilon>a$ and let $N \in \mathbb{N}$ be such that $ \somj{N}{\infty} \psi^{(j)}(d(x_0,x_1)) <\epsilon$. Let $n,m \in \mathbb{N}$ with $m>n>N$. 
From the polygon $\om$-inequality, $d_\om(x_n,x_m) \leq \somj{n}{m-1} \psi^{(j)}(d_\om(x_0,x_1)) <\epsilon$. Thus $\{x_n\}$ is a Cauchy sequence; it converges to some $x^*$ from the completeness from $X$.
\\
If $T$ is sequentially continuous, then $x^*=\lim_{n \to \infty}x_{n+1}=\lim_{n \to \infty} Tx_n = Tx^*$ is a fixed point of $T$. Suppose now that (ii) holds. Then  $\alpha(x_n,x^*) \geq 1$ for $n$ sufficiently large, and from the triangle $\om$-inequality,
\begin{equation*}
\begin{array}{lcl}
d_\om(Tx^*,x^*) &\leq& \om(d_\om(Tx^*,Tx_n),d_\om(Tx_n,x^*))  
\\
&\leq& \om(\alpha(x_n,x^*)d_\om(Tx_n,Tx^*),d_\om(x_{n+1},x^*))
\\
\end{array}
\end{equation*}
By taking the limit, $d_\om(Tx^*,x^*)=a$ hence $x^*=Tx^*$ is a fixed point of $T$.
\end{proof}
\noindent
By virtue of Remark \ref{escapee}, Theorem \ref{most} is a corollary of Theorem \ref{escape}, since any $k$-$\varphi$ contraction is a continuous, $\mathbf{1}$-$\varphi(k,\cdot)$ contraction (see Remark \ref{sleep}). Under conditions $(E_1)$ and $(E_2)$, we obtain the following result:

\begin{corollary}\label{sleep1}
Let $(X,d_\om,a)$ be a complete O-metric space, with $\om$ satisfying conditions $(E_1)$ and $(E_2)$ of Theorem \ref{metricequiv} with $\lambda:[a,\infty) \to [0,\infty)$. Let $\alpha:X \times X \to [0,\infty)$  and $T: X \to X$ be maps such that $T$ is $\alpha$-admissible, $\alpha(x_0,Tx_0) \geq 1$ for some $x_0 \in X$, and $\alpha(x,y)d_\om(Tx,Ty) \leq \psi(d_\om(x,y))$ for all $x,y \in X$, where $\psi:[a,\infty) \to [a,\infty)$ is an nondecreasing function such that $\displaystyle \sum_{n=1}^\infty \lambda(\varphi^{(n)}(\epsilon))<\infty$ for all $\epsilon>a$.
If either:
\begin{itemize}
\item[(i)] $T$ is continuous or
\item[(ii)] for any sequence $\{x_n\} \subset X$ such that $\alpha(x_n,x_{n+1}) \geq 1$ for all $n \geq 0$, $x_n \to x$ implies that $\alpha(x_n,x) \geq 1$ for all $n$ sufficiently large,
\end{itemize}
then $T$ has a fixed point.
\end{corollary}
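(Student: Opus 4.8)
The plan is to obtain this corollary directly from Theorem \ref{escape}, the only work being to recognize that the hypotheses imposed on $\psi$ are exactly what guarantees $\psi \in \Psi$ in the present setting. First I would record that conditions $(E_1)$ and $(E_2)$ force $\om(u,v)=\lambda^{-1}(\lambda(u)+\lambda(v))$ for all $u,v\ge a$, from which the structural assumptions of Theorem \ref{escape} on $\om$ follow routinely. Since $\lambda$ and $\lambda^{-1}$ are increasing, $\om$ is nondecreasing in each variable; moreover $\om(a,a)=\lambda^{-1}(2\lambda(a))=\lambda^{-1}(0)=a$. For $\om$ to be everywhere defined, $\lambda(u)+\lambda(v)$ must lie in $\mathrm{Im}(\lambda)$ for all $u,v\ge a$, so $\mathrm{Im}(\lambda)$ is an interval containing $0$; as an increasing bijection of $[a,\infty)$ onto an interval, $\lambda$ (and hence $\lambda^{-1}$) is continuous, whence $\om$ is continuous, in particular at $(a,a)$.

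Next I would verify that $\psi\in\Psi$. By Remark \ref{endremark}, under $(E_1)$ and $(E_2)$ the associativity of $\om$ collapses the pattern-dependent $\om$-series appearing in (\ref{complique}) to $\lambda^{-1}\big(\sum_{j=n}^{n+i}\lambda(\psi^{(j)}(\epsilon))\big)$, so condition (\ref{complique}) is equivalent, via the Cauchy criterion, to the convergence of $\sum_{n\ge 1}\lambda(\psi^{(n)}(\epsilon))$ for every $\epsilon>a$. This is precisely the hypothesis imposed on $\psi$ in the statement (reading $\psi^{(n)}$ for the $\varphi^{(n)}$ printed there). Hence $\psi$ is a nondecreasing self-map of $[a,\infty)$ lying in $\Psi$, and the contractivity assumption $\alpha(x,y)d_\om(Tx,Ty)\le\psi(d_\om(x,y))$ makes $T$ an $\alpha$-$\psi$ contractive mapping.

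With $T$ additionally $\alpha$-admissible and $\alpha(x_0,Tx_0)\ge 1$ for some $x_0\in X$, every hypothesis of Theorem \ref{escape} is in place once I match the two continuity alternatives. Since $(E_1)$ and $(E_2)$ make the O-metric topology coincide with the metric topology of $(X,\lambda\circ d_\om)$ by Theorem \ref{metricequiv}, continuity and sequential continuity of $T$ agree, so alternative (i) here coincides with alternative (i) of Theorem \ref{escape}, while alternative (ii) transcribes verbatim. Applying Theorem \ref{escape} then produces a fixed point of $T$.

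I expect the only genuine subtlety, and hence the ``hard part'', to be the reduction carried out in the second paragraph: justifying that the pattern-free series condition really captures membership in $\Psi$. This rests on the associativity of $\om$ under $(E_1)$ and $(E_2)$ together with the continuity of $\lambda^{-1}$ at $0$ needed to pass between $\lim \lambda^{-1}(\sum\cdots)=a$ and $\lim\sum\cdots=0$; once this is secured, everything else is bookkeeping inherited from Theorem \ref{escape}.
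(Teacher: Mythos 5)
Your proposal is correct and follows exactly the paper's route: the paper's entire proof is the observation (Remark \ref{endremark}) that under $(E_1)$ and $(E_2)$ the summability hypothesis on $\psi$ is precisely membership in $\Psi$, after which Theorem \ref{escape} applies verbatim, and your second and third paragraphs are just this reduction spelled out in detail. One caution: your first-paragraph justification that $\om$ is continuous at $(a,a)$ is not sound as written---a subset of $[0,\infty)$ containing $0$ and closed under addition need not be an interval (e.g.\ $\{0\}\cup(1,\infty)$, the image of $\lambda(a)=0$, $\lambda(t)=t-a+1$ for $t>a$), so continuity of $\lambda$ and hence of $\om$ does not follow---but this concerns a hypothesis of Theorem \ref{escape} that the paper's own one-line proof invokes without any verification, so it does not separate your argument from the paper's.
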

\begin{proof}
Given Remark \ref{endremark}, the conditions of Theorem \ref{escape} are satisfied hence $T$ has a fixed point.
\end{proof}
\noindent 
If $\lambda$ in Corollary \ref{sleep1} is such that $\lambda(t)=t$ for all $t \geq 0$, the main results in \cite{samet12} are obtained. 
\\
\\
The following corollary holds in the context of b-metric spaces:

\begin{corollary}\label{escapade}
Let $(X,d,s)$ be a complete b-metric space, $\alpha:X \times X \to [0,\infty)$ a mapping, and $T: X \to X$ a map that is $\alpha$-admissible and such that $\alpha(x_0,Tx_0) \geq 1$ for some $x_0 \in X$, and for all $x,y \in X$,
\begin{equation}\label{olaleru}
\alpha d(Tx,Ty) \leq \psi(d(x,y)),
\end{equation}
for a non-decreasing function $\psi:[0,\infty) \to [0,\infty)$ satisfying:
\begin{equation}\label{sarah}
\lim_{n \to \infty} \frac{1}{s^{n-1}} \sum_{j=n}^\infty s^j \psi^{(j)}(\epsilon)=0 ~~\forall \epsilon>0.
\end{equation}
Suppose any of the followig holds:
\begin{itemize}
\item[(i)] $T$ is sequentially continuous or
\item[(ii)] for any sequence $\{x_n\} \subset X$ such that $\alpha(x_n,x_{n+1}) \geq 1$ for all $n \geq 0$, $x_n \to x$ implies that $\alpha(x_n,x) \geq 1$ for all $n$ sufficiently large.
\end{itemize}
Then $T$ has a fixed point. 
\end{corollary}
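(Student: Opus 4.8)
The plan is to recognize $(X,d,s)$ as a special O-metric space and invoke Theorem \ref{escape}. By Example \ref{referencemet}(ii), a b-metric space $(X,d,s)$ with $s\ge1$ is the $0$-upward O-metric space $(X,d_\om,0)$ where $\om(u,v)=s(u+v)$ and $d=d_\om$. This $\om$ is nondecreasing in each variable, continuous at $(0,0)$, and satisfies $\om(0,0)=0$, so the standing assumptions on $\om$ in Theorem \ref{escape} hold with $a=0$. The contractive hypothesis (\ref{olaleru}) reads $\alpha(x,y)d_\om(Tx,Ty)\le\psi(d_\om(x,y))$, and $T$ is $\alpha$-admissible with $\alpha(x_0,Tx_0)\ge 1$; condition (i) (resp. (ii)) of the corollary is exactly condition (i) (resp. (ii)) of Theorem \ref{escape}. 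Since b-metric spaces satisfy $(U_1)$ and $(U_2)$, O-convergent sequences are Cauchy, and completeness of the b-metric space supplies the convergence of Cauchy sequences that the proof of Theorem \ref{escape} actually uses. Hence the only thing left to establish is that the given $\psi$ lies in $\Psi$.

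The hard part is exactly this membership $\psi\in\Psi$: I must exhibit \emph{some} pattern of functions for which $\lim_{n,i\to\infty}\somj{n}{n+i}\psi^{(j)}(\epsilon)=0$ for every $\epsilon>0$. I would take the \textbf{LIFO} pattern $\{z_n\}$ of (\ref{recursion}) (the pattern of integers $\{1\}$), whose closed form under $\om(u,v)=s(u+v)$ is furnished by (\ref{zn}). Writing $t_0,\dots,t_i$ for $\psi^{(n)}(\epsilon),\dots,\psi^{(n+i)}(\epsilon)$, applying (\ref{zn}) to these $i+1$ terms, and reindexing the resulting sum by $j=n+k-1$ yields
\begin{equation*}
\somj{n}{n+i}\psi^{(j)}(\epsilon)=\frac{1}{s^{n-1}}\sum_{j=n}^{n+i-1}s^{j}\psi^{(j)}(\epsilon)+s^{i}\psi^{(n+i)}(\epsilon).
\end{equation*}
The whole point of choosing LIFO is that this is precisely the expression controlled by (\ref{sarah}).

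It then remains to send both terms to $0$. The first is at most $\frac{1}{s^{n-1}}\sum_{j=n}^{\infty}s^{j}\psi^{(j)}(\epsilon)$, which tends to $0$ as $n\to\infty$ by (\ref{sarah}) (note that (\ref{sarah}) forces the tail $\sum_{j\ge n}s^{j}\psi^{(j)}(\epsilon)$ to be finite). For the second, the rewriting
\begin{equation*}
s^{i}\psi^{(n+i)}(\epsilon)=\frac{1}{s}\cdot\frac{1}{s^{n-1}}\,s^{n+i}\psi^{(n+i)}(\epsilon)\le\frac{1}{s}\cdot\frac{1}{s^{n-1}}\sum_{j=n}^{\infty}s^{j}\psi^{(j)}(\epsilon)
\end{equation*}
shows it too tends to $0$ as $n\to\infty$, uniformly in $i$. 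Hence $\somj{n}{n+i}\psi^{(j)}(\epsilon)\to 0$ as $n,i\to\infty$, so $\psi\in\Psi$, and Theorem \ref{escape} delivers a fixed point of $T$. I expect the decisive step to be the middle paragraph, namely choosing the LIFO pattern and matching its closed form to the hypothesis (\ref{sarah}); once the displayed identity is in hand, the two estimates above are immediate.
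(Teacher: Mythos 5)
Your proposal is correct and takes essentially the same route as the paper: both identify $(X,d,s)$ with the O-metric space $(X,d_\om,0)$, $\om(u,v)=s(u+v)$, establish $\psi\in\Psi$ by evaluating the $\om$-series along the LIFO pattern $\{z_n\}$ via the closed form (\ref{zn}), bound the result by $\frac{1}{s^{n-1}}\sum_{j=n}^{\infty}s^{j}\psi^{(j)}(\epsilon)$ so that (\ref{sarah}) applies, and then invoke Theorem \ref{escape}. The only cosmetic difference is bookkeeping: the paper absorbs the trailing term $s^{i}\psi^{(n+i)}(\epsilon)$ by multiplying the whole expression by $s\geq 1$, whereas you bound that term separately by $\frac{1}{s}\cdot\frac{1}{s^{n-1}}\sum_{j=n}^{\infty}s^{j}\psi^{(j)}(\epsilon)$.
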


\begin{proof}
A complete b-metric space $(X,d,s)$ is a complete $\om$-metric space $(X,d_\om,a)$, with $a=0$, $\om$ such that $\om(u,v)=s(u+v)$ for all $u, v \in [0,\infty)$, and $d_\om$ such that $d_\om(x,y)=d(x,y)$ for all $x, y \in X$. The function $\om$ so defined is non-decreasing in both variables, continuous at $(0,0)$, and such that $\om(0,0)=0$.
\\
Let $\varphi:[0,\infty) \to [0,\infty)$ be the nondecreasing function such that (\ref{sarah}) holds. For $n,i \in \mathbb{N}$ and $\epsilon>0$, the $\om$-series $\somj{n}{n+i} \varphi^{(j)} (\epsilon)$ following the pattern of integers $\{1\}_{n \in \mathbb{N}}$ can be computed using equation (\ref{zn}):
\begin{equation}
\begin{array}{lcl}
\somj{n}{n+i} \psi^{(j)} (\epsilon) &=& \somj{0}{i} \psi^{(n+j)} (\epsilon)
\\
&=& \displaystyle \sum_{j=1}^i  s^j \psi^{(n+j-1)}(\epsilon) + s^i \psi^{(n+i)}(\epsilon)
\\
&\leq& \displaystyle s\left( \sum_{j=1}^{i}  s^{j-1} \psi^{(n+j-1)}(\epsilon) + s^i \psi^{(n+i)}(\epsilon) \right)
\\
&=& \displaystyle s \sum_{j=n}^{n+i}  s^{j-n} \psi^{(j)}(\epsilon) 
\\
&=& \displaystyle \frac{1}{s^{n-1}} \sum_{j=n}^{n+i} s^j \psi^{(j)}(\epsilon)
\\
&\leq & \displaystyle \frac{1}{s^{n-1}} \sum_{j=n}^\infty s^j \psi^{(j)}(\epsilon)
\end{array}
\end{equation} 
Given equation (\ref{sarah}), as $n,i \to \infty$, $\displaystyle\lim_{n,i \to \infty} \somj{n}{n+i} \psi^{(j)} (\epsilon) \leq \lim_{n \to \infty} \frac{1}{s^{n-1}} \sum_{j=n}^\infty s^j \psi^{(j)}(\epsilon) =0$. Thus $\displaystyle\lim_{n,i \to \infty} \somj{n}{n+i} \psi^{(j)} (\epsilon)=0$ for all $\epsilon>0$. Equation (\ref{complique}) is satisfied hence $\psi \in \Psi$, and from (\ref{olaleru}), $T:X \to X$ is an $\alpha$-$\psi$ contractive mapping in the sense of Definition \ref{onlyyou}.  All the conditions of Theorem \ref{escape} are therefore satisfied, hence $T$ has a fixed point.
\end{proof} 
\noindent
Corollary \ref{escapade} still holds when condition (\ref{sarah}) is replaced with:
\begin{equation}\label{sarae}
\lim_{n,i \to \infty} s^{\ceil*{\log_2(i+1)}} \sum_{j=n}^{n+i} \psi^{(j)}(\epsilon)=0 ~~\forall \epsilon>0.
\end{equation}
Indeed, if $\epsilon>0$, and $n, i \in \mathbb{N}$, then from (\ref{wn}), the $\om$-series $\somj{n}{n+i} \psi^{(j)}(\epsilon)$ following the pattern of integers $\{2^{\ceil*{\log_2 n}-1} \}_{n \in \mathbb{N}}$ is such that:
\begin{equation}
\somj{n}{n+i} \psi^{(j)}(\epsilon) \leq s^{\ceil*{\log_2(i+1)}}  \sum_{j=n}^{n+i} \psi^{(j)}(\epsilon).
\end{equation}


\section*{Declarations}
\subsection*{Availability of data and materials} 
Not applicable.

\subsection*{Competing interests}
The authors declare that they have no competing interests.


\subsection*{Funding} 
None.

\subsection*{Authors' Contributions} 
All authors contributed meaningfully to this research work, and read and approved its final manuscript.

\subsection*{Acknowledgments}
None.

\end{document}